\def\RR{{\bf R}}
\def\ZZ{{\bf Z}}
\numberwithin{equation}{section}
\newcommand{\rank}{\mathop{\rm rank} }
\newcommand{\diam}{\mathop{\rm diam} }
\newcommand{\argmin}{\mathop{\rm argmin} }
\newtheorem{Thm}{Theorem}[section]
\newtheorem{Prop}[Thm]{Proposition}
\newtheorem{Lem}[Thm]{Lemma}
\newtheorem{Cor}[Thm]{Corollary}
\theoremstyle{definition}
\newtheorem*{Clm}{Claim}
\newtheorem{Rem}[Thm]{Remark}
\title{Maximum vanishing subspace problem, CAT(0)-space relaxation,
	\\ and block-triangularization of partitioned matrix
	\footnote{An extended abstract of this paper appears in the proceeding of 10th Japanese-Hungarian Symposium
		on Discrete Mathematics and Its Applications}}
\author{Masaki HAMADA and Hiroshi HIRAI \\
Department of Mathematical Informatics, \\
Graduate School of Information Science and Technology,   \\
The University of Tokyo, Tokyo, 113-8656, Japan.\\
\texttt{\normalsize masaki$\_$hamada@mist.i.u-tokyo.ac.jp}\\
\texttt{\normalsize hirai@mist.i.u-tokyo.ac.jp}
}
\begin{document}
\maketitle
\begin{abstract}
In this paper, we address the following 
algebraic generalization of the bipartite stable set problem.
We are given a block-structured matrix (partitioned matrix) $A = (A_{\alpha \beta})$, where 
$A_{\alpha \beta}$ is an $m_{\alpha}$ by $n_{\beta}$ matrix 
over field ${\bf F}$ for $\alpha=1,2,\ldots,\mu$ and $\beta = 1,2,\ldots,\nu$.  
The maximum vanishing subspace problem (MVSP) 
is to maximize $\sum_{\alpha} \dim X_{\alpha} + \sum_{\beta} \dim Y_{\beta}$ 
over vector subspaces 
$X_{\alpha} \subseteq {\bf F}^{m_{\alpha}}$ for  $\alpha=1,2,\ldots,\mu$ and $Y_{\beta} \subseteq {\bf F}^{n_{\beta}}$ for $\beta = 1,2,\ldots,\nu$
such that each
$A_{\alpha \beta}$  vanishes on $X_{\alpha} \times Y_{\beta}$ when 
$A_{\alpha \beta}$ is viewed as a bilinear form ${\bf F}^{m_{\alpha}} \times {\bf F}^{n_{\beta}} \to {\bf F}$.
This problem arises from a study 
of a canonical block-triangular form of $A$ 
by Ito, Iwata, and Murota~(1994), and is closely related 
to the noncommutative rank of a matrix with indeterminates. 

We prove that a weighted version (WMVP) of MVSP can be solved in psuedo polynomial time, 
provided arithmetic operations on ${\bf F}$ can be done in constant time. 
Our proof is a novel combination of submodular optimization on modular lattice
and convex optimization on CAT(0)-space.
We present implications of 
this result on block-triangularization of partitioned matrix.
\end{abstract}

Keywords: CAT(0)-space, proximal point algorithm, Dulmage-Mendelsohn decomposition, partitioned matrix, submodular function, modular lattice. 


\section{Introduction}
The maximum stable set problem in bipartite graphs is one of the fundamental 
and well-solved combinatorial optimization problems.
We address in this paper the following algebraic generalization 
of the bipartite stable set problem.
We are given a matrix $A$ partitioned into submatrices as
\[
A = \left(
\begin{array}{ccccc}
A_{11} & A_{12} &\cdots & A_{1\nu} \\
A_{21} & A_{22} &\cdots & A_{2\nu} \\
\vdots & \vdots & \ddots & \vdots \\
A_{\mu1}&A_{\mu2} &\cdots & A_{\mu \nu}
\end{array}\right),
\]
where $A_{\alpha \beta}$ is an $m_\alpha \times n_\beta$ 
matrix over field ${\bf F}$ for $\alpha=1,2,\ldots,\mu$ and
$\beta=1,2,\ldots,\nu$. 
Such a matrix is called a {\em partitioned matrix of 
	type $(m_1,m_2, \ldots, m_\mu; n_1,n_2,\ldots,n_\nu)$}. 
The {\em maximum vanishing subspace problem (MVSP)} is to maximize
\begin{equation}\label{eqn:objective}
\sum_{\alpha=1}^\mu \dim X_{\alpha} + \sum_{\beta=1}^\nu \dim Y_{\beta} 
\end{equation}
over vector subspaces $X_\alpha \subseteq {\bf F}^{m_{\alpha}}$ for $\alpha =1,2,\ldots,\mu$ and
$Y_\beta \subseteq {\bf  F}^{n_{\beta}}$ for $\beta =1,2,\ldots,\nu$ satisfying
\begin{equation}\label{eqn:vanishing}
A_{\alpha \beta} (X_{\alpha}, Y_{\beta}) = \{0\} \quad 
(1 \leq \alpha \leq \mu, 1 \leq \beta \leq \nu),
\end{equation}
where each submatrix $A_{\alpha \beta}$ is regarded 
as a bilinear form ${\bf F}^{m_{\alpha}} \times {\bf F}^{n_{\beta}} \to {\bf F}$ by
\begin{equation*}
(u,v) \mapsto u^{\top} A_{\alpha \beta} v.
\end{equation*}
A tuple $(X_{1}, X_2,\ldots,X_{\mu},Y_1,Y_2,\ldots, Y_{\nu})$ satisfying (\ref{eqn:vanishing}) 
is called a {\em vanishing subspace}, 
and is called {\em maximum}  
if it has the maximum dimension, where the {\em dimension} is defined as (\ref{eqn:objective}).  

MVSP generalizes 
the maximum stable set problem on bipartite graphs.
Indeed, consider the case $m_{\alpha} = n_{\beta} = 1$ for each $\alpha, \beta$. Namely each submatrix is a scalar.
 Then each vector subspace is $\{0\}$ or ${\bf F}$, 
 and its dimension is $0$ or $1$. 
 The condition~(\ref{eqn:vanishing}) 
 says that  one of $X_{\alpha}$ and $Y_{\beta}$ is $\{0\}$ if $A_{\alpha \beta}$ is a nonzero scalar.
Thus MVSP is the maximum stable set problem on 
a bipartite graph on vertices 
$a_1,a_2,\ldots,a_\mu, b_{1},b_2,\ldots b_\nu$ such that edge 
$a_{\alpha} b_{\beta}$ is given if and only if $A_{\alpha \beta}$ is a nonzero scalar.  

A linear algebraic interpretation of MVSP is explained as follows.
Consider a transformation of $A$ of the form
\begin{equation}\label{eqn:trans}
\left(
\begin{array}{cccc}
E_1^{\top} & O  & \cdots & O  \\
O & E_2^{\top} & \ddots & \vdots \\
\vdots & \ddots & \ddots &   O\\
O & \cdots & O& E_{\mu}^{\top}
\end{array}
\right)
\left(
\begin{array}{ccccc}
A_{11} & A_{12} &\cdots & A_{1\nu} \\
A_{21} & A_{22} &\cdots & A_{2\nu} \\
\vdots & \vdots & \ddots & \vdots \\
A_{\mu1}&A_{\mu2} &\cdots & A_{\mu \nu}
\end{array}\right)
\left(
\begin{array}{cccc}
F_1 & O  & \cdots & O  \\
O & F_2 & \ddots & \vdots \\
\vdots & \ddots & \ddots &   O\\
O & \cdots & O& F_{\nu}
\end{array}
\right),
\end{equation}
where $E_\alpha$ is a nonsingular $m_\alpha \times m_\alpha$ matrix for $\alpha = 1,2,\ldots,\mu$ and 
$F_\beta$ is a nonsingular $n_\beta \times n_\beta$ matrix 
for $\beta = 1,2,\ldots, \nu$.
If the resulting matrix contains 
a zero submatrix of $c$ rows and $d$ columns,
then from $E_{\alpha}$ and $F_{\beta}$
we obtain a vanishing subspace of dimension $c+d$.
Conversely, from a vanishing subspace 
of dimension $b$,  
we can find a transformation of form (\ref{eqn:trans})
such that the resulting matrix contains 
a zero submatrix of $c$ rows and $d$ columns with $c+d = b$. 
Thus MVSP is nothing but the problem of finding 
a transformation (\ref{eqn:trans}) of $A$ such that 
the resulting matrix has the largest zero submatrix.

Ito, Iwata, and Murota~\cite{ItoIwataMurota94} 
studied a canonical block-triangular form under transformation~(\ref{eqn:trans}), 
which generalizes 
the classical {\em Dulmage-Mendelsohn decomposition}~\cite{DulmageMendelsohn58}; 
see also \cite{LovaszPlummer}.
They formulated an equivalent problem of MVSP, though
MVSP was explicitly introduced by a recent paper~\cite{HH16DM}.
For several basic special cases~\cite{DulmageMendelsohn58,HH16DM,MurotaIriNakamura87}, 
MVSP can be solved in polynomial time via Gaussian elimination, bipartite matching, and matroid intersection algorithm,  
and a canonical block-triangular form is also obtained accordingly.
These works are in a cross road of numerical computation 
and combinatorial optimization. 
Ito, Iwata, and Murota~\cite[p.1252]{ItoIwataMurota94} 
raised an open problem of solving  
(an equivalent problem of) MVSP and obtaining a canonical block-triangular form 
in polynomial time.
%

The contribution of this paper is about this problem.  
We consider a natural weighted generalization of MVSP.
We are further given nonnegative weights $C_{\alpha},D_{\beta}$ 
for $1 \leq \alpha \leq \mu$ and $1 \leq \beta \leq \nu$.
The {\em weighted maximum vanishing subspace problem (WMVSP)}
asks to maximize 
\begin{equation*}
\sum_{\alpha}C_{\alpha} \dim X_{\alpha} + \sum_{\beta} D_{\beta} \dim Y_{\beta} 
\end{equation*}
over all vanishing subspaces 
$(X_{1},X_{2},\ldots,X_{\mu},Y_{1},Y_2,\ldots,Y_{\nu})$.
Let $m := \sum_{\alpha} m_{\alpha}$ and $n := \sum_{\beta} n_{\beta}$, i.e.,  $A = (A_{\alpha \beta})$ is an $m \times n$ matrix. 
The main result is the pseudo-polynomial time solvability of WMVSP:
\begin{Thm}\label{thm:main'}
	Suppose that arithmetic operations on ${\bf F}$ can be done in constant time.
	WMVSP can be solved in time polynomial in
	$m,n$ and $W$, where $W$ is the maximum of weights $C_{\alpha}, D_{\beta}$.
\end{Thm}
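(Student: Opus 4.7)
The strategy is twofold: first, reduce WMVSP to submodular function minimization on a modular lattice, then solve this discrete problem via convex optimization on the associated CAT(0) orthoscheme complex, implemented by a proximal point iteration.

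I would start by eliminating the $Y$-variables. For any fixed tuple $X = (X_1,\ldots,X_\mu)$, the vanishing constraint (\ref{eqn:vanishing}) forces each $Y_\beta$ to lie in the annihilator of $\sum_\alpha A_{\alpha\beta}^\top X_\alpha \subseteq {\bf F}^{n_\beta}$, which has dimension $n_\beta - \dim \sum_\alpha A_{\alpha\beta}^\top X_\alpha$. Taking $Y_\beta$ as large as possible, and discarding the constant $\sum_\beta D_\beta n_\beta$, WMVSP is equivalent to minimizing
\begin{equation*}
F(X) \;:=\; -\sum_{\alpha} C_\alpha \dim X_\alpha \;+\; \sum_{\beta} D_\beta \dim \Big(\sum_{\alpha} A_{\alpha \beta}^\top X_\alpha\Big)
\end{equation*}
over the product lattice $\mathcal{L} = \prod_\alpha L({\bf F}^{m_\alpha})$, where $L(V)$ denotes the modular lattice of subspaces of $V$ under sum and intersection.

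I would then verify that $F$ is submodular on $\mathcal{L}$. The term $-\sum_\alpha C_\alpha \dim X_\alpha$ is modular, since dimension is modular on each factor lattice. For fixed $\beta$, the map $X \mapsto \sum_\alpha A_{\alpha \beta}^\top X_\alpha$ sends joins to joins, so a direct computation based on $\dim(U+V)+\dim(U\cap V)=\dim U+\dim V$ shows that $X \mapsto \dim \sum_\alpha A_{\alpha\beta}^\top X_\alpha$ is submodular. Nonnegative combinations of submodular functions, plus a modular function, remain submodular, so $F$ is submodular on $\mathcal{L}$.

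The heart of the proof is then the CAT(0) geometry of modular lattices: the orthoscheme complex of $\mathcal{L}$ carries a natural piecewise Euclidean CAT(0) metric, and the submodular function $F$ extends uniquely to a continuous, geodesically convex function whose minimum is attained at a lattice point. Minimizing $F$ thus reduces to convex optimization on a CAT(0) space, which I would tackle with the proximal point algorithm in the sense of Ba\v{c}\'ak--Jost. The main obstacle will be quantifying complexity. I expect each proximal step to be computable in polynomial time via a localized submodular minimization on $\mathcal{L}$; the overall iteration count must then be controlled by exploiting two discrete features: $F$ takes only integer values in $[-O(mW),\,O(nW)]$, and any minimizer is a lattice point. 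Converting the usual asymptotic convergence rate of the proximal point method into a $\mathrm{poly}(m,n,W)$ stopping criterion through these integrality and boundedness facts, and then rounding the near-optimal CAT(0)-point back to a vertex of $\mathcal{L}$, is the decisive and most delicate step.
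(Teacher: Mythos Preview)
Your reformulation is correct: eliminating the $Y_\beta$ does give an equivalent problem, and your submodularity argument for $F$ on $\prod_\alpha L({\bf F}^{m_\alpha})$ is valid. The general strategy---Lov\'asz extension to the orthoscheme complex, then proximal iteration---is also the paper's strategy. But there is a genuine gap at exactly the point you flag as ``decisive and most delicate,'' and your elimination of $Y$ makes that gap worse, not better.

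The difficulty is that a single proximal step for $\overline{F}$ on $K(\mathcal{L})$ is a strongly convex perturbation of the original problem; saying it reduces to ``a localized submodular minimization on $\mathcal{L}$'' is circular. The paper avoids this by \emph{not} eliminating $Y$: it works on the larger lattice $\prod_\alpha \mathcal{L}_\alpha \times \prod_\beta \check{\mathcal{M}}_\beta$, replaces the vanishing constraint by a penalty $M\sum_{\alpha,\beta} R_{\alpha\beta}(X_\alpha,Y_\beta)$ with $R_{\alpha\beta}=\rank A_{\alpha\beta}|_{X_\alpha\times Y_\beta}$, and then runs the \emph{splitting} proximal point algorithm, cycling through the summands. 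The point of keeping $Y$ is that each $R_{\alpha\beta}$ involves only \emph{two} factors, so its resolvent is a problem on a single $K(\mathcal{L}_\alpha)\times K(\check{\mathcal{M}}_\beta)$. The paper then proves (Theorems~\ref{thm:Lovasz} and~\ref{thm:R(x,y)}) that this two-factor resolvent can be reduced to a trivial coordinatewise quadratic on $[0,1]^{m_\alpha}\times[0,1]^{n_\beta}$ by choosing an \emph{$A$-orthogonal frame}---a carefully constructed pair of bases diagonalizing the bilinear form $A_{\alpha\beta}$ relative to the chains supporting the current point. This construction is the real content of the proof, and nothing analogous is available for your term $\dim\sum_\alpha A_{\alpha\beta}^\top X_\alpha$, which couples all the $X_\alpha$ at once.

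A second missing ingredient is the polynomial iteration bound: plain PPA on a CAT(0) space only gives asymptotic convergence. The paper adds a small perturbation $\epsilon\sum d({\bf 0},\cdot)^2$ to force strong convexity, then invokes the Ohta--P\'alfia sublinear rate for SPPA (Theorem~\ref{thm:OhtaPalfia}) together with explicit Lipschitz bounds to get $d(z_{kN},\tilde z)^2=O(k^{-1/2})$, and finally uses the integrality of $F$ (your Lemma~\ref{lem:minimizer}-type argument) to round after polynomially many steps. Your proposal gestures at integrality and boundedness but does not supply the strong-convexity mechanism that turns convergence into a complexity bound.
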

The algorithm in this theorem is applicable to the case where ${\bf F}$ is a finite field (with $\log |{\bf F}|$ fixed).
However, if ${\bf F}$ is a rational field ${\bf Q}$, then 
the required bit length is not bounded, 
though our algorithm only requires a polynomial number 
of arithmetic operations.

%
Significances, implications, and novel proof techniques of this result 
are explained in the following.

\paragraph{Submodular optimization on modular lattice.}
MVSP (or WMVSP) is viewed as a submodular function minimization (SFM)
on the lattice of all vector subspaces of a vector space.
Such a lattice is a typical instance of a {\em modular lattice}.
Submodular optimization on modular lattice 
is a challenging field in combinatorial optimization.
Kuivinen~\cite{Kuivinen09,Kuivinen11} proved a good characterization
of SFM on the product ${\cal L}^n$ of a modular lattice ${\cal L}$, 
where ${\cal L}$ is finite and is a part of the input.
In this setting, 
Fujishige, Kir\'aly,  Makino, Takazawa, and Tanigawa~\cite{FKMTT14} proved 
the polynomial time solvability of SFM on ${\cal L}^n$ 
where ${\cal L}$ is a modular lattice of rank $2$.
In the valued-CSP setting where 
a submodular function is given as a sum of submodular functions with a fixed number of variables, 
the tractability criterion of Kolmogorov, Thapper, and \v{Z}iv\'ny~\cite{KTZ13} 
implies that SFM on ${\cal L}^n$ is solvable in polynomial time.
In contrast with these results, our SFM is defined
on an {\em infinite} modular lattice ruled by a linear algebraic machinery.
Our result is a step toward understanding this type of discrete optimization problems over a lattice of vector subspaces.

\paragraph{Beyond Euclidean convexity: Outline of the proof.}
No reasonable LP/convex relaxation  (allowing infiniteness) 
is known for MVSP and WMVSP.
This is a main reason of the difficulty. 
Beyond Euclidean convexity, 
our proof employs a method of 
a {\em non-Euclidean convex optimization}, 
more specifically, {\em convex optimization on CAT(0)-space.}
Here 
a {\em CAT(0)-space} is a nonpositively-curved metric space enjoying various fascinating properties analogous to those in the Euclidean space; see \cite{BrHa}.
One of important features of a CAT(0)-space 
is the unique geodesic property: every pair of points 
can be joined by a unique geodesic.
Through the unique geodesics, several convexity 
concepts (e.g., convex functions) are naturally introduced.
Computational and algorithmic theory on CAT$(0)$-space 
is another challenging research field; see e.g., \cite{ArdilaOwenSullivant12,Bacak13,Owen11}.
Our proof explores the power of the convexity of CAT$(0)$-space
to obtain the polynomial time complexity in discrete optimization. 

As is well-known, a (usual) submodular function 
on Boolean lattice $\{0,1\}^n$
is extended to a convex function on hypercube $[0,1]^n$ in the Euclidean space, 
via {\em Lov\'asz extension}~\cite{Lovasz83}. 
This fact enables us to apply Euclidean convex optimization methods 
(e.g., the ellipsoid method) to various 
problems related to submodular functions. 
Analogous to the embedding $\{0,1\}^n \hookrightarrow [0,1]^n$, 
a modular lattice ${\cal L}$ is embedded into 
a suitable continuous metric space $K({\cal L})$, called 
the {\em orthoscheme complex}~\cite{BM10}. 
Figure~\ref{fig:folder} illustrates the orthoscheme complex of a modular lattice of rank 2, which is obtained by 
gluing Euclidean triangles along one common side.  
	\begin{figure}[t]
		\begin{center}
			\includegraphics[scale=0.5]{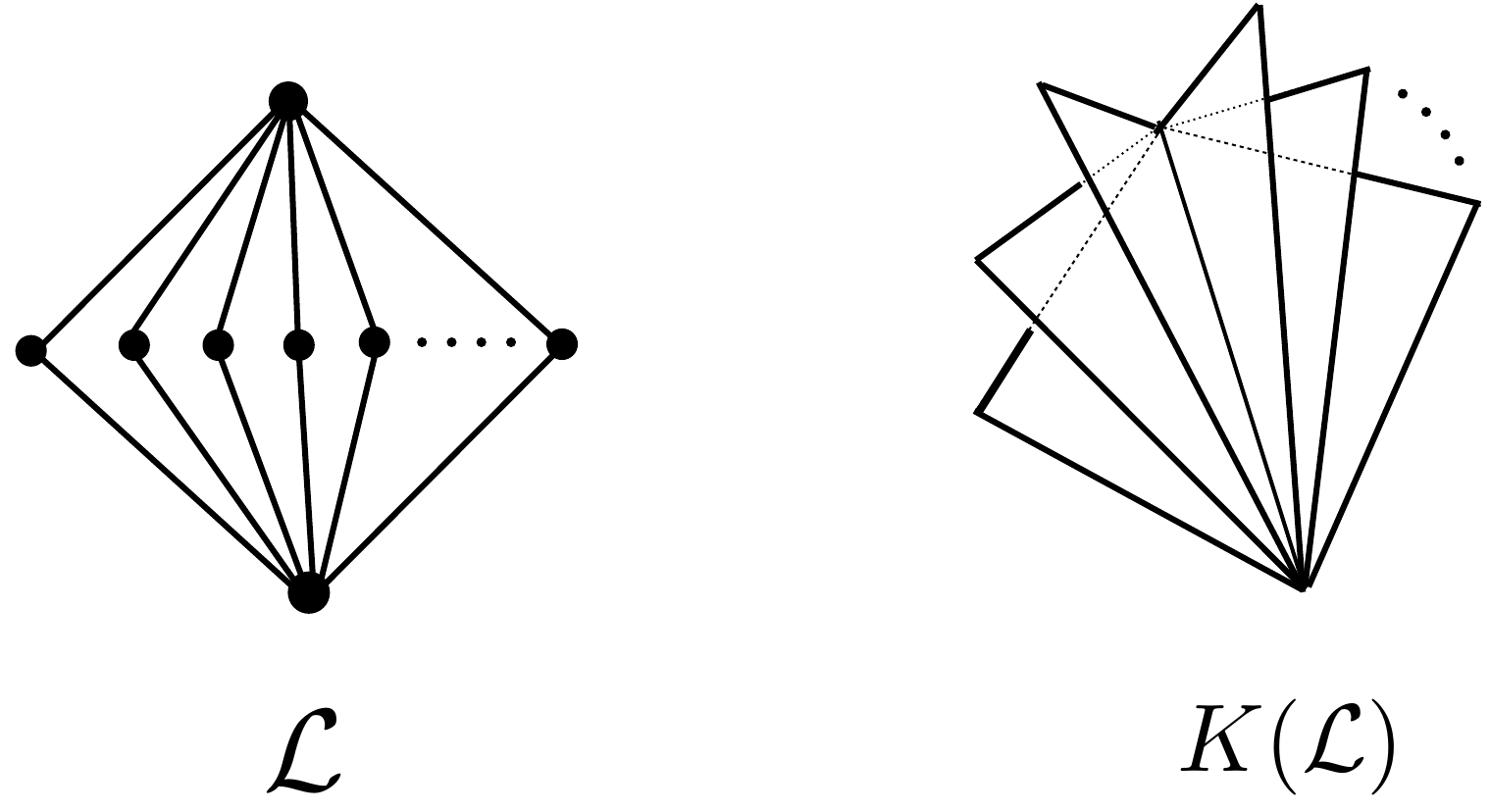}
			\caption{Modular lattice ${\cal L}$ and orthoscheme complex $K({\cal L})$}
			\label{fig:folder}
		\end{center}
	\end{figure}\noindent
It is shown in~\cite{CCHO,HKS17} that $K({\cal L})$ is a
CAT$(0)$-space.
In this setting,
a submodular function is extended to a convex function 
on $K({\cal L})$~\cite{HH16L-convex}.
Consequently, our problem WMVSP becomes 
a convex optimization over a CAT$(0)$-space.

We solve this continuous optimization problem 
by using a CAT(0)-space version
of a {\em proximal point algorithm (PPA)}.
The Euclidean PPA is a well-known simple iterative algorithm 
to minimize a convex function $f$, which
computes the proximal point operator $J_{\lambda}^{f}(z)$ 
of the current point $z$, 
updates $z \leftarrow J_{\lambda}^f(z)$, and repeat.
The PPA is naturally defined on a CAT(0)-space.
Ba\v{c}\'ak~\cite{Bacak13} showed that 
the sequence $(z_{\ell})$ generated by PPA
converges to a minimizer of $f$; see also \cite{BacakBook}.
We apply a version of PPA to our CAT(0)-space relaxation of WMVSP.
By using a recent result of Ohta and P\'alfia~\cite{OhtaPalfia15} 
on the rate of the convergence,   
we show that after a polynomial number of iterations,
a maximum vanishing space is obtained from the current point $z_{\ell}$.
We prove that the proximal operator in each step 
is computed in polynomial time. 
This is the most technical but intriguing part of the proof.

\paragraph{Block-triangularization of partitioned matrix.}
Let us return to the original motivation of MVSP. 
A maximal chain of the maximum vanishing subspaces
provides, via an appropriate change of bases, 
the most refined block-triangularization under transformation (\ref{eqn:trans}), 
which we call the {\em DM-decomposition}~\cite{HH16DM,ItoIwataMurota94}. 
Solving MVSP is not enough to obtaining the DM-decomposition.
We here introduce a reasonably {\em coarse} block-triangularization, 
which we call a {\em quasi DM-decomposition}.
A quasi DM-decomposition still generalizes known important special cases,  
such as CCF for mixed matrices~\cite{MurotaIriNakamura87}.
We show that a quasi DM-decomposition
can be obtained in polynomial time
by solving WMVSP with varying weights.
We believe that obtaining
a quasi DM-decomposition is a limit which we can do by 
combinatorial or optimization methods.
A step to DM-decomposition from quasi DM-decomposition
seems to be a matter of numerical analysis/computation, and includes
the common invariant subspace problem, 
which is an extremely difficult numerical computational problem (see e.g.,\cite{ArapuraPeterson04,JamiolkowskiaPastuszakb15}).

\paragraph{Relation to Edmonds' problem and the recent development.}
After finishing the first version of this paper, 
we found that our result is closely related to 
the recent remarkable development~\cite{DerksenMakam17,GGOW15,GGOW16,IQS15a,IQS15b} 
on Edmonds' problem.
We briefly explain this fact.
{\em Edmonds$'$ problem}~\cite{Edmonds67} asks, 
given a vector space ${\cal A}$ 
of $m \times n$ matrices, 
to determine the maximum rank over matrices in ${\cal A}$.
Suppose that the matrix space ${\cal A}$ 
is given as its basis $A_1,A_2,\ldots A_N$.
Then the problem is to determine
\begin{equation}\label{eqn:rank}
\rank {\cal A} := \max \left\{\rank \sum_{i=1}^N z_i A_i \mid z_i \in {\bf F} \ (i=1,2,\ldots,N) \right\}. 
\end{equation}
As is noticed in~\cite{Lovasz89}, there is a weak duality.  
For a nonnegative integer $c$, a {\em $c$-shrunk subspace} 
is a vector subspace $X \subseteq {\bf F}^m$ with 
\[
\dim X - \dim \sum_{i=1}^{N} A_i (X) \geq c,
\]
where $A_i$ is viewed as ${\bf F}^{m} \to ({\bf F^*})^n$ 
by $x \mapsto x^\top A_i$ for dual ${\bf F}^*$ of ${\bf F}$.
For a $c$-shrunk subspace $X$,
via a basis transformation
(including bases of $X$ and $\sum_{i=1}^{N} A_i (X)$), 
all $A_i$ (i.e., all matrices in ${\cal A}$)
are transformed to have the zero block of size 
$n + c$ in the same position.
Consequently, $\rank {\cal A}$ is bounded by $m - c$.
The relation between 
shrunk subspaces and vanishing subspaces is explained as follows, 
where a vanishing subspace $(X,Y)$ in this setting is meant as 
a pair $(X,Y)$ of subspaces in ${\bf F}^m$ and ${\bf F}^n$
with $A_i(X,Y) = \{0\}$ for all $i$.
For a $c$-shrunk subspace $X$,
we obtain a vanishing subspace $(X,Y)$ with $\dim X + \dim Y \geq n + c$ for $Y := (\sum_{i=1}^{N} A_i (X))^{\bot} = \bigcap_{i=1}^N (A_i(X))^{\bot}$,
where $(\cdot)^{\bot}$ means the orthogonal complement.  
Conversely,
$X$ for a vanishing subspace $(X,Y)$ is a $(\dim X + \dim Y - n)$-shrunk subspace.
Summarizing, we obtain the following weak duality relation:
\begin{eqnarray}\label{eqn:weakdual1}
\rank {\cal A} &\leq &\min \{ m - c \mid c \geq 0: \mbox{$\exists c$-shrunk subspace $X$}\} \\
& = & \min \{m + n - \dim X - \dim Y \mid \mbox{vanishing subspace $(X,Y)$} \}.\label{eqn:weakdual2}
\end{eqnarray}
Now MVSP is nothing but the problem on the right hand side in (\ref{eqn:weakdual2}) 
for the case where $A_i$ is defined as the $m \times n$ matrix 
such that the $(\alpha,\beta)$-th block is $A_{\alpha \beta}$ and other blocks are zero.

The inequality in (\ref{eqn:weakdual1}) is strict in general.
In 2004, Gurvits~\cite{Gurvits04} considered the decision version of 
the Edmonds' problem, and introduced the {\em Edmonds-Rado property} 
for square matrix space ${\cal A}$, which is the property that
${\cal A}$ contains a nonsingular matrix if and only if 
there is no $1$-shrunk subspace 
(or equivalently if there is no vanishing subspace $(X,Y)$ 
with $\dim X + \dim Y > n$). This is the equality case of (\ref{eqn:weakdual1}).
He developed a polynomial time algorithm 
to solve the decision version of 
the Edmonds problem in the case of ${\bf F} = {\bf Q}$.

From 2015, there has been a significant development on this problem; 
the (long) introduction of \cite{GGOW15} is an exciting reading of this development.
Ivanyos, Qiao, and Subrahmanyam~\cite{IQS15a}
noticed that the right hand side of (\ref{eqn:weakdual1}) 
coincides with the {\em non-commutative rank}~\cite{FortinReutenauer04} 
of linear form $\sum_i z_i A_i$ on the {\em skew free field} 
generated by {\em non-commutative} indeterminates $z_i$, 
whereas (\ref{eqn:rank}) is equal to the rank of $\sum_i z_i A_i$ 
over the rational function field ${\bf F}(z_1,z_2,\ldots,z_N)$
for commutative indeterminates $z_i$. 
They showed that the non-commutative rank (nc-rank) of ${\cal A}$ is also given by
\begin{equation}\label{eqn:ncrank}
\max \left\{  \frac{1}{d} \rank \sum_{i=1}^N Z_i \otimes A_i \mid d \geq 1, \ \mbox{$d \times d$ matrix $Z_i$ over ${\bf F}$ for $i=1,2,\ldots,N$}\right\},
\end{equation}
where $\otimes$ is the Kronecker product. 
Namely (\ref{eqn:ncrank}) is 
viewed as the primal problem of the right hand side of (\ref{eqn:weakdual1}) 
in which the strong duality holds.
They also presented the first deterministic algorithm to compute the nc-rank.
Garg, Gurvits, Oliveira, and Wigderson~\cite{GGOW15}
showed in ${\bf F} = {\bf Q}$ that Gurvits' algorithm in \cite{Gurvits04} can be used to compute the nc-rank in polynomial time, 
where a substitution $Z_i$ and 
shrunk subspace $X$ attaining the nc-rank are not obtained in this algorithm.
Derksen and Makam~\cite{DerksenMakam17} 
gave a polynomial bound of $d$ in (\ref{eqn:ncrank}) 
via invariant theoretic arguments. 
By using this result, Ivanyos, Qiao, and Subrahmanyam~\cite{IQS15b}
finally proved that a substitution 
$Z_i$  and shrunk subspace $X$ attaining the nc-rank can be computed in polynomial time.
In particular, by using their algorithm, 
MVSP can be solved in polynomial time for any field ${\bf F}$.
Garg, Gurvits, Oliveira, and Wigderson~\cite{GGOW16} 
used Gurvits' algorithm to solve the feasibility problem on 
the {\em Brascamp-Lieb inequalities} in pseudo polynomial time. 
This problem is essentially WMVSP 
with single row-block ($\mu=1$) and ${\bf F} = {\bf Q}$.

Gurvits' algorithm is analytic, and is based on matrix scaling inspired by quantum information theory.
The algorithm by Ivanyos, Qiao, and Subrahmanyam
is based on the {\em second Wong sequence} for matrix spaces, 
which is a vector space analogue of augmenting path in bipartite matching.
Our algorithm for WMVSP is build on submodularity on modular lattice and 
continuous optimization on CAT(0)-space, 
and is completely different from both algorithms.
Besides the drawback on the bit-length issue, 
its conceptual simplicity and flexibility of our algorithm are notable.
Indeed, our algorithm is easily adapted to compute nc-rank in the same complexity (Remark~\ref{rem:ncrank}).
We believe that the approach presented in this paper will add 
a new perspective to this exciting development on Edmonds' problem.

\paragraph{Organization.}
The rest of this paper is organized as follows.
In Section~\ref{sec:pre}, we summarize 
convex optimization on CAT(0)-space, submodular function, modular lattice, orthoscheme complex, 
and their interplay.
In Section~\ref{sec:MVSP}, 
we first reduce WMVSP to a convex optimization on a CAT(0)-space, 
and apply PPA to prove Theorem~\ref{thm:main'}.
In Section~\ref{sec:DM}, 
we explain implications on block-triangularization of partitioned matrix.

\section{Preliminaries}\label{sec:pre}

\subsection{Convex optimization on CAT(0)-space}

\subsubsection{CAT(0)-space}
Let $S$ be a metric space, and 
let $d: S \times S \to \RR_+$ denote the distance function of $S$.
Let $\diam S := \sup_{x,y \in S} d(x,y)$ 
denote the diameter of $S$.
A {\em path} in $S$ is a continuous map $\gamma$ from $[0,1]$ 
to $S$.
The length of a path $\gamma$ is defined as $\sup \sum_{i=0}^{n-1} d(\gamma(t_i), \gamma(t_{i+1}))$
over $0=t_0 < t_1 < t_2 < \cdots < t_n = 1$ and $n > 0$.
We say that a path $\gamma$ {\em connects} $x,y \in S$ if $\gamma(0) = x$ and $\gamma(1) = y$.
A {\em geodesic} is a path $\gamma$ satisfying 
$d(\gamma(s), \gamma(t)) = d(\gamma(0), \gamma(1)) |s - t|$ 
for every $s,t \in [0,1]$.
Metric space
$S$ is called a {\em geodesic metric space} if
any two points in $S$ is connected by a geodesic, and is said to be {\em uniquely geodesic}
if any two points in $S$ is connected by a unique geodesic.
For points $x,y$ in $S$, let $[x,y]$ 
denote the image of a geodesic $\gamma$ connecting $x,y$
(though a geodesic is not unique).
For $t \in [0,1]$, the point $p$ on $[x,y]$ 
with $d(x,p)/d(x,y) = t$ 
is formally written 
as $(1-t)x + t y$.
A {\em geodesic triangle} of $x,y,z \in S$ is 
the union $[x,y] \cup [y,z] \cup [z,x]$.
In the Euclidean plane $\RR^2$, 
there exist points $\bar x, \bar y, \bar z \in \RR^2$
such that $d(x,y) = \|\bar x - \bar y\|_2$, 
$d(y,z) = \|\bar y - \bar z\|_2$, and
$d(z,x) = \|\bar z - \bar x\|_2$.
%
%
For $p \in [x,y]$, the {\em comparison point} of $p$  is 
the unique point $\bar p$ in $[\bar x, \bar y]$ with $d(x,p) = \| \bar x - \bar p \|_2$.
A geodesic metric space is called {\em CAT$(0)$} if 
for every geodesic triangle $\Delta = [x,y] \cup [y,z] \cup [z,x]$ and every $p,q \in \Delta$, it holds
$
d(p,q) \leq \| \bar p - \bar q\|_{2}. 
$
An intuitive meaning of this definition is that any triangle in $S$ 
is thinner than the corresponding triangle in the Euclidean plane.
See Figure~\ref{fig:cat0}.
	\begin{figure}[t]
		\begin{center}
			\includegraphics[scale=0.6]{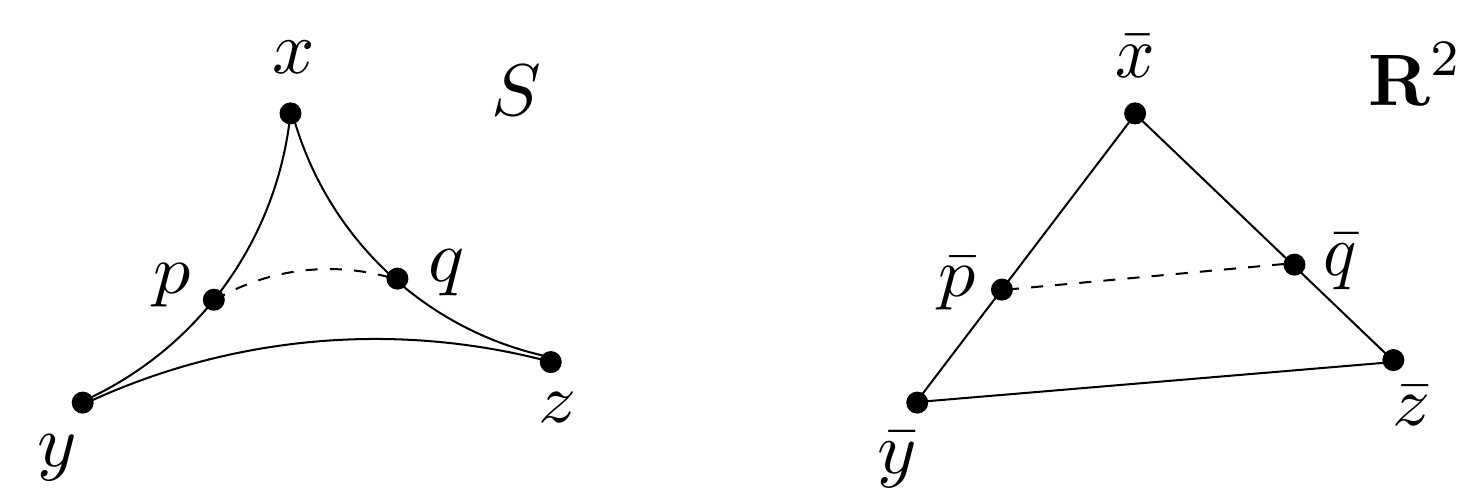}
			\caption{CAT(0) space}
			\label{fig:cat0}
		\end{center}
	\end{figure}\noindent

\begin{Prop}[{\cite[Proposition 1.4]{BrHa}}]\label{prop:uniquely-geodesic}
	A {\rm CAT}$(0)$-space is uniquely geodesic.
\end{Prop}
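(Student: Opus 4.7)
The plan is to invoke the CAT(0) inequality on a degenerate geodesic triangle to force coincidence of two geodesics at every parameter. Suppose $\gamma_1,\gamma_2:[0,1]\to S$ are two geodesics from $x$ to $y$; I want to show $\gamma_1(t)=\gamma_2(t)$ for every $t\in[0,1]$. First, I would form the geodesic triangle with vertex list $(x,y,y)$: its three sides are (the images of) $\gamma_1$, $\gamma_2$, and the constant path at $y$. Any comparison triangle $\bar{x}\bar{y}_1\bar{y}_2$ in $\RR^2$ must satisfy $\|\bar{x}-\bar{y}_i\|_2=d(x,y)$ for $i=1,2$ and $\|\bar{y}_1-\bar{y}_2\|_2=0$, which forces $\bar{y}_1=\bar{y}_2=:\bar{y}$, so the comparison triangle degenerates to the Euclidean segment $[\bar{x},\bar{y}]$.

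Next, fix $t\in[0,1]$ and set $p:=\gamma_1(t)$ and $q:=\gamma_2(t)$. Since $\gamma_1,\gamma_2$ are geodesics of length $d(x,y)$, one has $d(x,p)=d(x,q)=t\,d(x,y)$. Consequently, the comparison points $\bar{p}\in[\bar{x},\bar{y}_1]$ and $\bar{q}\in[\bar{x},\bar{y}_2]$ both lie on the common segment $[\bar{x},\bar{y}]$ at distance $t\,d(x,y)$ from $\bar{x}$, so $\bar{p}=\bar{q}$. The CAT(0) inequality then yields $d(p,q)\le\|\bar{p}-\bar{q}\|_2=0$, so $\gamma_1(t)=\gamma_2(t)$. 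Since $t$ was arbitrary, $\gamma_1\equiv\gamma_2$.

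The main (and essentially only) obstacle is a definitional one: the paper introduces CAT(0) using geodesic triangles on three vertices without explicitly discussing the degenerate case $z=y$. One must either note that the CAT(0) definition and the above comparison-point argument go through verbatim when two vertices coincide, or circumvent the degeneracy. A clean alternative is to work with the non-degenerate triangle on distinct vertices $x,y,z:=\gamma_2(1/2)$: its comparison triangle is collinear (by the triangle equality $d(x,z)+d(z,y)=d(x,y)$), and the comparison point of $m:=\gamma_1(1/2)$ together with the comparison vertex $\bar{z}$ both coincide with the midpoint of $[\bar{x},\bar{y}]$, so the CAT(0) inequality gives $d(m,z)=0$, i.e., $\gamma_1(1/2)=\gamma_2(1/2)$. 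Iterating the midpoint argument at all dyadic parameters $k/2^n$ and invoking continuity of $\gamma_1,\gamma_2$ again forces $\gamma_1=\gamma_2$. Either route completes the proof; no deeper machinery than manipulating comparison points is needed.
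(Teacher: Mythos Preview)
Your argument is correct and is essentially the standard proof (as given, e.g., in the cited reference \cite{BrHa}). Note, however, that the paper itself does \emph{not} prove this proposition: it is stated with a citation to \cite[Proposition~1.4]{BrHa} and used as a black box. So there is no ``paper's own proof'' to compare against here; you have simply supplied the standard comparison-triangle argument that the cited reference contains.
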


\subsubsection{Convex function}
Let  $S$ be a {\rm CAT}$(0)$ space. 
A function $f: S\to \RR$ is said to {\em convex}
if
it satisfies 
\begin{equation}\label{eqn:convexity}
 f((1-t) x + t y) \leq (1-t) f(x) + t f(y)
\end{equation}
 for every $x,y \in S$ and $t \in [0,1]$. 
A function $f: S \to \RR$ is said to {\em strongly convex}
with parameter $\kappa > 0$
if
it satisfies 
\begin{equation}\label{eqn:strong_convexity}
f((1-t) x + t y) \leq (1-t) f(x) + t f(y) - \frac{\kappa}{2} t(1-t) d(x,y)^2 
\end{equation}
for every $x,y \in S$ and $t \in [0,1]$. 
A function $f: S \to \RR$ is said to {\em $L$-Lipschitz}
with parameter $L \geq 0$
if
it satisfies 
\begin{equation*}
	|f(x) - f(y)| \leq L d(x,y)
\end{equation*}
for every $x,y \in S$.

\begin{Lem}\label{lem:d^2}
	For any $z \in S$,
	the function $x \mapsto d(z,x)^2$ is strongly convex with $\kappa = 2$, and 
	is $L$-Lipschitz with $L \leq 3\diam S$.
\end{Lem}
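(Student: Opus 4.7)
The plan is to handle the two claims separately, reducing the CAT(0) strong convexity to a Euclidean identity via the comparison triangle, and deriving the Lipschitz bound from factoring a difference of squares.

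For strong convexity with $\kappa = 2$, I would fix $x,y \in S$ and $t \in [0,1]$, and consider the point $p = (1-t)x + ty$ on the geodesic $[x,y]$. Build a comparison triangle $\bar x, \bar y, \bar z \in \mathbb{R}^2$ for the geodesic triangle spanned by $x,y,z$. Let $\bar p$ be the comparison point of $p$ on $[\bar x, \bar y]$; by the definition of the comparison point, $\bar p = (1-t)\bar x + t \bar y$. The CAT(0) inequality applied to the pair $(z,p)$ gives $d(z,p) \leq \|\bar z - \bar p\|_2$, hence $d(z,p)^2 \leq \|\bar z - \bar p\|_2^2$. The key step is now the Euclidean identity
\begin{equation*}
\bigl\| \bar z - ((1-t)\bar x + t \bar y) \bigr\|_2^2 = (1-t)\|\bar z - \bar x\|_2^2 + t \|\bar z - \bar y\|_2^2 - t(1-t)\|\bar x - \bar y\|_2^2,
\end{equation*}
which follows from expanding the inner product. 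Substituting the comparison-triangle side-length equalities $\|\bar z - \bar x\|_2 = d(z,x)$, $\|\bar z - \bar y\|_2 = d(z,y)$, $\|\bar x - \bar y\|_2 = d(x,y)$ yields exactly the required inequality
\begin{equation*}
d(z,p)^2 \leq (1-t)\, d(z,x)^2 + t\, d(z,y)^2 - t(1-t)\, d(x,y)^2,
\end{equation*}
which is \eqref{eqn:strong_convexity} with $\kappa = 2$.

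For the Lipschitz bound, I would factor
\begin{equation*}
\bigl| d(z,x)^2 - d(z,y)^2 \bigr| = \bigl| d(z,x) - d(z,y) \bigr|\cdot \bigl( d(z,x) + d(z,y) \bigr),
\end{equation*}
apply the triangle inequality to get $|d(z,x) - d(z,y)| \leq d(x,y)$, and bound $d(z,x) + d(z,y) \leq 2\, \diam S$. This already yields the $L$-Lipschitz property with $L \leq 2\, \diam S$, which is stronger than (and hence implies) the claim $L \leq 3\, \diam S$.

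There is no serious obstacle here; the only substantive ingredient is the CAT(0) comparison axiom itself (already recalled in the paper) combined with the elementary Euclidean identity above. The Lipschitz part is purely metric and uses nothing beyond the triangle inequality and the diameter bound.
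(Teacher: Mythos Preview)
Your proof is correct and, for the strong convexity claim, simply spells out what the paper means by ``follows directly from the definition of CAT(0)-space'': the comparison triangle plus the Euclidean parallelogram identity is exactly the standard derivation.

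For the Lipschitz bound your route differs slightly from the paper's. The paper bounds $d(z,x) \leq d(z,y) + d(x,y)$ by the triangle inequality and expands $(d(z,y)+d(x,y))^2 - d(z,y)^2 = (2d(z,y)+d(x,y))\,d(x,y) \leq 3\,(\diam S)\, d(x,y)$, which is where the constant $3$ comes from. Your difference-of-squares factorization $|d(z,x)-d(z,y)|\cdot(d(z,x)+d(z,y))$ is cleaner and yields the sharper constant $2\,\diam S$. Both arguments are equally elementary; yours just avoids the slack introduced by squaring after applying the triangle inequality.
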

The former follows directly from the definition of CAT(0)-space.
The latter follows from $d(z,x)^2 - d(z,y)^2 \leq (d(z,y)+ d(x,y))^2- d(z,y)^2 
= (2d(z,y)+ d(x,y)) d(x,y) \leq (3 \diam S) d(x,y)$.

\subsubsection{Proximal point algorithm}
Let $S$ be a complete CAT(0)-space (which 
is also called an {\em Hadamard space}).
For a convex function $f: S \to \RR$ and $\lambda > 0$
the {\em resolvent} of $f$ is a map $J_\lambda^f: S \to S$ defined by
\begin{equation}
J_{\lambda}^f (x) := \argmin_{y \in S} \left( f(y) + \frac{1}{2\lambda} d(x,y)^2 \right)
\quad (x \in S).
\end{equation}
Since the function $y \mapsto f(y) + \frac{1}{2\lambda} d(x,y)^2$ is strongly convex with parameter $1/\lambda > 0$, 
the minimizer is uniquely determined, and $J_{\lambda}^f$ is well-defined.
The {\em proximal point algorithm (PPA)} is 
to iterate update $x \leftarrow J_{\lambda}^f(x)$.
This simple algorithm generates a sequence converging to a minimizer of $f$
under a mild assumption; see \cite{Bacak13,BacakBook}.
The {\em splitting proximal point algorithm (SPPA)}~\cite{Bacak14,BacakBook}, which we will use, minimizes a convex function $f: S \to \RR$ of the following form
\begin{equation}
	f := \sum_{i=1}^N f_i,
\end{equation}
where each $f_i:S \to \RR$ is a convex function.
Consider a sequence $(\lambda_k)_{k=1,2,\ldots,}$ satisfying
\begin{equation}
\sum_{k=0}^{\infty} \lambda_k = \infty, \quad \sum_{k=0}^\infty \lambda_k^2 < \infty.
\end{equation}
 
\begin{description}
\item[Splitting Proximal Point Algorithm (SPPA)]
\item[$\bullet$] Let $x_0 \in S$ be an initial point.
\item[$\bullet$] For $k=0,1,2,\ldots$, repeat the following:
\[
x_{kN+i} := J_{\lambda_k}^{f_i} (x_{kN+i -1}) \quad (i=1,2,\ldots,N).
\]
\end{description}
Bac\v{a}k~\cite{Bacak14} showed that the sequence generated by
SPPA converges to a minimizer of $f$ if 
$S$ is locally compact.
Ohta and P\'alfia~\cite{OhtaPalfia15} 
proved the sublinear convergence of SPPA if 
$f$ is strongly convex and $S$ is not necessarily locally compact.
\begin{Thm}[\cite{OhtaPalfia15}]\label{thm:OhtaPalfia}
Suppose that 
$f$ is strongly convex with parameter $\epsilon > 0$
and each $f_i$ is $L$-Lipschitz.
Let $x^*$ be the unique minimizer of $f$.
For $0 < a < 1$,
define the sequence $(\lambda_k)$ by
\begin{equation}
\lambda_k := \frac{1- a}{ \epsilon (k+1)}.
\end{equation} 
Then the sequence $(x_\ell)$ generated by SPPA satisfies
\begin{equation}
d(x_{kN}, x^*)^2 \leq \frac{1}{(k+2)^{1-a}} 
   \left(   d(x_0,x^*)^2 + h(a)\frac{L^2N(N+1)}{\epsilon^2} \right)
\quad (k =1,2,\ldots),
\end{equation}
where $h(a) := 2^{2-a}(1 - a)^2 (1+a)/a$. 
\end{Thm}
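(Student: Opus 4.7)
The plan is to derive a one-step quasi-Fej\'er-type recursion for $u_k := d(x_{kN}, x^*)^2$ by chaining the $N$ inner resolvent updates of one outer loop, close the recursion via $\epsilon$-strong convexity of $f$, and then unroll the resulting scalar inequality against the specific choice of $\lambda_k$. The central analytic ingredient is the CAT(0) resolvent inequality: for any convex $g : S \to \RR$, any $x, y \in S$ and any $\lambda > 0$,
\[
d(J_\lambda^g(x), y)^2 + d(x, J_\lambda^g(x))^2 \;\leq\; d(x, y)^2 + 2\lambda\bigl(g(y) - g(J_\lambda^g(x))\bigr),
\]
which follows from the first-order optimality condition defining $J_\lambda^g(x)$ together with the CAT(0) four-point inequality. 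Applied with $y = x_{kN+i-1}$ and combined with Lipschitzness, it already yields the step-size bound $d(x_{kN+i-1}, x_{kN+i}) \leq \lambda_k L$.

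Setting $y = x^*$ and $g = f_i$, summing the resolvent inequality over $i = 1, \dots, N$ and dropping the nonnegative term $\sum_i d(x_{kN+i-1}, x_{kN+i})^2$ on the left gives
\[
d(x_{(k+1)N}, x^*)^2 \;\leq\; d(x_{kN}, x^*)^2 + 2\lambda_k \sum_{i=1}^N \bigl(f_i(x^*) - f_i(x_{kN+i})\bigr).
\]
I would then reassemble $f = \sum_i f_i$ at the single point $x_{(k+1)N}$: using $|f_i(x_{kN+i}) - f_i(x_{(k+1)N})| \leq L\,d(x_{kN+i}, x_{(k+1)N}) \leq (N-i)\lambda_k L^2$ and summing over $i$ produces an error quadratic in $\lambda_k L N$. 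Invoking $\epsilon$-strong convexity at $x^*$, $f(x_{(k+1)N}) - f(x^*) \geq (\epsilon/2)\, u_{k+1}$, yields the clean scalar recursion
\[
(1 + \lambda_k \epsilon)\, u_{k+1} \;\leq\; u_k + \lambda_k^2 L^2 N(N+1),
\]
where a careful Lipschitz accounting produces exactly the announced constant $N(N+1)$.

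Substituting $\lambda_k = (1-a)/(\epsilon(k+1))$ turns the contraction factor into $1 + \lambda_k \epsilon = (k+2-a)/(k+1)$, and unrolling the recursion gives
\[
u_k \;\leq\; \Pi_k \, u_0 + \frac{(1-a)^2 L^2 N(N+1)}{\epsilon^2} \sum_{j=0}^{k-1} \frac{\Pi_k/\Pi_{j+1}}{(j+1)(j+2-a)},
\]
with $\Pi_k = \Gamma(k+1)\Gamma(2-a)/\Gamma(k+2-a)$. A direct Gamma-ratio estimate shows $\Pi_k \leq C_a (k+2)^{-(1-a)}$, and comparing the weighted sum against $\int j^{a-2}\,dj$ yields the claimed rate together with the explicit constant $h(a) = 2^{2-a}(1-a)^2(1+a)/a$.

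The main obstacle is this final step: extracting both the precise exponent $1 - a$ and the explicit constant $h(a)$ from the Gamma-ratio product $\Pi_k$ and the accompanying weighted sum requires careful non-asymptotic bounds rather than loose Stirling estimates. The middle Lipschitz bookkeeping is elementary but must be tight, since a lazy triangle-inequality expansion propagates an extra factor of $N$ and destroys the clean $N(N+1)$ scaling that feeds into $h(a)$.
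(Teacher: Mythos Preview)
The paper does not contain a proof of this theorem: it is quoted verbatim as a result of Ohta and P\'alfia~\cite{OhtaPalfia15}, with only the remark that the hypothesis $L \geq 1$ in the original statement is not actually used. There is therefore no ``paper's own proof'' to compare against; the authors treat it as a black box imported from the literature.

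That said, your sketch is a faithful reconstruction of the argument in~\cite{OhtaPalfia15}. The one-step resolvent inequality in CAT(0), the chaining over the $N$ inner steps, the Lipschitz reassembly of $\sum_i f_i(x_{kN+i})$ at a common anchor point, the closure via strong convexity, and the scalar unrolling against $\lambda_k = (1-a)/(\epsilon(k+1))$ are exactly the ingredients Ohta and P\'alfia use. Your own diagnosis of the difficulty is accurate: the recursion itself is routine, and the only place requiring care is squeezing the explicit exponent $1-a$ and the constant $h(a)$ out of the product $\prod_j (j+1)/(j+2-a)$ and the associated weighted tail sum, which is a Gamma-ratio estimate rather than an asymptotic one. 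One minor point to watch: depending on whether you anchor the Lipschitz reassembly at $x_{kN}$ or at $x_{(k+1)N}$, the combinatorial sum comes out as $N(N-1)$ or $N(N+1)$; the stated bound uses the larger of the two, so either accounting is safe but you should be explicit about which anchor you choose.
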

Note that Ohta and P\'alfia stated this theorem 
assuming $L \geq 1$ but this condition is not used in their proof, and does not affect our argument.

\subsection{Modular lattice}
A {\em lattice} ${\cal L}$ 
is a partially ordered set
such that every pair $p,q$ of elements 
has meet $p \wedge q$ (greatest common lower bound) 
and join $p \vee q$ (lowest common upper bound). 
Let $\preceq$ denote the partial order.
By $p \prec q$ we mean $p \preceq q$ and $p \neq q$.
A pairwise comparable subset of ${\cal L}$, 
arranged as $p_0 \prec p_1 \prec \cdots \prec p_k$,
is called a {\em chain} (from $p_0$ to $p_k$),
where $k$ is called the length. 
In this paper, we only consider lattices in which 
any chain has a finite length.
Let ${\bf 0}$ and ${\bf 1}$ denote the minimum and maximum elements of ${\cal L}$, respectively.
The rank $r(p)$ of element $p$ is defined 
as the maximum length of a chain from ${\bf 0}$ to $p$. 
The rank of lattice ${\cal L}$ is defined as the rank of ${\bf 1}$. 

A lattice ${\cal L}$ is called {\em modular} if 
for every triple $x,a,b$ of elements with $x \preceq b$, 
it holds $x \vee (a \wedge b) = (x \vee a) \wedge b$.
It is known that a modular lattice is exactly such 
a lattice that satisfies
\begin{equation}\label{eqn:modular}
r(p) + r(q) = r(p \wedge q) + r(p \vee q) \quad (p,q \in {\cal L}).
\end{equation}
An element of rank $1$ is called an {\em atom}.
A modular lattice ${\cal L}$ is said to be {\em complemented} 
if every element can be represented as a join of atoms.
A lattice ${\cal L}$ is said to be {\em distributive} 
if $x \vee (y \wedge z)= (x \vee y)\wedge (x \vee z)$ 
and $x \wedge (y \vee z)= (x \wedge y)\vee (x \wedge z)$ 
hold for every triple $x,y,z$ of elements.
A distributive lattice is a modular lattice. 
A complemented distributive lattice is exactly a {\em Boolean lattice}, 
which is a lattice isomorphic to 
the poset $2^{\{1,2,\ldots,n\}}$ of all subsets 
of $\{1,2,\ldots,n\}$ with the inclusion order $\subseteq$.

A function $f:{\cal L} \to \RR$ is said to be {\em submodular} if
\begin{equation}
f(p) + f(q) \geq f(p \wedge q) + f(p \vee q) \quad (p,q \in {\cal L}).
\end{equation}

Let $\check {\cal L}$ denote the opposite lattice of ${\cal L}$, 
where $\check {\cal L}$ and ${\cal L}$ are equal as a set, and
the partial order of $\check {\cal L}$ is the reverse of that of ${\cal L}$.
For a complemented modular lattice ${\cal L}$, 
the opposite lattice $\check {\cal L}$ is also a complemented modular lattice.

A canonical example of a complemented modular lattice is 
the family ${\cal L}$ of all subspaces of a vector space $U$, 
where the partial order is the inclusion order with 
$\wedge = \cap$, and $\vee = +$.
The rank of a subspace $X \in {\cal L}$ is equal to the dimension $\dim X$.
The following equality of dimension is well-known:
\begin{equation}\label{eqn:dim}
\dim X+ \dim X' = \dim (X \cap X') + \dim (X+X') \quad (X,X' \in {\cal L}).
\end{equation}%

\subsubsection{Basic properties}
Let ${\cal L}$ be a modular lattice of rank $n$, 
and let $r$ be the rank function of ${\cal L}$. 
For $k > 0$,
we denote $p \prec_k q$ if $p \preceq q$ and $r(q) - r(p) = k$.
\begin{Lem}\label{lem:monotone}
	For $p,p',q,q' \in {\cal L}$ with  
	$p \prec_k p'$ and $q \prec_1 q'$, it holds that
	\[
	r(p' \wedge q') - r(p \wedge q') - r(p' \wedge q) + r(p \wedge q) \in \{0,1\}.
	\]
	In particular, the function $u \mapsto r(p' \wedge u) - r(p \wedge u)$ 
	is nondecreasing and takes values from $0$ to $k$.
\end{Lem}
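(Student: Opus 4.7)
The plan is to establish the two bounds $0 \leq \Delta \leq 1$ separately, where $\Delta := r(p' \wedge q') - r(p \wedge q') - r(p' \wedge q) + r(p \wedge q)$. All three steps use only the modular rank identity \eqref{eqn:modular}, combined with the immediate meet/join relations that follow from $p \preceq p'$ and $q \preceq q'$.

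For the lower bound, I would apply \eqref{eqn:modular} to the pair $p \wedge q'$ and $p' \wedge q$. Their meet collapses to $(p \wedge q') \wedge (p' \wedge q) = p \wedge q$, while their join satisfies $(p \wedge q') \vee (p' \wedge q) \preceq p' \wedge q'$. Therefore
\begin{equation*}
r(p \wedge q') + r(p' \wedge q) = r(p \wedge q) + r\bigl((p \wedge q') \vee (p' \wedge q)\bigr) \leq r(p \wedge q) + r(p' \wedge q'),
\end{equation*}
which rearranges to $\Delta \geq 0$.

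For the upper bound, I would write $\Delta = \bigl[r(p' \wedge q') - r(p' \wedge q)\bigr] - \bigl[r(p \wedge q') - r(p \wedge q)\bigr]$ and show that each bracket lies in $\{0,1\}$. Each is nonnegative by monotonicity of $r$. For the upper estimate, apply \eqref{eqn:modular} to the pair $s \wedge q'$ and $q$, with $s \in \{p, p'\}$: from $q \preceq q'$ we get $(s \wedge q') \wedge q = s \wedge q$ and $(s \wedge q') \vee q \preceq q'$, so
\begin{equation*}
r(s \wedge q') + r(q) \leq r(s \wedge q) + r(q'),
\end{equation*}
giving $r(s \wedge q') - r(s \wedge q) \leq r(q') - r(q) = 1$. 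Hence each bracket belongs to $\{0,1\}$, forcing $\Delta \in \{-1,0,1\}$; combined with $\Delta \geq 0$, we conclude $\Delta \in \{0,1\}$.

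The "in particular" statement then follows quickly. Monotonicity of $u \mapsto r(p' \wedge u) - r(p \wedge u)$ is obtained by iterating the inequality $\Delta \geq 0$ along a saturated chain from $u$ to any $u' \succeq u$, with each rank-one step playing the role of $q \prec_1 q'$. The value $0$ is attained at $u = {\bf 0}$, and the upper bound $k$ comes from one more application of \eqref{eqn:modular}: applied to $p' \wedge u$ and $p$, it yields $r(p' \wedge u) - r(p \wedge u) = r\bigl((p' \wedge u) \vee p\bigr) - r(p) \leq r(p') - r(p) = k$, using $(p' \wedge u) \vee p \preceq p'$. The only real technical point is choosing the pairs in \eqref{eqn:modular} so that the meet collapses to something familiar while the join stays inside a controlled interval; no deeper obstacle is expected.
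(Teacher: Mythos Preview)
Your proof is correct and follows essentially the same approach as the paper: both show each bracket $r(s \wedge q') - r(s \wedge q)$ lies in $\{0,1\}$ via the modular rank identity \eqref{eqn:modular}, then rule out $\Delta = -1$. The only minor variation is in this last step: you obtain $\Delta \geq 0$ by applying \eqref{eqn:modular} directly to the pair $(p \wedge q',\, p' \wedge q)$, whereas the paper argues the implication $p' \wedge q' = p' \wedge q \Rightarrow p \wedge q' = p \wedge q$ by intersecting with $p$.
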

\begin{proof}
	First note that 
	$r(p' \wedge q') - r(p' \wedge q) \in \{0,1\}$ 
	and $r(p \wedge q') - r(p \wedge q) \in \{0,1\}$.
	Indeed, suppose that $r(p' \wedge q') - r(p' \wedge q) > 0$. 
	Then $p' \wedge q' \not \preceq q$ and hence $(p' \wedge q') \vee q = q'$.
	By~(\ref{eqn:modular}), we have $r(p' \wedge q') + r(q) = r(q') + r(p' \wedge q)$, and 
	$r(p' \wedge q') - r(p' \wedge q) = r(q') - r(q) = 1$.
	
	Thus it suffices to consider the case of $p' \wedge q'= p' \wedge q$ 
	(i.e., $r(p' \wedge q') = r(p' \wedge q)$).
	Then $p \wedge p' \wedge q' = p \wedge p' \wedge q$ 
	implies $p \wedge q' = p \wedge q$ (i.e., $r(p' \wedge q') = r(p' \wedge q)$), as required. 
\end{proof}

In the case where ${\cal L}$ is complemented, 
a {\em base} is a set of $n$ atoms $a_1,a_2,\ldots,a_n$ 
with $a_1 \vee a_2 \vee \cdots \vee a_n = {\bf 1}$.
The sublattice $\langle a_1,a_2,\ldots, a_n \rangle$ 
generated by a base $\{a_1,a_2,\ldots,a_n\}$
is called a {\em frame}, which is isomorphic
to a Boolean lattice $2^{\{1,2,\ldots,n\}}$ 
by 
\[
2^{\{1,2,\ldots n\}} \ni X \mapsto \bigvee_{i \in X} a_i \in \langle a_1,a_2,\ldots, a_n \rangle.
\]
\begin{Lem}[{see e.g.,\cite{Gratzer}}]\label{lem:frame}
Let ${\cal C}$ and ${\cal D}$ be (maximal) chains in $\cal L$.
The sublattice generated by ${\cal C}$ and ${\cal D}$ is distributive.
If ${\cal L}$ is complemented, 
then there is a frame $\langle a_1,a_2,\ldots, a_n \rangle \subseteq {\cal L
}$ containing $\cal C$ and $\cal D$.
\end{Lem}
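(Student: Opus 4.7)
The plan is to handle the two assertions separately, treating distributivity first and then building the frame on top of it. Throughout, I will extend $\mathcal{C}$ and $\mathcal{D}$ to maximal chains $c_0 \prec c_1 \prec \cdots \prec c_n$ and $d_0 \prec d_1 \prec \cdots \prec d_n$ without loss of generality (this does not enlarge the generated sublattice if we already have maximal chains, and distributivity of a sublattice descends to any subset).

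For the distributivity of the sublattice $\langle \mathcal{C}, \mathcal{D}\rangle$, my plan is to exhibit a surjective lattice homomorphism onto it from a distributive lattice. Consider the distributive lattice $\mathcal{J}$ of order ideals of the product poset $\{0,\ldots,n\} \times \{0,\ldots,n\}$, and define $\varphi : \mathcal{J} \to \mathcal{L}$ by $\varphi(I) := \bigvee_{(i,j)\in I} (c_i \wedge d_j)$. Clearly $\varphi$ preserves joins. The nontrivial point is that $\varphi$ also preserves meets; I would prove this by repeated application of the modular law together with the fact that in a modular lattice, $c_i \wedge d_j$ and $c_{i'} \wedge d_{j'}$ are comparable whenever $(i,j) \leq (i',j')$ coordinatewise. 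Since $\langle \mathcal{C}, \mathcal{D}\rangle$ lies in the image of $\varphi$ (the chain elements themselves arise as $\varphi$ of the ideals $\{(i,j) : i \leq k\}$ and $\{(i,j) : j \leq k\}$), the generated sublattice is a homomorphic image of the distributive lattice $\mathcal{J}$, hence distributive. An equivalent route is to verify Birkhoff's criterion that no sublattice of $\langle \mathcal{C}, \mathcal{D}\rangle$ is isomorphic to $M_3$; but the homomorphism route is cleaner and makes the structure of generated elements explicit.

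For the frame assertion, assume $\mathcal{L}$ is complemented modular of rank $n$. I would proceed by induction on $n$. In the base case $n = 1$ the claim is trivial. For the inductive step, let $D$ denote the distributive sublattice $\langle \mathcal{C}, \mathcal{D}\rangle$ from the first part. Pick a common lower cover in $D$ of $\mathbf{1}$ — concretely, an element $p \prec_1 \mathbf{1}$ in $D$ — and choose an atom $a_n$ of $\mathcal{L}$ with $a_n \vee p = \mathbf{1}$ and $a_n \wedge p = \mathbf{0}$; such an atom exists because a complemented modular lattice is relatively complemented and atoms can be found below any nonzero element. I would then pass to the interval $[\mathbf{0}, p]$, which is again a complemented modular lattice of rank $n-1$, and into which the chains $\mathcal{C}$ and $\mathcal{D}$ can be projected by meeting with $p$; modularity guarantees these projected chains remain chains of full length $n-1$ inside $[\mathbf{0},p]$. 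By induction there is a frame $\langle a_1, \ldots, a_{n-1}\rangle \subseteq [\mathbf{0},p]$ containing the projected chains, and adjoining $a_n$ yields a frame of $\mathcal{L}$ containing $\mathcal{C}$ and $\mathcal{D}$.

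The main obstacle will be the inductive step of part two, specifically the selection of the element $p$ and the argument that the chains faithfully survive the projection to $[\mathbf{0}, p]$ while still being refinable into the inductively supplied frame. In particular, one must ensure the chosen $p$ contains enough of $\mathcal{C} \cup \mathcal{D}$ that the projections are nontrivial chains, and that after adjoining $a_n$ the resulting set of atoms is independent (joins to $\mathbf{1}$ and the generated sublattice is Boolean). Both points are handled by exploiting modularity: the interval $[\mathbf{0},p]$ meets every element $x$ of $\mathcal{L}$ in $x\wedge p$, with rank dropping by at most $1$, and the rank identity \eqref{eqn:modular} guarantees that $a_n$ together with a frame of $[\mathbf{0}, p]$ forms an independent set of $n$ atoms. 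The distributivity established in part one is what permits $p$ to be chosen compatibly with both chains simultaneously.
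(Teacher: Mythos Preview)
Your inductive plan for the frame has a genuine gap: choosing $a_n$ to be an \emph{arbitrary} complement of $p$ does not ensure that the original chain $\mathcal{D}$ lies in $\langle a_1,\dots,a_n\rangle$. Take $\mathcal{L}$ to be the subspace lattice of ${\bf F}^2$, $\mathcal{C}=\{0,\langle e_1\rangle,{\bf F}^2\}$, $\mathcal{D}=\{0,\langle e_2\rangle,{\bf F}^2\}$. With $p=c_1=\langle e_1\rangle$ the inductive step gives $a_1=\langle e_1\rangle$; if you now set $a_2=\langle e_1+e_2\rangle$ (a valid complement of $p$), the frame $\langle a_1,a_2\rangle$ does not contain $\langle e_2\rangle$. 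Your final paragraph flags independence of the atoms as the obstacle, but that part is easy; the real issue is recovering $\mathcal{D}$ from its projection $\mathcal{D}\wedge p$, and this forces the choice of $a_n$. The fix is to pick $a_n$ as an atom below $d_j$ and not below $p$, where $j$ is the unique index with $d_{j-1}\preceq p$ and $d_j\not\preceq p$ (such an atom exists by relative complementation in $[{\bf 0},d_j]$); then for every $j'\geq j$ one has $d_{j'}=(d_{j'}\wedge p)\vee a_n$ by a rank count, and the induction goes through.

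For comparison, the paper does not argue inductively. It constructs all atoms simultaneously by first proving (via the monotone rank-difference function of Lemma~\ref{lem:monotone}) that there is a bijection $\sigma$ on $\{1,\dots,n\}$ with $p_{i-1}\wedge q_{\sigma(i)-1}\prec_1 p_i\wedge q_{\sigma(i)}$ for each $i$, a Jordan--H\"older correspondence between the two chains; each $a_i$ is then chosen inside that covering. This direct route makes the polynomial-time claim immediate, since $\sigma$ is determined by rank computations. Your repaired induction is also polynomial and is arguably more elementary, but you must make the dependence of $a_n$ on \emph{both} chains explicit. (The distributivity half is not proved in the paper---it is cited from Gr\"atzer---and your order-ideal homomorphism sketch is the standard route; just be aware that the meet-preservation of $\varphi$ is where the modular law is actually used and deserves a line of justification.)
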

A complemented modular lattice is viewed as a {\em spherical building of type A}~\cite{BuildingBook} 
The latter property of this lemma features the axiom of building, and 
is particularly important for us; we provide a proof based on~\cite[Section 4.3]{BuildingBook}. 
	\begin{figure}[t]
		\begin{center}
			\includegraphics[scale=0.6]{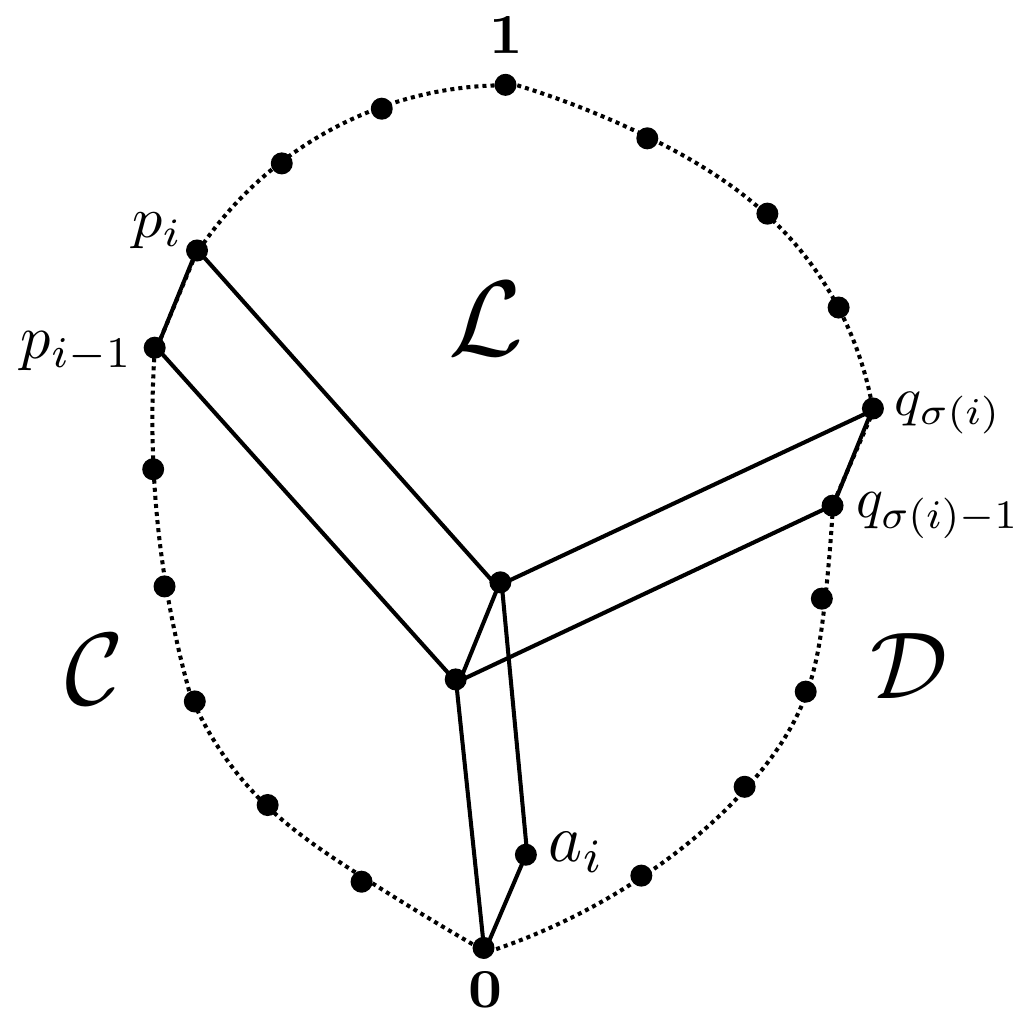}
			\caption{Construction of a frame including two chains ${\cal C}$ and ${\cal D}$}
			\label{fig:apartment}
		\end{center}
	\end{figure}\noindent
\begin{proof}
	Suppose that ${\cal C} = ({\bf 0} = p_0 \prec p_1 \prec \cdots \prec p_n = {\bf 1})$
	and ${\cal D} = ({\bf 0} = q_0 \prec q_1 \prec \cdots \prec q_n = {\bf 1})$.
	We first show 
	\begin{Clm}
		There exists a bijection $\sigma$ on $\{1,2,\ldots,n\}$ such that
		$p_{i-1} \wedge q_{\sigma(i)-1} \prec_1 p_i \wedge q_{\sigma(i)}$ for each $i$.
	\end{Clm}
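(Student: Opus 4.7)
The plan is to extract the bijection $\sigma$ from the $\{0,1\}$-valued ``mixed difference'' quantity provided by Lemma~\ref{lem:monotone}, and then check bijectivity by a telescoping argument applied symmetrically to rows and columns of the $n \times n$ grid of meets $p_i \wedge q_j$.

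First I would define, for $1 \leq i,j \leq n$,
\[
\delta(i,j) := r(p_i \wedge q_j) - r(p_{i-1} \wedge q_j) - r(p_i \wedge q_{j-1}) + r(p_{i-1} \wedge q_{j-1}).
\]
Since ${\cal C}$ and ${\cal D}$ are maximal chains, each covering relation is a rank-$1$ jump, so Lemma~\ref{lem:monotone} (applied with $p=p_{i-1}$, $p'=p_i$, $q=q_{j-1}$, $q'=q_j$) gives $\delta(i,j) \in \{0,1\}$. Next, I would fix $i$ and telescope the sum $\sum_{j=1}^n \delta(i,j)$: it equals $[r(p_i \wedge q_n) - r(p_{i-1} \wedge q_n)] - [r(p_i \wedge q_0) - r(p_{i-1} \wedge q_0)] = r(p_i) - r(p_{i-1}) - 0 = 1$. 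Consequently, for each $i$ there is a unique index $\sigma(i) \in \{1,\ldots,n\}$ with $\delta(i,\sigma(i)) = 1$ (and $\delta(i,j)=0$ otherwise).

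Bijectivity follows by the symmetric telescoping: for each fixed $j$, $\sum_{i=1}^n \delta(i,j) = r(q_j) - r(q_{j-1}) = 1$, so there is also a unique $\tau(j)$ with $\delta(\tau(j),j) = 1$. Because $\delta(i,j)=1$ is equivalent to both $j=\sigma(i)$ and $i=\tau(j)$, the maps $\sigma$ and $\tau$ are mutually inverse, hence $\sigma$ is a bijection on $\{1,2,\ldots,n\}$.

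Finally, I would verify that $\delta(i,\sigma(i))=1$ actually encodes $p_{i-1} \wedge q_{\sigma(i)-1} \prec_1 p_i \wedge q_{\sigma(i)}$. Since both row and column increments of $r(p_{\bullet}\wedge q_{\bullet})$ lie in $\{0,1\}$, the only way $\delta$ equals $1$ is that $r(p_i \wedge q_{\sigma(i)}) - r(p_i \wedge q_{\sigma(i)-1}) = 1$ and $r(p_{i-1}\wedge q_{\sigma(i)}) - r(p_{i-1}\wedge q_{\sigma(i)-1}) = 0$ (together with the analogous column split). The second equality forces $p_{i-1}\wedge q_{\sigma(i)-1} = p_{i-1}\wedge q_{\sigma(i)}$, and combining with $p_{i-1}\wedge q_{\sigma(i)-1} \preceq p_i \wedge q_{\sigma(i)}$ together with the rank increment $1$, the covering relation $\prec_1$ follows.

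The only subtlety, which I would expect to be the main (and actually quite mild) obstacle, is confirming that the value $\delta(i,j) \in \{0,1\}$ delivered by Lemma~\ref{lem:monotone} is consistent with its ``row'' interpretation \emph{and} its ``column'' interpretation; this is immediate from the symmetry of the expression in $(p,q)$, but it is what makes the double-telescoping argument (and hence the bijectivity of $\sigma$) go through cleanly.
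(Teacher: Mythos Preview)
Your proposal is correct and follows essentially the same route as the paper: both arguments feed the covering relations $p_{i-1}\prec_1 p_i$, $q_{j-1}\prec_1 q_j$ into Lemma~\ref{lem:monotone}, extract for each $i$ a unique index $\sigma(i)$ at which the ``jump'' occurs, and then invoke the $i\leftrightarrow j$ symmetry of the situation to conclude bijectivity. The only cosmetic difference is that the paper phrases uniqueness via the monotonicity clause of Lemma~\ref{lem:monotone} (the map $u\mapsto r(p_i\wedge u)-r(p_{i-1}\wedge u)$ rises from $0$ to $1$), whereas you package the same content as a row/column telescoping of the $\{0,1\}$-valued quantity $\delta(i,j)$; the derivation of the covering $p_{i-1}\wedge q_{\sigma(i)-1}\prec_1 p_i\wedge q_{\sigma(i)}$ is likewise the same computation in both writeups.
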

	Assume the claim.
	By complementarity, for each $i$, 
	we can choose an atom $a_i$ such that 
	$(p_{i-1} \wedge q_{\sigma(i)-1}) \vee a_i = p_{i} \wedge q_{\sigma(i)}$.
	Then it holds
	$p_{i-1} \vee a_i = p_i$ and $q_{\sigma(i)-1} \vee a_i = q_{\sigma(i)}$.
	Consequently, all $p_i$ and $q_i$ are represented as joins of $a_1,a_2,\ldots,a_n$. See Figure~\ref{fig:apartment} for intuition.
	
	We prove the claim.
	By Lemma~\ref{lem:monotone}, 
	for each $i \in \{1,2,\ldots,n\}$ there uniquely exists $j \in \{1,2,\ldots,n\}$
	such that
	\begin{equation}\label{eqn:unique1}
	r(p_i \wedge q_j) - r(p_{i-1} \wedge q_j) = 1,\ r(p_{i} \wedge q_{j-1}) - r(p_{i-1} \wedge q_{j-1}) = 0. 
	\end{equation}
	In particular, it holds that
	\begin{equation}\label{eqn:preceq}
	p_i \wedge q_{j-1} = p_{i-1} \wedge q_{j-1} \preceq p_{i-1} \wedge q_j \prec_1 p_i \wedge q_j.
	\end{equation}
	Here $p_i \wedge q_j \not \preceq q_{j-1}$ must hold; 
	if $p_i \wedge q_j \preceq q_{j-1}$, then 
	$p_i \wedge q_j = p_i \wedge p_i \wedge q_j \preceq p_i \wedge q_{j-1}$, which contradicts (\ref{eqn:preceq}).
	Thus $(p_i \wedge q_j) \vee q_{j-1} = q_j$, implying 
	$p_i \wedge q_{j-1} \prec_1 p_i \wedge q_j$ (by (\ref{eqn:modular})). 
	By (\ref{eqn:preceq}), 
	it necessarily holds that 
	$p_{i-1} \wedge q_{j-1} = p_{i-1} \wedge q_j  \prec_1 p_i \wedge q_{j}$.

    Thus we can define the map $\sigma$
	by associating $i$ with $\sigma(i) := j$ with property (\ref{eqn:unique1}).
	This map is injective, and hence bijective. Indeed, 
	by (\ref{eqn:unique1}), we have 
	$r(p_i \wedge q_j) - r(p_{i} \wedge q_{j-1}) - r(p_{i-1} \wedge q_j) + r(p_{i-1} \wedge q_{j-1}) = 1$, and
	\begin{equation*}
	r(p_i \wedge q_j) - r(p_{i} \wedge q_{j-1}) = 1,\  
	r(p_{i-1} \wedge q_{j}) - r(p_{i-1} \wedge q_{j-1}) = 0.
	\end{equation*}
	This means that interchanging the roles of $i,j$ yields the inverse map of $\sigma$.
\end{proof} 
Suppose that ${\cal L}$ is the lattice of all vector subspaces of a vector space, 
and that we are given two chains ${\cal C}$ and ${\cal D}$ of vector subspaces, 
where each subspace $X$ in the chains is given by a matrix $A$ with ${\rm Im} A = X$ 
or/and a matrix $B$ with $\ker B = X$.
The above proof can be implemented via rank computation/Gaussian elimination, 
and obtain vectors  $a_1,a_2,\ldots,a_n$ 
with ${\cal C}, {\cal D} \subseteq \langle a_1,a_2,\ldots, a_n \rangle$
in polynomial time.

Let $U$ and $V$ be vector spaces of dimension $m$ and $n$, respectively, 
and let $A: U \times V \to {\bf F}$ be a bilinear form.
Let ${\cal L}$ and ${\cal M}$ be the lattices of 
all vector subspaces of $U$ and of $V$, respectively.
Consider the opposite $\check{\cal M}$.
Define $R = R^A: {\cal L} \times \check{\cal M} \to \ZZ$ by
\begin{equation}\label{eqn:R}
R(X,Y) := \rank A|_{X \times Y} \quad ((X,Y) \in {\cal L} \times \check{\cal M}),
\end{equation}
where $A|_{X \times Y}: X \times Y \to {\bf F}$ is 
the restriction of $A$ to $X \times Y$, and $\rank$ is 
the rank of the matrix representation.
Then $R$ is submodular; 
an equivalent statement is in \cite[Lemma 4.2]{IwataMurota95}. 
\begin{Lem}\label{lem:R_is_submo}
	For $(X,Y),(X',Y') \in {\cal L} \times {\cal M}$, it holds
	\begin{equation}\label{eqn:submo_R}
	R(X,Y) + R(X',Y') \geq R(X \cap X', Y + Y') + R(X + X', Y \cap Y').
	\end{equation}
	Thus $R$
	is a submodular function on ${\cal L} \times \check{\cal M}$.
\end{Lem}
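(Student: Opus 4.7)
The first step is to reduce the rank term to a dimension formula involving an annihilator subspace. For $Y \in {\cal M}$, write $Y^{\bot} := \{u \in U \mid A(u,v) = 0 \text{ for all } v \in Y\} \in {\cal L}$; symmetrically, $X^{\bot} := \{v \in V \mid A(u,v) = 0 \text{ for all } u \in X\} \in {\cal M}$ for $X \in {\cal L}$. Viewing $A|_{X \times Y}$ as a linear map $X \to Y^{*}$ sending $x$ to $A(x,\cdot)|_Y$, its kernel is exactly $X \cap Y^{\bot}$. Hence
\begin{equation*}
R(X,Y) \;=\; \dim X - \dim (X \cap Y^{\bot}).
\end{equation*}

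Next, I would substitute this formula into the four terms of (\ref{eqn:submo_R}) and split the difference into two pieces. The $\dim X$ part gives $\dim X + \dim X' - \dim (X\cap X') - \dim(X+X')$, which vanishes by the modular dimension identity~(\ref{eqn:dim}) applied in ${\cal L}$. So the inequality reduces to
\begin{equation*}
\dim\bigl((X\cap X') \cap (Y+Y')^{\bot}\bigr) + \dim\bigl((X+X') \cap (Y \cap Y')^{\bot}\bigr) \;\geq\; \dim(X \cap Y^{\bot}) + \dim(X' \cap Y'^{\bot}).
\end{equation*}

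Now I would apply the two standard annihilator relations $(Y+Y')^{\bot} = Y^{\bot} \cap Y'^{\bot}$ and $(Y \cap Y')^{\bot} \supseteq Y^{\bot} + Y'^{\bot}$ (both are immediate from the definition; only the inclusion is needed, no non-degeneracy is required). Setting $P := X \cap Y^{\bot}$ and $Q := X' \cap Y'^{\bot}$, the left-hand side becomes at least
\begin{equation*}
\dim(P \cap Q) + \dim(P+Q),
\end{equation*}
since $(X\cap X') \cap Y^{\bot} \cap Y'^{\bot} = P \cap Q$ and $P + Q \subseteq (X+X') \cap (Y^{\bot} + Y'^{\bot})$. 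A final application of the modular dimension identity~(\ref{eqn:dim}) in ${\cal L}$ gives $\dim(P \cap Q) + \dim(P+Q) = \dim P + \dim Q$, which is exactly the right-hand side. This closes the inequality, proving both (\ref{eqn:submo_R}) and the submodularity of $R$ on ${\cal L} \times \check{\cal M}$ (noting that meet in ${\cal L} \times \check{\cal M}$ is $(\cap, +)$ and join is $(+, \cap)$, matching the terms on the right of~(\ref{eqn:submo_R})).

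The only step that requires a moment of thought is the passage from the two-subspace inclusion $P+Q \subseteq (X+X') \cap (Y^{\bot}+Y'^{\bot})$ to a usable lower bound; the point is that we keep $P+Q$ rather than the larger intersection, because the dimension identity pairs it exactly with $\dim(P\cap Q)$. Everything else is a mechanical bookkeeping of the dimension formula. No additional technology beyond~(\ref{eqn:dim}) and the basic annihilator identities is needed.
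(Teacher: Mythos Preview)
Your proof is correct, and it takes a genuinely different route from the paper's own argument. The paper invokes Lemma~\ref{lem:frame} to find a common frame $\langle a_1,\ldots,a_m\rangle$ of ${\cal L}$ containing $X,X',X\cap X',X+X'$ and a common frame of ${\cal M}$ containing $Y,Y',Y\cap Y',Y+Y'$; in those bases each of the four ranks becomes the rank of a submatrix $A[I,J]$, and the inequality reduces to the classical matrix rank inequality $\rank A[I,J]+\rank A[I',J'] \geq \rank A[I\cap I',J\cup J'] + \rank A[I\cup I',J\cap J']$, quoted from Murota's book.

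Your approach instead establishes $R(X,Y)=\dim X-\dim(X\cap Y^{\bot})$ directly (this is the content of what later appears as Lemma~\ref{lem:formula_R}) and then proves the inequality by pure dimension bookkeeping: the $\dim X$ parts cancel by~(\ref{eqn:dim}), and with $P=X\cap Y^{\bot}$, $Q=X'\cap Y'^{\bot}$ the remaining terms are bounded via $P\cap Q=(X\cap X')\cap(Y+Y')^{\bot}$ and $P+Q\subseteq(X+X')\cap(Y\cap Y')^{\bot}$, finishing with a second use of~(\ref{eqn:dim}). This is more self-contained---it needs neither the frame construction nor an external citation---at the cost of foreshadowing the annihilator formula that the paper only records later. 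The paper's route, on the other hand, makes the connection to the standard submatrix rank inequality explicit, which fits its broader theme of moving between the modular lattice and frame (Boolean) coordinates.
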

\begin{proof}
	By Lemma~\ref{lem:frame}, 
	there is a base $\{ a_1,a_2,\ldots,a_m\}$ of ${\cal L}$ 
	with $X, X', X \cap X', X + X' \subseteq \langle a_1,a_2,\ldots,a_m \rangle$,  
	and there is  
	a base $\{ b_1,b_2,\ldots,b_n\}$ of ${\cal M}$ 
	with $Y, Y', Y \cap Y', Y + Y' \subseteq \langle b_1,b_2,\ldots,b_n \rangle$.
	Consider the matrix representation $A = (a_{ij})$ 
	with respect to these bases, i.e., $a_{ij} := A(a_i,b_j)$.
	For $I \subseteq \{1,2,\ldots, m\}$ and $J \subseteq \{1,2,\ldots,n\}$, 
	let $A[I,J] := (a_{ij}: i \in I,j \in J)$ 
	be the submatrix of $A$ with row set $I$ and column set $J$.
	Then (\ref{eqn:submo_R}) follows from 
	the  well-known rank inequality
	\begin{equation*}
	\rank A[I,J] + \rank A[I',J'] \geq \rank A[I \cap I',J \cup J'] 
	   + \rank A[I \cup I',J \cap J']
	\end{equation*}
    for $I,I' \subseteq \{1,2,\ldots,m\}$ and $J,J' \subseteq \{1,2,\ldots,n\}$; 
    see~\cite[Proposition 2.1.9]{MurotaBook}.
\end{proof}

\subsubsection{Orthoscheme complex} 
The $n$-dimensional {\em orthoscheme} is the simplex in $\RR^n$ with vertices
\[
0, e_1, e_1 + e_2, e_1 + e_2 + e_3, \ldots, e_1 + e_2 + \cdots + e_n,
\]
where $e_i$ is the $i$th unite vector; 
see Figure~\ref{fig:ortho} for the $3$-dimensional orthoscheme.
	\begin{figure}[t]
		\begin{center}
			\includegraphics[scale=0.7]{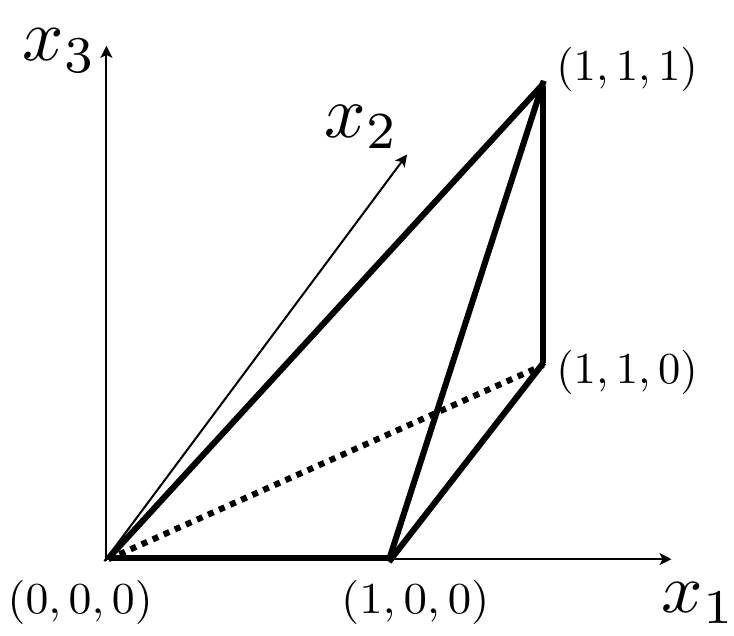}
			\caption{Orthoscheme}
			\label{fig:ortho}
		\end{center}
	\end{figure}\noindent
An orthoscheme complex, introduced by Brady and McCammond~\cite{BM10} in the context of geometric group theory,  
is a metric simplicial complex obtained by gluing orthoschemes.
Let ${\cal L}$ be a modular lattice of rank $n$.
Let $F({\cal L})$ be the free $\RR$-module over ${\cal L}$, 
i.e., the set of formal (finite) linear combinations
$x = \sum_{p \in {\cal L}} \lambda(p) p$ 
such that each coefficient $\lambda(p)$ is in  $\RR$
and the set of elements $p$ with nonzero coefficient, which we call  
the {\em support} of $x$, 
is finite.
Let $K({\cal L})$ 
be the subset of elements 
$x = \sum_{p \in {\cal L}} \lambda(p) p \in F({\cal L})$
such that $\lambda(p) \geq 0$ for $p \in {\cal L}$, $\sum_{p \in {\cal L}} \lambda(p) = 1$, and the support of $x$ 
is a chain of ${\cal L}$.
Namely $K({\cal L})$ is the geometric realization of the order complex of ${\cal L}$.
The subset of $K({\cal L})$ consisting of formal combinations of some chain ${\cal  C}$
is called a {\em simplex} of $K({\cal L})$.
For a maximal simplex $\sigma$ corresponding to 
a maximal chain ${\cal C} = p_0 \prec p_1 \prec \cdots \prec p_n$, 
define a map $\varphi_{\sigma}$ from $\sigma$ to 
the $n$-dimensional orthoscheme
by
\begin{equation*}\label{eqn:phi_sigma}
\varphi_{\sigma}(x) = \sum_{i=1}^{n} \lambda_i (e_1 + e_2 + \cdots + e_{i}) 
\quad (x = \sum_{i=0}^n \lambda_i p_i \in \sigma).
\end{equation*}
Then a metric $d_{\sigma}$ on each simplex $\sigma$ is defined by
\begin{equation}\label{eqn:dsigma}
d_{\sigma}(x,y) := \| \varphi_\sigma (x) - \varphi_\sigma (y) \|_2 \quad (x,y \in \sigma). 
\end{equation}
The length of a path $\gamma: [0,1] \to K({\cal L})$
is defined as $\sup \sum_{i=0}^{m-1} d_{\sigma_i}(\gamma(t_i),\gamma(t_{i+1}))$, 
where the $\sup$ is taken over all
$0 = t_0 < t_1 < t_2 < \cdots < t_m = 1$ $(m \geq 1)$ 
such that $\gamma([t_i,t_{i+1}])$ belongs to a simplex $\sigma_i$ for each $i$.
The metric on $K({\cal L})$ is (well-)defined as above.
The resulting metric space $K({\cal L})$ 
is called the {\em orthoscheme complex} of ${\cal L}$.
Then $K({\cal L})$ is a complete 
geodesic metric space (by Bridson's theorem~\cite[Theorem 7.19]{BrHa}).
\begin{Thm}[\cite{CCHO,HKS17}]
Let ${\cal L}$ be a modular lattice of rank $n$.
The orthoscheme complex $K({\cal L})$ is 
a complete CAT(0)-space.
\end{Thm}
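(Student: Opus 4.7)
The plan is to apply the Cartan--Hadamard theorem for piecewise Euclidean complexes: a complete, simply connected, locally CAT(0) piecewise Euclidean complex with only finitely many isometry types of cells is globally CAT(0). Completeness comes for free from Bridson's theorem \cite[Theorem 7.19]{BrHa}, since only the Euclidean orthoschemes of dimensions $0,1,\ldots,n$ occur as shapes. Simple connectedness is in fact replaced by contractibility: every maximal chain begins at ${\bf 0}$, so $K({\cal L})$ is a cone with apex ${\bf 0}$.

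The substance is therefore local CAT(0), which by Gromov's link condition amounts to showing that the link $\mathrm{Lk}(p, K({\cal L}))$ at every vertex $p \in {\cal L}$ is CAT(1) as a piecewise spherical complex. The orthoscheme geometry plays a crucial role here: at the intermediate vertex $e_1+\cdots+e_i$ of the standard orthoscheme, the edges running to lower vertices lie in $\mathrm{span}(e_1,\ldots,e_i)$ and the edges running to higher vertices lie in $\mathrm{span}(e_{i+1},\ldots,e_n)$, which are orthogonal. This right angle propagates to the global link and yields a spherical-join decomposition
\[
\mathrm{Lk}(p,K({\cal L})) \;=\; L^{-}(p) \,\ast\, L^{+}(p),
\]
where $L^{-}(p)$ and $L^{+}(p)$ are piecewise-spherical complexes of spherical orthoschemes indexed by chains in $[{\bf 0},p)$ and in $(p,{\bf 1}]$ respectively. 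Because a spherical join of two CAT(1) spaces is CAT(1), the link condition reduces to CAT(1) of these spherical orthoscheme complexes over modular lattices of strictly smaller rank.

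I would then run a simultaneous induction on the rank, proving in parallel that (a) $K({\cal L})$ is CAT(0) and (b) the analogous piecewise-spherical orthoscheme complex of every modular lattice of rank $\le n$ is CAT(1). Statement (a) at rank $n$ follows from (b) at ranks $<n$ by the argument just given. For (b) local CAT(1) again reduces via the same orthogonal join decomposition to lower ranks, so the real content is a systole bound: no locally geodesic closed loop of length less than $2\pi$. This is where the combinatorics of modular lattices enters essentially. A candidate short loop meets only finitely many maximal chains, and repeated application of the distributivity part of Lemma~\ref{lem:frame} to consecutive pairs should confine the entire loop into a single finite distributive sublattice. The orthoscheme complex of a finite distributive lattice can be identified with the order polytope of a finite poset, a convex subpolytope of the Euclidean cube $[0,1]^n$; its spherical analogue is a subcomplex of a round sphere, which admits no closed geodesic of length $<2\pi$, producing the required contradiction.

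The main obstacle is making this confinement step rigorous, i.e.\ showing that every candidate short loop can actually be realized inside a single distributive sublattice without gaining length. This is the geometric heart of the arguments in \cite{CCHO,HKS17}, and is where the building-like structure of a modular lattice does real work; it is considerably more delicate than the purely formal link-decomposition step. The base cases are elementary (rank $0$: a point; rank $1$: a segment; rank $2$: the ``booklet'' of right triangles in Figure~\ref{fig:folder}, which is CAT(0) by direct inspection), so once the confinement argument is in hand the simultaneous induction closes and Cartan--Hadamard delivers global CAT(0) for $K({\cal L})$.
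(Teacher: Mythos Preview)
The paper does not prove this theorem at all; it simply quotes it from the references \cite{CCHO,HKS17} and uses it as a black box. So there is no ``paper's own proof'' to compare against. Your outline is a reasonable summary of the strategy actually used in those references: reduce via Cartan--Hadamard and Gromov's link condition to CAT(1) of spherical orthoscheme links, split links as orthogonal joins $L^{-}(p)\ast L^{+}(p)$, and induct on rank. You have identified correctly that the real work lies in the systole bound for the spherical pieces, and that modularity enters through a confinement argument.

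That said, as written this is a plan and not a proof. The step ``repeated application of the distributivity part of Lemma~\ref{lem:frame} to consecutive pairs should confine the entire loop into a single finite distributive sublattice'' is exactly the nontrivial core, and you have only asserted it. Lemma~\ref{lem:frame} gives a distributive sublattice for \emph{two} chains; extending this to a whole gallery along a closed geodesic without increasing length requires a genuine geometric argument (in \cite{HKS17} this is handled via a careful analysis of galleries and local developments, and in \cite{CCHO} via the theory of weakly modular graphs and folder complexes). Also note that your identification of the spherical orthoscheme complex of a distributive lattice with ``a subcomplex of a round sphere'' is too quick: the order polytope is a convex Euclidean body, but its spherical counterpart is the link of a vertex in a simplicial cone, and one still has to check that this spherical piece is convex enough to exclude short closed geodesics. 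So your proposal is on the right track and matches the literature's approach, but the acknowledged ``main obstacle'' is precisely where a full proof is still owed.
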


\begin{Lem}[\cite{BM10,CCHO}]\label{lem:product}
	Let ${\cal L}$ and ${\cal M}$ be modular lattices.
	Define a metric $d$ on $K({\cal L}) \times K({\cal M})$ by
	\[
	d((x,y),(x',y')) := \sqrt{ d(x,x')^2  + d(y,y')^2} \quad ((x,y),(x',y') \in K({\cal L}) \times K({\cal M})).
	\]
	Then $K({\cal L}) \times K({\cal M})$ is isometric to $K({\cal L} \times {\cal M})$, where
the isometry $\phi: K({\cal L}) \times K({\cal M}) \to K({\cal L} \times {\cal M})$ is given by the following algorithm:
\begin{description}
	\item[Input:] $(x,y) \in K({\cal L}) \times K({\cal M})$.
	\item[Output:] 
	$z = \phi(x,y) \in K({\cal L} \times {\cal M})$.
	\item[0:] Let $z := 0$
	\item[1:] If $(x,y) = (0,0)$, then return $z$.
	\item[2:] Choose the maximum element $p$ from the support of $x$ and the maximum element $q$ from the support of $y$. 
	\item[3:] Let $\lambda$ be the minimum of the coefficient of $p$ in $x$ and that  
	of $q$ in~$y$. 
	Let $x \leftarrow x - \lambda p$, $y \leftarrow y - \lambda q$, 
	and $z \leftarrow z + \lambda (p,q)$. Go to {\bf 1}.
\end{description}
\end{Lem}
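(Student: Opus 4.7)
The strategy is to reduce both sides to Euclidean pictures via the chart maps $\varphi_\sigma$ and to exhibit $\phi$ as a piecewise coordinate permutation, built on the classical shuffle decomposition of a product of two orthoschemes. First, I would verify that $\phi$ is well-defined: the algorithm terminates after at most $|\supp(x)|+|\supp(y)|-1$ iterations because each step eliminates at least one element from the support of $x$ or of $y$, and a straightforward induction shows that the invariant ``sum of coefficients of $x$ equals sum of coefficients of $y$'' is preserved (so $z$ ends with total coefficient $1$) and that the successive pairs $(p,q)$ selected form a decreasing chain in ${\cal L} \times {\cal M}$. Hence $\phi(x,y) \in K({\cal L} \times {\cal M})$.

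Next, fix maximal chains ${\cal C} = (p_0 \prec p_1 \prec \cdots \prec p_m)$ of ${\cal L}$ and ${\cal D} = (q_0 \prec q_1 \prec \cdots \prec q_n)$ of ${\cal M}$, and consider $(x,y) \in \sigma_{\cal C} \times \sigma_{\cal D}$. Writing $x = \sum_i \lambda_i p_i$, $y = \sum_j \mu_j q_j$, and setting $\alpha_k := \sum_{i \geq k} \lambda_i$, $\beta_l := \sum_{j \geq l} \mu_j$, the definition of $\varphi_{\sigma}$ gives
\[
\varphi_{\sigma_{\cal C}}(x) = (\alpha_1, \ldots, \alpha_m), \qquad \varphi_{\sigma_{\cal D}}(y) = (\beta_1, \ldots, \beta_n),
\]
with $1 \geq \alpha_1 \geq \cdots \geq \alpha_m \geq 0$ and $1 \geq \beta_1 \geq \cdots \geq \beta_n \geq 0$. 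The product simplex $\sigma_{\cal C} \times \sigma_{\cal D}$ thereby identifies isometrically with the product of two orthoschemes in $\RR^{m+n}$.

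The key classical fact I would invoke is that this product of orthoschemes is tiled by $\binom{m+n}{m}$ copies of the $(m+n)$-dimensional orthoscheme, one for each shuffle of $(\alpha_1, \ldots, \alpha_m)$ with $(\beta_1, \ldots, \beta_n)$: each shuffle chamber is the region in which the $\alpha$'s and $\beta$'s interleave in a fixed pattern, and merging them into a single nonincreasing sequence $\gamma \in \RR^{m+n}$ is a coordinate permutation, hence a Euclidean isometry onto a standard $(m+n)$-orthoscheme. On the lattice side, shuffles of ${\cal C}$ and ${\cal D}$ are in bijection with maximal chains ${\cal E}$ of ${\cal L} \times {\cal M}$ going from $(\mathbf{0},\mathbf{0})$ to $(p_m,q_n)$. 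I would verify that the algorithm $\phi$ implements this bijection: at each iteration the ``peeling'' step compares the top coefficient of $x$ with the top coefficient of $y$, which corresponds to comparing the smallest nonzero gap in the $\alpha$-sequence with that in the $\beta$-sequence, and chooses the next vertex of the shuffle accordingly. By induction on the number of iterations, $\phi(x,y)$ lies in $\sigma_{\cal E}$ with $\varphi_{\sigma_{\cal E}}(\phi(x,y)) = \gamma$, the sorted sequence. Letting ${\cal C}, {\cal D}$ vary (and allowing non-maximal chains via taking sub-simplices), one sees that $\phi$ is a bijection onto $K({\cal L} \times {\cal M})$.

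Finally, on each shuffle chamber $\phi$ is an Euclidean isometry (a coordinate permutation), so $\phi$ is a piecewise isometry. Since the metrics on both sides are the path metrics induced by the Euclidean metrics on simplices, and since piecewise isometries preserve lengths of paths, $\phi$ is a global isometry. The main delicate point is boundary compatibility between adjacent shuffle chambers: when $\alpha_k = \beta_l$ for some $k,l$ two shuffles become indistinguishable, and one must check that $\phi$ assigns the same image regardless of which order the algorithm processes $p_k$ and $q_l$. This follows because in that tie case the step adds $\lambda (p,q)$ with the same $(p,q)$-block of coefficient mass to $z$ either way, so $\phi$ is continuous on the simplex boundary. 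This is the main technical obstacle; once settled, the piecewise-isometry-implies-isometry argument finishes the proof.
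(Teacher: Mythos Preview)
The paper does not prove this lemma; it is cited from \cite{BM10,CCHO} without argument. Your sketch via the shuffle decomposition of a product of orthoschemes is the standard proof and is correct in outline. One small clarification on your ``main technical obstacle'': the algorithm is deterministic --- at a tie both top elements are peeled off simultaneously, so there is no order ambiguity to resolve. The genuine content of the boundary check is continuity of $\phi$ across shuffle-chamber walls, and that is immediate once you note that the merged nonincreasing sequence $\gamma$ (hence $\varphi_{\sigma_{\cal E}}(\phi(x,y))$) depends continuously on $(\alpha,\beta)$.
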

The orthoscheme complex of a Boolean lattice is a Euclidean cube as follows, 
where $1_{X} \in \{0,1\}^n$ is the characteristic vector 
of $X \subseteq \{1,2,\ldots,n\}$ defined by 
$(1_{X})_i = 1 \Leftrightarrow i \in X$. 
\begin{Lem}[\cite{BM10,CCHO}]\label{lem:Boolean}
Let ${\cal L}$ be a Boolean lattice $2^{\{1,2,\ldots,n\}}$.
The orthoscheme complex $K({\cal L})$ is isometric 
to the $n$-cube $[0,1]^n$ in $\RR^n$, where an isometry is given by
\begin{equation}\label{eqn:isometry}
x = \sum_{i} \lambda_i X_i
\mapsto  \sum_{i} \lambda_i 1_{X_i}.
\end{equation}
\end{Lem}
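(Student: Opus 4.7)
The plan is to exhibit the proposed map as a simplexwise isometry between the orthoscheme complex $K(2^{\{1,\ldots,n\}})$ and the standard Coxeter subdivision of $[0,1]^n$, and then conclude globally by the fact that both metrics are the length metrics induced by their piecewise Euclidean structures.

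First I would parameterize the maximal simplices. A maximal chain in $2^{\{1,\ldots,n\}}$ has the form ${\cal C}_\pi = (\emptyset = P_0 \prec P_1 \prec \cdots \prec P_n = \{1,\ldots,n\})$ with $P_i = \{\pi(1),\ldots,\pi(i)\}$ for a unique permutation $\pi$ of $\{1,\ldots,n\}$. Let $\sigma_\pi$ denote the corresponding simplex of $K({\cal L})$. For $x = \sum_{i=0}^n \lambda_i P_i \in \sigma_\pi$, a direct calculation shows that the image $\Phi(x) := \sum_i \lambda_i 1_{P_i}$ satisfies $\Phi(x)_{\pi(j)} = \sum_{i \geq j} \lambda_i$ for $j=1,\ldots,n$. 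Comparing with $\varphi_{\sigma_\pi}(x) = \sum_{i=1}^n \lambda_i(e_1 + \cdots + e_i)$, whose $j$-th coordinate is the same sum $\sum_{i\geq j}\lambda_i$, I see that $\Phi|_{\sigma_\pi}$ differs from $\varphi_{\sigma_\pi}$ only by the coordinate permutation $\pi$. Since coordinate permutations are Euclidean isometries, $\Phi$ restricted to $\sigma_\pi$ is an isometry onto its image, and that image is precisely $\{y \in [0,1]^n : y_{\pi(1)} \geq y_{\pi(2)} \geq \cdots \geq y_{\pi(n)}\}$.

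Next I would check that $\Phi$ is a well-defined bijection onto $[0,1]^n$. Well-definedness on overlaps of simplices is automatic because the formula $\Phi(x) = \sum_i \lambda_i 1_{X_i}$ depends only on the formal combination $x$ and not on the ambient maximal chain. For surjectivity and injectivity, given any $y \in [0,1]^n$, sort its coordinates to get a permutation $\pi$ with $y_{\pi(1)} \geq \cdots \geq y_{\pi(n)}$, and recover the unique nonnegative coefficients $\lambda_0 = 1 - y_{\pi(1)}$, $\lambda_i = y_{\pi(i)} - y_{\pi(i+1)}$ for $1 \leq i < n$, and $\lambda_n = y_{\pi(n)}$, which sum to $1$. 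This produces a preimage in $\sigma_\pi$, and uniqueness follows from the uniqueness of the sorting on the interior of each Coxeter chamber together with the agreement of $\Phi$ on common faces.

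Finally I would upgrade the simplexwise isometry to a global isometry. The metric on $K({\cal L})$ is defined as the infimum of lengths of paths measured simplex by simplex with $d_{\sigma_\pi}$, and under $\Phi$ each $d_{\sigma_\pi}$ is carried to the Euclidean length on the corresponding Coxeter chamber of $[0,1]^n$; concatenating, the length of a path in $K({\cal L})$ equals the Euclidean length of its image. Since the Euclidean distance on $[0,1]^n$ is itself the length metric of the piecewise Euclidean structure given by the Coxeter subdivision (straight line segments realize the infimum), this forces $\Phi$ to be an isometry. The main subtlety here is checking that the length metric of $K({\cal L})$ really matches the Euclidean metric rather than being only bounded by it; this is handled by the observation that within any single Coxeter chamber the straight segment lies entirely in that chamber, so $\Phi^{-1}$ of such a segment is a candidate path of matching length.
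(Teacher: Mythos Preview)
The paper does not prove this lemma; it is quoted from \cite{BM10,CCHO} without argument. Your proof is the standard one and is correct in outline: identify each maximal simplex with a Coxeter chamber via a coordinate permutation of $\varphi_{\sigma_\pi}$, check the map is a well-defined bijection, and compare the two length metrics.

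One point in your final paragraph deserves tightening. To show $d_{K({\cal L})}(x,y) \leq \|\Phi(x)-\Phi(y)\|_2$ you appeal to the fact that ``within any single Coxeter chamber the straight segment lies entirely in that chamber,'' but this only covers the case where $\Phi(x)$ and $\Phi(y)$ lie in a common chamber. For arbitrary $x,y$, you should instead observe that the straight segment $[\Phi(x),\Phi(y)]$ in $[0,1]^n$ meets only finitely many closed chambers (the chambers being the maximal cells of the type-$A$ hyperplane arrangement restricted to the cube), so it can be broken into finitely many subsegments each contained in a single closed chamber; pulling these back through $\Phi^{-1}$ and concatenating yields a path in $K({\cal L})$ of total length exactly $\|\Phi(x)-\Phi(y)\|_2$. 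With that adjustment the argument is complete.
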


\begin{Lem}[\cite{CCHO}]
	Let ${\cal L}$ be a complemented modular lattice of rank $n$, and 
	let ${\cal F}$ be a frame of ${\cal L}$.
	Then $K({\cal F}) \simeq [0,1]^n$ is 
	an isometric subcomplex of $K({\cal L})$.
\end{Lem}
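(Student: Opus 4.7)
The plan is to use the CAT(0) local-to-global principle for geodesics. By the previous lemma, $K({\cal F})$ is isometric to the Euclidean cube $[0,1]^n$ via the identification of $\cal F$ with $2^{\{1,\ldots,n\}}$. Since a maximal chain of $\cal F$ has length $n = \rank \cal L$, it is also a maximal chain of $\cal L$; therefore every maximal simplex $\sigma$ of $K({\cal F})$ coincides with a maximal simplex of $K({\cal L})$, and the orthoscheme metric $d_{\sigma}$ from~\eqref{eqn:dsigma} is intrinsic to the chain defining $\sigma$. Hence the natural simplicial inclusion $\iota: K({\cal F}) \hookrightarrow K({\cal L})$ is isometric on each maximal simplex; the remaining task is to upgrade this to a global isometric embedding.

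The inequality $d_{K({\cal L})}(x,y) \leq d_{K({\cal F})}(x,y)$ for $x,y \in K({\cal F})$ is immediate: the $K({\cal F})$-geodesic $\gamma$ (a straight line in $[0,1]^n$) passes through a finite sequence of maximal simplices of $K({\cal F})$, all of which are also maximal simplices of $K({\cal L})$ with the same metric, so $\gamma$ is a path in $K({\cal L})$ of the same length. For the reverse inequality, I would prove that the same straight-line path $\gamma$ is a \emph{local geodesic} in $K({\cal L})$. Since $K({\cal L})$ is a complete CAT(0) space, local geodesics are globally minimizing: once this is shown, $\gamma$ must equal the unique $K({\cal L})$-geodesic between $x$ and $y$, giving $d_{K({\cal L})}(x,y) = \mathrm{length}(\gamma) = d_{K({\cal F})}(x,y)$. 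On the interior of each simplex, $\gamma$ is a Euclidean line segment and is trivially locally length-minimizing, so the only nontrivial checks are at the finitely many points where $\gamma$ crosses a codimension-one face $\tau$ shared by two maximal simplices of $K({\cal F})$.

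The main obstacle is verifying the Alexandrov angle condition at each such crossing: the angle between the incoming and outgoing tangent directions of $\gamma$, computed in the CAT(1) link of $\tau$ inside $K({\cal L})$, must equal $\pi$. The link of $\tau$ in $K({\cal L})$ contains the link in $K({\cal F})$ (where the angle is manifestly $\pi$, since $\gamma$ is a straight line in a Euclidean cube) but in general also has additional ``chambers'' coming from maximal simplices of $K({\cal L})$ that contain $\tau$ yet lie outside $K({\cal F})$. These extra chambers could \emph{a priori} yield a shorter path between the two tangent directions in the link and drop the angle below $\pi$. To rule this out, I would invoke Lemma~\ref{lem:frame}: any chain of $\cal L$ through $\tau$ producing a candidate extra chamber can be combined with one of the adjacent $\cal F$-chains through $\tau$ into a common frame ${\cal G}$, and a local lower-rank analysis of the link of $\tau$ inside $K({\cal G})$ then forces the two tangent directions of $\gamma$ to remain antipodal relative to that chamber; applying this uniformly to every candidate chamber yields the desired angle equality and completes the proof.
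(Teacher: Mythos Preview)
The paper does not supply its own proof of this lemma; it is quoted from \cite{CCHO}. So there is no in-paper argument to compare against, and the question is whether your outline stands on its own.

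Your overall strategy---show that the straight segment in $K({\cal F})\simeq[0,1]^n$ is a local geodesic in $K({\cal L})$ and invoke the CAT(0) local-to-global principle---is reasonable, but the execution has two real gaps. First, a straight line in $[0,1]^n$ need not meet only codimension-one faces of the orthoscheme triangulation; it can pass through faces of arbitrary codimension (e.g.\ the segment from $(1,0,0)$ to $(0,1,1)$ hits the barycenter). You would need either a perturbation/density argument to reduce to the generic case, or a direct link analysis at higher-codimension faces; you do neither. Second, and more seriously, your proposed link argument at a codimension-one face $\tau$ does not work as written. The incoming and outgoing directions of $\gamma$ lie in the \emph{two} ${\cal F}$-chambers $\sigma_1,\sigma_2$ adjacent to $\tau$, while the frame ${\cal G}$ you build from Lemma~\ref{lem:frame} contains the extra chamber $\sigma_3$ and only \emph{one} of $\sigma_1,\sigma_2$. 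A ``lower-rank analysis inside $K({\cal G})$'' therefore cannot see both tangent directions at once, so it says nothing about the angle between them; and ``antipodal relative to that chamber'' is not a well-defined condition you can check chamber by chamber.

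What actually resolves the codimension-one case is a folding map: collapse every extra hemisphere in ${\rm Lk}(\tau,K({\cal L}))$ onto (say) the $\sigma_2$-hemisphere by the isometry fixing the equator $S^{n-2}$. This map is length-preserving on paths, sends any path from $-\gamma'$ to $\gamma'$ into the round sphere formed by the $\sigma_1$- and $\sigma_2$-hemispheres, and hence bounds its length below by $\pi$. No appeal to Lemma~\ref{lem:frame} is needed here. More globally, the standard route (and presumably the one taken in \cite{CCHO}) is to produce a $1$-Lipschitz retraction $K({\cal L})\to K({\cal F})$---the building-theoretic retraction onto an apartment---which immediately gives $d_{K({\cal L})}(x,y)\ge d_{K({\cal F})}(x,y)$ for $x,y\in K({\cal F})$ and bypasses the local-geodesic analysis entirely.
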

\begin{Cor}\label{cor:diam}
	Let ${\cal L}$ be a complemented modular lattice of rank $n$.
	Then $\diam K({\cal L}) = \sqrt{n}$.
\end{Cor}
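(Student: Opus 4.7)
The plan is to prove the diameter upper and lower bounds separately, using the preceding two lemmas.

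For the lower bound, I would pick any frame $\mathcal{F} = \langle a_1,a_2,\ldots,a_n\rangle$ of $\mathcal{L}$ (which exists by complementarity, e.g.\ apply Lemma~\ref{lem:frame} to two maximal chains). By the previous lemma, $K(\mathcal{F})$ is an isometric subcomplex of $K(\mathcal{L})$, and by Lemma~\ref{lem:Boolean}, $K(\mathcal{F}) \simeq [0,1]^n$. The points $\mathbf{0}$ and $\mathbf{1}$ of $\mathcal{L}$ correspond under the isometry to $0$ and $(1,1,\ldots,1)$ in $[0,1]^n$, which are at Euclidean distance $\sqrt{n}$. Hence $\operatorname{diam} K(\mathcal{L}) \geq \sqrt{n}$.

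For the upper bound, I would argue as follows. Take any two points $x,y \in K(\mathcal{L})$. Their supports $\mathcal{C}_x$ and $\mathcal{C}_y$ are chains in $\mathcal{L}$. Since $\mathcal{L}$ is modular, it satisfies the Jordan-Dedekind chain condition (all maximal chains have length $n$), so $\mathcal{C}_x$ and $\mathcal{C}_y$ can be refined to maximal chains $\mathcal{C}$ and $\mathcal{D}$ respectively, each of length $n$. By Lemma~\ref{lem:frame}, there exists a frame $\mathcal{F} \subseteq \mathcal{L}$ containing both $\mathcal{C}$ and $\mathcal{D}$. Consequently $x,y \in K(\mathcal{F})$, and since $K(\mathcal{F})$ is an isometric subcomplex of $K(\mathcal{L})$ by the previous lemma,
\[
d_{K(\mathcal{L})}(x,y) = d_{K(\mathcal{F})}(x,y) \leq \operatorname{diam} K(\mathcal{F}) = \operatorname{diam} [0,1]^n = \sqrt{n}.
\]
Combining the two bounds gives $\operatorname{diam} K(\mathcal{L}) = \sqrt{n}$.

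The only step requiring any care is the invocation of Lemma~\ref{lem:frame} after extending $\mathcal{C}_x$ and $\mathcal{C}_y$ to maximal chains; once one knows that a chain in a modular lattice of rank $n$ refines to a chain of length $n$, and that Lemma~\ref{lem:frame} together with the isometric embedding $K(\mathcal{F}) \hookrightarrow K(\mathcal{L})$ reduces the problem to the Boolean case, the diameter computation becomes a routine Euclidean fact about the unit cube. I do not foresee a real obstacle beyond verifying the chain-refinement step.
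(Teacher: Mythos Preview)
Your proof is correct and follows essentially the same approach as the paper: for the upper bound, use Lemma~\ref{lem:frame} to place any two points in a common frame $K(\mathcal{F}) \simeq [0,1]^n$ (an isometric subcomplex), and for the lower bound, observe that $\mathbf{0}$ and $\mathbf{1}$ realize distance $\sqrt{n}$. Your extra care in refining the supports to maximal chains before invoking Lemma~\ref{lem:frame} is a reasonable elaboration of a step the paper leaves implicit.
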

\begin{proof}
	For two points $x,y \in K({\cal L})$, there is a frame ${\cal F}$ 
	such that $x,y \in K({\cal F})$ (by Lemma~\ref{lem:frame}).
	Since $K({\cal F}) \simeq [0,1]^n$ and $K({\cal F})$ is an isometric subspace, 
	the distance $d(x,y)$ 
	is bounded by the diameter $\sqrt{n}$ of $[0,1]^n$, 
	which is attained by $x = {\bf 0}$ and $y = {\bf 1}$.
\end{proof}
A frame ${\cal F} = \langle a_1,a_2,\ldots,a_n \rangle$ 
is isomorphic to Boolean lattice $2^{\{1,2,\ldots,n\}}$ 
by $a_{i_1} \vee a_{i_2} \vee \cdots \vee a_{i_k} \mapsto \{i_1,i_2,\ldots,i_k\}$. 
Also the subcomplex $K({\cal F})$ is viewed as an $n$-cube $[0,1]^n$, and
a point $x$ in $K({\cal F})$ 
is viewed as $x = (x_1,x_2,\ldots,x_n) \in [0,1]^n$
via isometry $(\ref{eqn:isometry})$.
This $n$-dimensional vector $(x_1,x_2,\ldots,x_n)$ is called 
the {\em ${\cal F}$-coordinate} of $x$. 
From ${\cal F}$-coordinate $(x_1,x_2,\ldots,x_n)$, 
the original expression of $x$ is recovered by
sorting $x_1,x_2,\ldots,x_n$ in decreasing order as: $x_{i_1} \geq x_{i_2} \geq \cdots \geq x_{i_n}$, and letting
\begin{equation}\label{eqn:recover}
x = (1- x_{i_1}){\bf 0} + \sum_{k=1}^n (x_{i_k} - x_{i_{k+1}}) (a_{i_1} \vee a_{i_2} \vee \cdots \vee a_{i_k}),
\end{equation}
where $x_{i_{n+1}} := 0$.

\subsubsection{Lov\'asz extension}
We here introduce the Lov\'asz extension 
for a function on a modular lattice ${\cal L}$.
For a function $f:{\cal L} \to \RR$, 
the {\em Lov\'asz extension} $\overline f: K({\cal L}) \to \RR$ of $f$ is defined by
\begin{equation*}
\overline f (x) := \sum_{i} \lambda_i f(p_i) \quad  (x = \sum_{i} \lambda_i p_i \in K({\cal L}) ).
\end{equation*}
In the case where ${\cal L} = 2^{\{1,2,\ldots,n\}}$, 
this definition of the Lov\'asz extension coincides with the original one~\cite{FujiBook,Lovasz83}
by $K({\cal L}) \simeq [0,1]^n$ (Lemma~\ref{lem:Boolean}).
\begin{Thm}[\cite{HH16L-convex}]\label{thm:Lovasz_ext}
Let ${\cal L}$ be a modular lattice.
For a function $f: {\cal L} \to \RR$, the following conditions are equivalent:
\begin{itemize}
\item[{\rm (1)}] $f$ is submodular. 
\item[{\rm (2)}] $\overline{f}$ is convex
\end{itemize}
\end{Thm}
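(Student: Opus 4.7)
The plan is to reduce both directions to the classical Lovász theorem for Boolean lattices via the cube-like subcomplexes of $K({\cal L})$ produced by Lemma~\ref{lem:frame}. The starting observation is that $\overline f$ is affine on every simplex of $K({\cal L})$ and, when restricted to the subcomplex $K({\cal F})$ associated with a frame ${\cal F}$ (so that $K({\cal F}) \simeq [0,1]^n$ by Lemma~\ref{lem:Boolean}), coincides with the classical Lovász extension of $f|_{\cal F}: {\cal F} \to \RR$.

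For the direction $(2) \Rightarrow (1)$, I would fix $p,q \in {\cal L}$, extend maximal chains through $p$ and through $q$, and use Lemma~\ref{lem:frame} to find a frame ${\cal F}$ containing both chains. Under the isometry $K({\cal F}) \simeq [0,1]^n$ of Lemma~\ref{lem:Boolean}, $p$ and $q$ are represented by $0/1$-vectors whose coordinatewise minimum and maximum correspond to $p \wedge q$ and $p \vee q$, so the midpoint of the Euclidean segment $[p,q]$ equals the midpoint of $[p \wedge q, p \vee q]$. Writing this common midpoint as the formal combination $m := \tfrac{1}{2}(p \wedge q) + \tfrac{1}{2}(p \vee q)$ (legitimate since $p \wedge q \preceq p \vee q$ is a chain), one has $\overline f (m) = \tfrac{1}{2}(f(p \wedge q) + f(p \vee q))$. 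Because $K({\cal F})$ is an isometric subcomplex of $K({\cal L})$ and $K({\cal L})$ is uniquely geodesic (Proposition~\ref{prop:uniquely-geodesic}), the geodesic from $p$ to $q$ lies in $K({\cal F})$; hence applying convexity (\ref{eqn:convexity}) at $t = 1/2$ yields $f(p \wedge q) + f(p \vee q) \leq f(p) + f(q)$, i.e., submodularity.

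For the direction $(1) \Rightarrow (2)$, it suffices to prove convexity of $\overline f$ along any geodesic of $K({\cal L})$. Given $x,y \in K({\cal L})$, I would extend their supports to maximal chains, apply Lemma~\ref{lem:frame} once more to obtain a frame ${\cal F}$ with $x,y \in K({\cal F})$, and invoke the same isometric/uniqueness argument to conclude that the geodesic $[x,y]$ lies inside $K({\cal F})$. Under the identification $K({\cal F}) \simeq [0,1]^n$, the restriction of $\overline f$ is the classical Lovász extension of $f|_{\cal F}$. Because ${\cal F}$ is a sublattice of ${\cal L}$, meets and joins computed in ${\cal F}$ agree with those in ${\cal L}$, so $f|_{\cal F}$ inherits submodularity; the classical Lovász theorem then yields convexity on the cube, completing the argument.

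The main obstacle lies in the geometric step: verifying that the subcomplex $K({\cal F})$ is geodesically convex in $K({\cal L})$, so that geodesics between its points remain inside it. This rests on the stated isometric embedding $K({\cal F}) \hookrightarrow K({\cal L})$ together with unique geodesics in a CAT$(0)$-space. A further subtlety is that Lemma~\ref{lem:frame} produces a frame only under complementarity; for a general modular ${\cal L}$ one has to work instead with the distributive sublattice generated by two maximal chains and identify its orthoscheme complex with a convex subcomplex of a cube, which is where the most careful bookkeeping is required.
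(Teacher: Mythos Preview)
Your proposal is correct and follows essentially the same route as the paper's sketch: reduce to the classical Lov\'asz theorem by placing any two points (or lattice elements) in a common frame $K({\cal F})\simeq [0,1]^n$, use that $K({\cal F})$ is an isometric subcomplex of the CAT$(0)$ space $K({\cal L})$ to trap the geodesic there, and then invoke submodularity/convexity on the cube. You are in fact more careful than the paper's sketch in two respects---you spell out the $(2)\Rightarrow(1)$ direction via the midpoint argument, and you flag that Lemma~\ref{lem:frame} only yields a frame in the complemented case, so that for a general modular lattice one must work with the distributive sublattice generated by two chains; the paper's sketch tacitly assumes frames exist and does not address this point.
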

\begin{proof}[Sketch of proof] 
	For two points $x,y \in K({\cal L})$, 
	there is a frame ${\cal F}$ such that $K({\cal F})$ contains $x,y$.
    Also $K({\cal F})$ is an isometric subspace of $K({\cal L})$.
	Therefore the geodesic $[x,y]$ belongs to $K({\cal F})$.
	Hence, a function on 
	$K({\cal L})$ is convex if and only 
	if it is convex on $K({\cal F})$ for every frame ${\cal F}$.
	For any frame ${\cal F}$, 
	the restriction of a submodular function $f:{\cal L} \to \RR$ to ${\cal F}$
	is a usual submodular function on Boolean lattice 
	${\cal F} \simeq 2^{\{1,2,\ldots,n\}}$.
	Hence
	$\overline f:K({\cal F}) \to \RR$ is viewed as
	the usual Lov\'asz extension by $[0,1]^n \simeq K({\cal F})$, and is convex.
\end{proof}

The rank function $r$ is submodular. The Lov\'asz extension $\overline{r}$ of $r$ 
is written by 
the $l_1$-metric on $K({\cal L})$.
Here the $l_1$-metric $d_1$ is obtained by
replacing $\| \cdot \|_2$ by $\| \cdot \|_1$ in (\ref{eqn:dsigma}), i.e.,
\begin{equation*}\label{eqn:d1_sigma}
d_{\sigma}(x,y) := \| \varphi_\sigma (x) - \varphi_\sigma (y) \|_1 \quad (x,y \in \sigma). 
\end{equation*}
The $l_1$-metric on $K({\cal L})$ is denoted by $d_1$. 
The function $x \mapsto d_1({\bf 0}, x)$ is simply written as $d_1$.
\begin{Lem}\label{lem:Lovasz_r}
	The Lov\'asz extension $\overline r$ of the rank function $r$
	is equal to $d_1$.
\end{Lem}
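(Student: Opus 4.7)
The plan is to establish $d_1(\mathbf{0},x) = \overline{r}(x)$ by matching upper and lower bounds, both obtained by a direct computation in the orthoscheme parametrization $\varphi_\sigma$. First I would fix $x \in K({\cal L})$ with support chain ${\cal C}$. Since $\mathbf{0} \preceq p$ for every $p \in {\cal C}$, I can extend $\{\mathbf{0}\}\cup{\cal C}$ to a maximal chain $\mathbf{0} = p_0 \prec p_1 \prec \cdots \prec p_n$ spanning a simplex $\sigma$. By the Jordan--Dedekind property of modular lattices, $r(p_i)=i$. Writing $x = \sum_i \lambda_i p_i$ with $\lambda_i \geq 0$ and $\sum_i \lambda_i = 1$, the definition of the Lov\'asz extension gives $\overline{r}(x) = \sum_i i \lambda_i$, while the $j$-th coordinate of $\varphi_\sigma(x)$ equals $\sum_{i \geq j}\lambda_i \geq 0$, so that $\|\varphi_\sigma(x)\|_1 = \sum_j \sum_{i \geq j}\lambda_i = \sum_i i \lambda_i$.

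For the upper bound $d_1(\mathbf{0},x) \leq \overline{r}(x)$, I would take the straight-line path $t\mapsto (1-t)\mathbf{0}+tx$, which stays in $\sigma$ and whose $d_{1,\sigma}$-length equals $\|\varphi_\sigma(x) - \varphi_\sigma(\mathbf{0})\|_1 = \overline{r}(x)$.

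For the lower bound I would prove a local $1$-Lipschitz property of $\overline{r}$ with respect to $d_1$. Given any simplex $\tau$ (viewed inside a maximal simplex $\sigma$) and $y,y'\in\tau$ with barycentric coordinates $(\mu_i),(\mu'_i)$, telescoping gives
\[
\overline{r}(y) - \overline{r}(y') = \sum_{i=0}^n i(\mu_i-\mu'_i) = \sum_{j=1}^n \sum_{i \geq j}(\mu_i-\mu'_i) = \sum_{j=1}^n [\varphi_\sigma(y)-\varphi_\sigma(y')]_j,
\]
so that $|\overline{r}(y)-\overline{r}(y')| \leq \|\varphi_\sigma(y)-\varphi_\sigma(y')\|_1 = d_{1,\sigma}(y,y')$. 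Because $\overline{r}$ is defined intrinsically (by coefficients on the support chain) and agrees on common faces of different maximal simplices, it is continuous on $K({\cal L})$; hence this single-simplex estimate can be summed along any simplex-by-simplex decomposition $0=t_0<\cdots<t_m=1$ of a path $\gamma$ from $\mathbf{0}$ to $x$, yielding
\[
\overline{r}(x) = |\overline{r}(x)-\overline{r}(\mathbf{0})| \leq \sum_{i=0}^{m-1} d_{1,\sigma_i}(\gamma(t_i),\gamma(t_{i+1})).
\]
Taking the supremum over subdivisions and infimum over paths shows $\overline{r}(x) \leq d_1(\mathbf{0},x)$.

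The main (modest) obstacle is verifying that the simplexwise $1$-Lipschitz bound on $\overline{r}$ aggregates correctly along an arbitrary path; this works precisely because the $\ell_1$-length on $K({\cal L})$ is defined as a supremum over such simplex-decompositions and $\overline{r}$ is continuous across shared faces. Combining the two bounds yields $d_1(\mathbf{0},x) = \overline{r}(x)$, which is the stated equality.
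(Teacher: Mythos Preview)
Your argument is correct and shares its computational core with the paper's proof: both write $x=\sum_i\lambda_ip_i$ in a maximal simplex $\sigma$ containing $\mathbf{0}$ and compute $\|\varphi_\sigma(x)\|_1=\sum_i i\lambda_i=\overline r(x)$. The paper stops there, tacitly identifying $d_1(\mathbf{0},x)$ with the single-simplex quantity $\|\varphi_\sigma(x)\|_1$. You instead justify that identification by a two-sided bound: the straight segment in $\sigma$ gives the upper bound, and your simplexwise $1$-Lipschitz estimate $|\overline r(y)-\overline r(y')|\le d_{1,\sigma}(y,y')$, summed along any admissible subdivision of a path, gives the lower bound $\overline r(x)\le d_1(\mathbf{0},x)$. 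This extra step makes the proof self-contained where the paper is terse; conversely, the paper's one-line version is adequate once one accepts that $\sigma$ is isometrically embedded (for $\ell_1$ as well as $\ell_2$), a fact your Lipschitz argument effectively reproves in the special case needed here.
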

\begin{proof}
For $x = \sum_{i=0}^n \lambda_i p_i \in K({\cal L})$,
consider the simplex $\sigma$ formed by $p_i$'s.
Then we have 
\begin{equation*}
d_1({\bf 0},x) = \| \varphi_\sigma(x) \|_1 
=  \| \sum_{i=1}^n \lambda_i (e_1+e_2 + \cdots + e_i) \|_1 = \sum_{i=0}^n \lambda_i i = \sum_{i=0}^n \lambda_i r(p_i) = \overline{r}(x).
\end{equation*}
\end{proof}

The following lemma will be used to obtain a minimizer of a function on ${\cal L}$ 
from an approximate minimizer of its Lov\'asz extension.
\begin{Lem}~\label{lem:minimizer}
Let $f:{\cal L} \to \ZZ$ be an integer-valued function, and 
let $p^* \in {\cal L}$ be a minimizer of $f$.
For $x \in K({\cal L})$,
if $\overline{f}(x) - f(p^*) < 1$, then
there exists a minimizer of~$f$ in the support of $x$.
\end{Lem}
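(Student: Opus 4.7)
The plan is to unwind the definition of the Lovász extension and exploit the integrality of $f$. Writing $x = \sum_{i} \lambda_i p_i$ with $p_0 \prec p_1 \prec \cdots \prec p_k$ a chain and with coefficients $\lambda_i > 0$ summing to $1$ (so that the $p_i$ are exactly the elements of the support of $x$), the extension evaluates to $\overline{f}(x) = \sum_i \lambda_i f(p_i)$. Since $p^*$ is a global minimizer, every term $f(p_i) - f(p^*)$ is nonnegative, and since $f$ is integer-valued, each of these differences is a nonnegative \emph{integer}.

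The argument is then a one-line contradiction. Suppose that no $p_i$ in the support of $x$ is a minimizer of $f$. Then $f(p_i) - f(p^*) \geq 1$ for every $i$, and using $\sum_i \lambda_i = 1$ I would compute
\begin{equation*}
\overline{f}(x) - f(p^*) \;=\; \sum_i \lambda_i \bigl(f(p_i) - f(p^*)\bigr) \;\geq\; \sum_i \lambda_i \;=\; 1,
\end{equation*}
contradicting the hypothesis $\overline{f}(x) - f(p^*) < 1$. Hence at least one $p_i$ in the support satisfies $f(p_i) = f(p^*)$ and is therefore a minimizer.

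There is essentially no obstacle here; the statement is a direct consequence of integrality together with the convex-combination form of $\overline{f}$. The only thing to be careful about is to use the precise definition of ``support'' (the set of elements with \emph{strictly positive} coefficient), so that one does not include trivial terms with $\lambda_i = 0$ that could be minimizers without telling us anything. With that convention in place the proof is complete in a few lines.
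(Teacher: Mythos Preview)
Your proof is correct and essentially identical to the paper's own argument: both write $x=\sum_i \lambda_i p_i$, assume no $p_i$ is a minimizer so that $f(p_i)\geq f(p^*)+1$ by integrality, and derive $\overline f(x)\geq f(p^*)+1$, contradicting the hypothesis.
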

\begin{proof}
	Suppose that $x = \sum_{i} \lambda_i p_i$.
Suppose to the contrary that all $p_i$'s satisfy $f(p_i) > f(p^*)$.
Then $f(p_i) \geq f(p^*) + 1$.
Hence $\overline f(x) = \sum_{i} \lambda_i f(p_i) 
\geq \sum_{i}\lambda_i (f(p^*)+1) = f(p^*) + 1$.
However this contradicts $\overline{f}(x) - f(p^*) < 1$.
\end{proof}

The following lemma will be used to estimate the Lipschitz constant
of the Lov\'asz extension.
\begin{Lem}\label{lem:Lipschitz}
The Lov\'asz extension $\overline{f}$ 
of $f: {\cal L} \to \RR$ 
is $L$-Lipschitz with
\begin{equation*}
L \leq 2 \sqrt{n} \max_{p \in {\cal L}} |f(p)|.
\end{equation*}
\end{Lem}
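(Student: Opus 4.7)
The plan is to exploit the fact that $\overline{f}$ is affine on each maximal simplex of $K(\mathcal{L})$, bound its Euclidean gradient there, and then piece the local Lipschitz estimates together using the length-space structure of $K(\mathcal{L})$. Fix a maximal simplex $\sigma$ associated with a chain $p_0 \prec p_1 \prec \cdots \prec p_k$. Any $x \in \sigma$ can be written as $x = \sum_{i=0}^k \lambda_i p_i$ with $\lambda_i \geq 0$ and $\sum_i \lambda_i = 1$. Passing to orthoscheme coordinates $y = \varphi_\sigma(x) = \sum_i \lambda_i (e_1 + \cdots + e_i)$, one reads off $\lambda_0 = 1 - y_1$, $\lambda_i = y_i - y_{i+1}$ for $1 \leq i < k$, and $\lambda_k = y_k$. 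Substituting into the definition of the Lov\'asz extension gives
\[
\overline{f}(x) = \sum_{i=0}^k \lambda_i f(p_i) = f(p_0) + \sum_{i=1}^k y_i \bigl(f(p_i) - f(p_{i-1})\bigr),
\]
so $\overline{f}$ is affine in $y$ with Euclidean gradient $\bigl(f(p_i) - f(p_{i-1})\bigr)_{i=1}^k$.

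Let $M := \max_{p \in \mathcal{L}} |f(p)|$. The $\ell_2$-norm of this gradient is bounded by
\[
\sqrt{\sum_{i=1}^k \bigl(f(p_i) - f(p_{i-1})\bigr)^2} \leq \sqrt{k \cdot (2M)^2} \leq 2 \sqrt{n}\, M,
\]
using $|f(p_i) - f(p_{i-1})| \leq 2M$ and $k \leq n$. Since the metric $d_\sigma$ on $\sigma$ is the pullback of the Euclidean metric through $\varphi_\sigma$, this says that $\overline{f}|_\sigma$ is $(2\sqrt{n} M)$-Lipschitz with respect to $d_\sigma$.

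Finally, one upgrades this local estimate to a global one. The function $\overline{f}$ is continuous on $K(\mathcal{L})$ since the defining formula agrees on common faces of adjacent simplices. Given $x, y \in K(\mathcal{L})$, take any path $\gamma$ from $x$ to $y$ together with a subdivision $0 = t_0 < t_1 < \cdots < t_m = 1$ such that $\gamma([t_j, t_{j+1}])$ lies in a single simplex $\sigma_j$; applying the per-simplex Lipschitz bound and summing yields $|\overline{f}(x) - \overline{f}(y)| \leq 2\sqrt{n}\, M \cdot \mathrm{length}(\gamma)$. Taking the infimum over paths (which equals $d(x,y)$ by definition of the metric on the orthoscheme complex) completes the proof. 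The computation of the affine gradient on a single simplex is the only real content; the piecing-together step is routine given that $K(\mathcal{L})$ is a length space assembled from finitely many Euclidean simplices.
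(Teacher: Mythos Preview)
Your proof is correct and follows essentially the same strategy as the paper's: establish the Lipschitz bound on each maximal simplex via the Euclidean structure, then glue along a path. Your presentation is slightly cleaner in that you explicitly identify $\overline{f}$ as an affine function of the orthoscheme coordinates with gradient $(f(p_i)-f(p_{i-1}))_{i}$ and bound its $\ell_2$-norm directly; the paper carries out the equivalent computation by writing $\lambda_k-\mu_k=(u_k-u_{k+1})-(v_k-v_{k+1})$ and applying the triangle and Cauchy--Schwarz inequalities, where $u_k=\sum_{j\ge k}\lambda_j$ are exactly your orthoscheme coordinates. For the global step, the paper pieces the bound along the geodesic, while you take the infimum over all paths; both are valid here since $K(\mathcal{L})$ is a length space.
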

\begin{proof}
We first show that the restriction $\overline{f}|_{\sigma}$ of $\overline{f}$ to any maximal simplex $\sigma$ is $L$-Lipschitz with 
$
L \leq 2 \sqrt{n} \max_{p \in {\cal L}} |f(p)|
$.
Suppose that $\sigma$ corresponds to 
a chain ${\bf 0} = p_0 < p_1 < \cdots < p_n = {\bf 1}$.
Let $x = \sum_{k}\lambda_k p_k$ and $y = \sum_{k}\mu_k p_k$ be points in $\sigma$. For $k=0,1,2,\ldots,n$,
define $u_k$ and $v_k$ by
\[
u_k := \lambda_{k} + \lambda_{k+1} + \cdots + \lambda_{n}, \quad 
v_k := \mu_{k} + \mu_{k+1} + \cdots + \mu_{n}.
\]
Then $d_{\sigma}(x,y)$ is given by
\begin{equation*}
d_{\sigma}(x,y) = \sqrt{\sum_{k=1}^n (u_k - v_k)^2}. 
\end{equation*}
Letting $C := \max_{p \in {\cal L}} |f(p)|$, 
we have
\begin{eqnarray*}
&& |\overline{f}(x) - \overline{f}(y)| = \left| 
\sum_{k=0}^n (\lambda_k - \mu_k) f(p_k) \right| 
\leq C\sum_{k=0}^n |\lambda_k - \mu_k| \\ && 
= C\sum_{k=0}^n | u_{k}- u_{k+1} - (v_{k} - v_{k+1})|  \leq 
2 C \sum_{k=1}^n |u_k - v_k|   \leq 2 \sqrt{n} C  \sqrt{\sum_{k=1}^n (u_k - v_k)^2},
\end{eqnarray*}
where we let $u_0 = v_0 := 1$ and $u_{n+1}= v_{n+1} := 0$.
Thus $\overline{f}|_{\sigma}$ is $2 \sqrt{n} C$-Lipschitz. 

Next we show that $\overline{f}$ is  $2 \sqrt{n} C$-Lipschitz. 
For any $x,y\in K({\cal L})$, 
choose the geodesic $\gamma$ between $x$ and $y$, 
and $0=t_0 < t_1 < \cdots < t_m = 1$ 
such that $\gamma([t_i,t_{i+1}])$ belongs  to simplex $\sigma_i$.
Then we have
\[
|\overline f(x) -   \overline f(y)|  \leq  
\sum_{i=1}^m | \overline{f}(\gamma(t_i)) - \overline f (\gamma(t_{i-1})) |  \leq  2 \sqrt{n} C \sum_{i=1}^{m} d_{\sigma_i}(\gamma(t_i),\gamma(t_{i-1})) =  2 \sqrt{n} C d(x,y).
\]
\end{proof}

\section{Maximum vanishing subspace problem}\label{sec:MVSP}

%
\subsection{CAT(0)-space relaxation}
Suppose that we are given an instance of WMVSP:
a partitioned matrix 
$A = (A_{\alpha \beta})$ of type $(m_{1},m_{2},\ldots,m_{\mu}; n_{1}, n_2,\ldots,n_{\nu})$
and nonnegative integer weights $C_{\alpha},D_{\beta}$ 
for $1 \leq \alpha \leq \mu$ and $1 \leq \beta \leq \nu$.
Let $m = \sum_{\alpha} m_\alpha$ and $n = \sum_{\beta} n_\beta$.
First we formulate WMVSP as 
an unconstrained submodular function minimization 
over a complemented modular lattice.
Let ${\cal L}_{\alpha}$ and ${\cal M}_{\beta}$ 
denote the lattices of all 
vector subspaces of ${\bf F}^{m_\alpha}$ and of ${\bf F}^{n_\beta}$, respectively.
Let $R_{\alpha \beta} := R^{A_{\alpha, \beta}}$; see (\ref{eqn:R}) for the definition of $R$.
Then the condition (\ref{eqn:vanishing}) is written as
\begin{equation}\label{eqn:R=0}
R_{\alpha \beta}(X_{\alpha},Y_{\beta}) = 0 \quad (1 \leq \alpha \leq \mu, 1 \leq \beta \leq \nu).
\end{equation}
By using $R_{\alpha \beta}$ as penalty terms, 
WMVSP is equivalent to the following unconstrained problem:
\begin{eqnarray*}
{\rm WMVSP}_{R} : && \\
{\rm Min.} &&  
- \sum_{\alpha}C_{\alpha} \dim X_{\alpha} - \sum_{\beta} D_{\beta} \dim Y_{\beta} 
+ M \sum_{\alpha, \beta} R_{\alpha \beta}(X_{\alpha},Y_{\beta}) \\
{\rm s.t.} && (X_1,X_2,\ldots,X_{\mu}, Y_1,Y_2,\ldots,Y_{\nu}) \in 
\prod_{\alpha}{\cal L}_\alpha \times \prod_{\beta} \check{\cal M}_\beta, 
\end{eqnarray*}
where the penalty parameter $M > 0$ is chosen as 
\[
M := \sum_{\alpha}C_{\alpha} m_{\alpha} + \sum_{\beta} D_{\beta} n_{\beta}+1.
\]
\begin{Lem}
Any optimal solution of WMVSP$_R$ is optimal to WMVSP	
\end{Lem}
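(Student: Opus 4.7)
The plan is to show that at any optimum of WMVSP$_R$ the penalty term must vanish, so the optimum is automatically a vanishing subspace and the two problems agree on vanishing subspaces.

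First, I would produce an explicit upper bound on the optimal value of WMVSP$_R$. The trivial choice $X_\alpha = \{0\}$ and $Y_\beta = \{0\}$ is feasible (it is a vanishing subspace, giving all $R_{\alpha\beta} = 0$), and its objective value is $0$. Hence the minimum value of WMVSP$_R$ is at most $0$.

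Next, I would produce a matching lower bound on the objective value of any solution that violates the vanishing condition. For any $(X_1,\ldots,X_\mu,Y_1,\ldots,Y_\nu)$, the linear part satisfies
\[
-\sum_{\alpha} C_{\alpha} \dim X_{\alpha} - \sum_{\beta} D_{\beta} \dim Y_{\beta} \;\geq\; -\sum_{\alpha} C_{\alpha} m_{\alpha} - \sum_{\beta} D_{\beta} n_{\beta} \;=\; -(M-1).
\]
Since each $R_{\alpha\beta}$ is a nonnegative integer, if some $R_{\alpha\beta}(X_\alpha,Y_\beta) \neq 0$ then $\sum_{\alpha,\beta} R_{\alpha\beta}(X_\alpha,Y_\beta) \geq 1$, and the total objective is at least $-(M-1) + M = 1 > 0$. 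This contradicts the upper bound of $0$ established above, so any optimum must satisfy (\ref{eqn:R=0}), i.e., be a vanishing subspace.

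Finally, I would observe that on the set of vanishing subspaces the penalty term is identically zero, so the WMVSP$_R$ objective restricted to that set coincides with the negative of the WMVSP objective. Combined with the previous step, every minimizer of WMVSP$_R$ is a vanishing subspace achieving the maximum WMVSP value, completing the proof. No step here is hard; the only subtlety is picking $M$ strictly larger than the maximum possible value of the linear part, which the choice $M = \sum_\alpha C_\alpha m_\alpha + \sum_\beta D_\beta n_\beta + 1$ guarantees.
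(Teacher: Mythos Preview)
Your proof is correct and follows essentially the same argument as the paper: show that any point with a nonzero penalty term has positive objective, while the trivial solution has objective zero, so optima must be vanishing subspaces; on vanishing subspaces the two objectives coincide. Your write-up is slightly more explicit in spelling out the bound $-(M-1)+M=1>0$ and the final identification of objectives, but the idea is identical.
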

\begin{proof}
	It suffices to show that any optimal 
	solution of WMVSP$_R$ satisfies the condition~(\ref{eqn:R=0}). 
	Indeed, if $R_{\alpha \beta}(X_{\alpha},Y_{\beta}) > 0$ for 
	$(X_1,\ldots,X_{\mu},Y_1,\ldots,Y_{\nu})$ and some $\alpha, \beta$
	then the objective value of WMVSP$_R$ is positive, 
	and $(X_1,\ldots,X_{\mu},Y_1,\ldots,Y_{\nu})$ is never optimal 
	(since the trivial solution 
	$(\{0\},\ldots,\{0\},\{0\},\ldots,\{0\})$ has the objective value zero).
\end{proof}
By (\ref{eqn:dim}), Lemmas~\ref{lem:R_is_submo} and \ref{lem:Lovasz_r}, we have:
\begin{Lem}
	The objective function of WMVSP$_R$ is submodular on $\prod_{\alpha}{\cal L}_\alpha \times \prod_{\beta} \check{\cal M}_\beta$, where the Lov\'asz extension is given
	by
	\begin{eqnarray*}
	&& (x_1,x_2,\ldots,x_\mu,y_1,y_2,\ldots,y_{\nu})  \\
	&& \quad \mapsto - \sum_{\alpha}C_{\alpha} d_1(x_{\alpha}) - \sum_{\beta} D_{\beta} d_1(y_{\beta}) 
	+ M \sum_{\alpha, \beta} \overline{R_{\alpha \beta}}(x_{\alpha},y_{\beta}).
	\end{eqnarray*}
\end{Lem}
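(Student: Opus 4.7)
The plan is to decompose the objective into three types of summands, verify submodularity and the Lovász extension formula for each, and combine via linearity. Writing
\[
f(X_1,\ldots,Y_\nu) = -\sum_\alpha C_\alpha \dim X_\alpha - \sum_\beta D_\beta \dim Y_\beta + M \sum_{\alpha,\beta} R_{\alpha\beta}(X_\alpha, Y_\beta),
\]
each summand, viewed as a function on the product lattice ${\cal P} := \prod_\alpha {\cal L}_\alpha \times \prod_\beta \check{\cal M}_\beta$, depends only on one or two of the coordinates.

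For submodularity, the dimension function is modular on both ${\cal L}_\alpha$ and $\check{\cal M}_\beta$ by (\ref{eqn:dim}) (passing to the opposite lattice swaps the roles of $\cap$ and $+$ but preserves modular equality). Since meet and join on ${\cal P}$ act coordinatewise, a submodular function of a subset of coordinates lifts trivially to a submodular function on ${\cal P}$. Hence the dimension terms are modular on ${\cal P}$, and Lemma~\ref{lem:R_is_submo} lifts to give that each $R_{\alpha\beta}(X_\alpha, Y_\beta)$ is submodular on ${\cal P}$. With $C_\alpha, D_\beta, M \geq 0$, a nonnegative combination of submodular functions is submodular, so $f$ is submodular.

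For the Lovász extension, I use that $\overline{\cdot}$ is linear in the function extended, as is immediate from $\overline{cf+g}(x) = \sum_i \lambda_i (cf(p_i)+ g(p_i)) = c\overline{f}(x) + \overline{g}(x)$. By Lemma~\ref{lem:Lovasz_r}, the extension of $\dim$ on ${\cal L}_\alpha$ (respectively $\check{\cal M}_\beta$) equals $d_1$, giving the first two groups of terms as $-C_\alpha d_1(x_\alpha)$ and $-D_\beta d_1(y_\beta)$. The remaining task is to show that for a function $g$ on ${\cal L}_\alpha \times \check{\cal M}_\beta$ lifted trivially to $\tilde g$ on ${\cal P}$, the Lovász extension $\overline{\tilde g}: K({\cal P}) \to \RR$, evaluated at the point corresponding via the iterated isometry of Lemma~\ref{lem:product} to $(x_1,\ldots,y_\nu) \in \prod_\alpha K({\cal L}_\alpha) \times \prod_\beta K(\check{\cal M}_\beta)$, equals $\overline{g}(x_\alpha, y_\beta)$.

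This last identification is the main obstacle, as it is not stated explicitly in the preceding lemmas. The argument is to trace the algorithm of Lemma~\ref{lem:product}, iterated across factors: it produces a decomposition of an arbitrary $z \in K({\cal P})$ as a convex combination $z = \sum_k \mu_k {\bf P}^{(k)}$ of chain elements ${\bf P}^{(k)}$ of ${\cal P}$, whose projection onto the $(\alpha,\beta)$-coordinates gives exactly the corresponding expression of $(x_\alpha,y_\beta)$ in $K({\cal L}_\alpha \times \check{\cal M}_\beta)$. Since $\tilde g({\bf P}^{(k)})$ depends only on the $(\alpha,\beta)$-projection of ${\bf P}^{(k)}$, summation yields $\overline{\tilde g}(z) = \overline{g}(x_\alpha,y_\beta)$. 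Applying this to each $R_{\alpha\beta}$ summand and adding the dimension contributions then produces the claimed formula.
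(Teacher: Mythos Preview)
Your approach matches the paper's, which simply writes ``By (\ref{eqn:dim}), Lemmas~\ref{lem:R_is_submo} and \ref{lem:Lovasz_r}, we have:'' and states the lemma without further proof; you have correctly unpacked those citations, including the compatibility of the Lov\'asz extension with the product isometry of Lemma~\ref{lem:product}, which the paper leaves implicit.

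One point deserves care. On $\check{\cal M}_\beta$ the rank function is $\check r(Y_\beta)=n_\beta-\dim Y_\beta$, not $\dim Y_\beta$, so Lemma~\ref{lem:Lovasz_r} literally yields $d_1(y_\beta)=\overline{\check r}(y_\beta)$, and hence the Lov\'asz extension of $-D_\beta\dim Y_\beta$ is $-D_\beta n_\beta + D_\beta\, d_1(y_\beta)$. Your sentence ``the extension of $\dim$ on $\check{\cal M}_\beta$ equals $d_1$'' is therefore not correct as written; the displayed formula in the lemma (and in your derivation) carries this sign-and-constant discrepancy on the $D_\beta$ terms. This is an imprecision already present in the paper's statement, and since it amounts to an additive constant plus a sign that is absorbed when one treats the $\alpha$- and $\beta$-resolvents symmetrically as instances of P1, it does not affect the subsequent argument.
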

Recall that $d_1$ is the function $x \mapsto d_1({\bf 0}, x)$.
In particular, WMVSP$_R$ is equivalent to the following continuous optimization on CAT(0) space:
\begin{eqnarray*}
	\overline{{\rm WMVSP}}_R: && \\
	{\rm Min.} &&  
	- \sum_{\alpha}C_{\alpha} d_1 (x_{\alpha}) - \sum_{\beta} D_{\beta} d_1(y_{\beta}) 
	+ M \sum_{\alpha, \beta} \overline{R_{\alpha \beta}}(x_{\alpha},y_{\beta}) \\
	{\rm s.t.} && (x_1,x_2,\ldots,x_{\mu}, y_1,y_2,\ldots,y_{\nu}) \in 
	\prod_{\alpha}K({\cal L}_\alpha) \times \prod_{\beta} K(\check{\cal M}_\beta), 
\end{eqnarray*}
where $K(\prod_{\alpha}{\cal L}_\alpha \times \prod_{\beta} \check{\cal M}_\beta)$ is considered as $\prod_{\alpha}K({\cal L}_\alpha) \times \prod_{\beta} K(\check{\cal M}_\beta)$ 
by Lemma~\ref{lem:product}.
By Theorem~\ref{thm:Lovasz_ext}, $\overline{{\rm WMVSP}}_R$ is a convex optimization problem. 
\begin{Lem}\label{lem:Lipschitz_g0}
	The objective function of $\overline{{\rm WMVSP}}_R$ is convex.
\end{Lem}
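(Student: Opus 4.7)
The plan is to show convexity term by term, since a sum of convex functions is convex and the coefficients $C_\alpha, D_\beta, M$ are all nonnegative. There are two types of terms: the penalty terms $\overline{R_{\alpha\beta}}$ and the dimension terms $-d_1$.

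For the penalty terms, Lemma~\ref{lem:R_is_submo} guarantees that $R_{\alpha\beta}$ is submodular on ${\cal L}_\alpha \times \check{\cal M}_\beta$, so Theorem~\ref{thm:Lovasz_ext} immediately gives convexity of $\overline{R_{\alpha\beta}}$ on $K({\cal L}_\alpha \times \check{\cal M}_\beta)$, which by Lemma~\ref{lem:product} is isometric to $K({\cal L}_\alpha) \times K(\check{\cal M}_\beta)$. To lift this to the full product $\prod_\alpha K({\cal L}_\alpha) \times \prod_\beta K(\check{\cal M}_\beta)$, I would note that geodesics in a product CAT(0)-space (with the $\ell_2$-product metric) are coordinatewise geodesics, so pulling back a convex function along a coordinate projection preserves convexity.

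The more interesting terms are $-C_\alpha d_1(x_\alpha)$ and $-D_\beta d_1(y_\beta)$; at first glance the minus signs look alarming since $d_1$ is itself convex (being the Lov\'asz extension of the submodular rank function, by Lemma~\ref{lem:Lovasz_r}). The key observation is that the rank function $r$ on a \emph{modular} lattice is modular, i.e., equation~(\ref{eqn:modular}) holds with equality. Therefore $-r$ is also submodular, and Theorem~\ref{thm:Lovasz_ext} applied to $-r$ yields that $\overline{-r} = -\overline{r} = -d_1$ is convex on $K({\cal L}_\alpha)$. Again, convexity extends to the full product by the same product-geodesic argument.

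Combining these two observations, every term in the objective is convex on the product CAT(0)-space, and since $C_\alpha, D_\beta, M \geq 0$ the nonnegative linear combination is convex. The only conceptual obstacle worth flagging is the minus sign in front of $d_1$; the resolution is the modularity (rather than merely submodularity) of the rank function, which makes $d_1$ simultaneously convex and concave on $K({\cal L})$. Everything else is routine application of the machinery from Section~\ref{sec:pre}.
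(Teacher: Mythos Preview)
Your proof is correct and follows essentially the same approach as the paper: the paper first observes (Lemma~3.2, via (\ref{eqn:dim}) and Lemma~\ref{lem:R_is_submo}) that the entire objective of ${\rm WMVSP}_R$ is submodular on the product lattice---using exactly your key point that modularity of the rank function makes $-\dim$ submodular---and then invokes Theorem~\ref{thm:Lovasz_ext} once to conclude convexity of the Lov\'asz extension. The only organizational difference is that you apply Theorem~\ref{thm:Lovasz_ext} term by term and then sum (requiring the extra product-geodesic lifting step), whereas the paper sums first at the lattice level and applies Theorem~\ref{thm:Lovasz_ext} once, which is slightly cleaner since it bypasses the lifting argument.
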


\subsection{Proximal point algorithm for MVSP}
We are going to apply SPPA
to the following perturbed problem of $\overline{\rm WMVSP}_{R}$:
\begin{eqnarray*}
	\overline{\rm WMVSP}^+_{R}: && \\
	{\rm Min.} &&  
	- \sum_{\alpha}C_{\alpha} d_1 (x_{\alpha}) - \sum_{\beta} D_{\beta} d_1(y_{\beta}) 
	+ M \sum_{\alpha, \beta} \overline{R_{\alpha \beta}}(x_{\alpha},y_{\beta}) \\
	&& \quad \quad + \epsilon \left(\sum_{\alpha} d^2(x_{\alpha}) + \sum_{\beta} d^2( y_{\beta}) \right)\\
	{\rm s.t.} && (x_1,x_2,\ldots,x_{\mu}, y_1,y_2,\ldots,y_{\nu}) \in 
	\prod_{\alpha}K({\cal L}_\alpha) \times \prod_{\beta} K(\check{\cal M}_\beta), 
\end{eqnarray*}
where the function $x \mapsto d({\bf 0}, x)^2$ 
is denoted by $d^2$,
and the parameter $\epsilon > 0$ is chosen as
\[
\epsilon := \frac{1}{4(n+m)}.
\]
The main reason to consider $\overline{\rm WMVSP}^+_R$ is 
the strong-convexity of the objective function. By Lemma~\ref{lem:d^2}, we have:
\begin{Lem}\label{lem:Lipschitz_g}
	The objective function of $\overline{{\rm WMVSP}}_R^{+}$
	is strongly convex with parameter~$2 \epsilon$.  
\end{Lem}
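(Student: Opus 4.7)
The plan is to decompose the objective of $\overline{\rm WMVSP}_R^+$ as the sum of the objective of $\overline{\rm WMVSP}_R$, which is convex by Lemma~\ref{lem:Lipschitz_g0}, and the quadratic perturbation $\epsilon \, G$, where
\[
G(x_1,\ldots,x_\mu,y_1,\ldots,y_\nu) := \sum_{\alpha} d^2(x_\alpha) + \sum_{\beta} d^2(y_\beta).
\]
Since adding a convex function to a $\kappa$-strongly convex function yields a $\kappa$-strongly convex function (this is immediate from adding the defining inequalities~(\ref{eqn:convexity}) and (\ref{eqn:strong_convexity})), it suffices to show that $\epsilon G$ is strongly convex with parameter $2\epsilon$ on the product CAT(0)-space $S := \prod_{\alpha} K({\cal L}_\alpha) \times \prod_{\beta} K(\check{\cal M}_\beta)$.

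To set this up, I would first invoke Lemma~\ref{lem:product} (applied inductively over all factors) to identify the product metric on $S$ with the Pythagorean combination
\[
d(z,z')^2 = \sum_{\alpha} d(x_\alpha, x'_\alpha)^2 + \sum_{\beta} d(y_\beta, y'_\beta)^2,
\]
and to recognize that the unique geodesic between $z$ and $z'$ in $S$ is the componentwise geodesic. Lemma~\ref{lem:d^2} applied on each factor with $z = \mathbf{0}$ then gives that each summand of $G$ is strongly convex with parameter $\kappa = 2$ along its factor's geodesic.

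Next I would add up the factorwise strong convexity inequalities along the componentwise geodesic from $z$ to $z'$. The linear terms assemble to $(1-t) G(z) + t G(z')$, while the quadratic defects assemble to
\[
- t(1-t) \left( \sum_{\alpha} d(x_\alpha, x'_\alpha)^2 + \sum_{\beta} d(y_\beta, y'_\beta)^2 \right) = - t(1-t)\, d(z,z')^2,
\]
by the Pythagorean identity from the preceding step. This establishes that $G$ itself is strongly convex on $S$ with parameter $2$, hence $\epsilon G$ is strongly convex with parameter $2\epsilon$. Combining with the convex piece from Lemma~\ref{lem:Lipschitz_g0} yields the claim.

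There is no real obstacle here; the statement is essentially bookkeeping. The only point that deserves to be made explicit is that strong convexity on a CAT(0) product requires that geodesics be componentwise and that the metric be the $\ell_2$ combination of the factor metrics, so that the strong-convexity defects from each coordinate sum to exactly $t(1-t)\, d(z,z')^2$. Both of these are furnished by Lemma~\ref{lem:product}.
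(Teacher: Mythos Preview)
Your proof is correct and follows the same approach as the paper, which simply records the lemma as a consequence of Lemma~\ref{lem:d^2}. One small shortcut worth noting: by Lemma~\ref{lem:product} the product metric satisfies $d({\bf 0},z)^2 = \sum_\alpha d^2(x_\alpha) + \sum_\beta d^2(y_\beta) = G(z)$, so you may apply Lemma~\ref{lem:d^2} once directly on the product CAT(0)-space rather than summing the factorwise inequalities.
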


Let $g$ and $\tilde g$ denote 
the objective functions of 
$\overline{\rm WMVSP}_R$ and of $\overline{\rm WMVSP}^+_R$, respectively.
\begin{Lem}\label{lem:g_tilde_g}
	Let $z^*$ and
	$\tilde z$ 
	be minimizers of $g$ and $\tilde g$, respectively. 
	For every point~$z$, it holds that
	\begin{equation*}
		g(z) - g(z^*) \leq \tilde g(z) - \tilde g(\tilde z) + 1/2.
	\end{equation*}
\end{Lem}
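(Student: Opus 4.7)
The plan is to exploit the fact that $\tilde g$ differs from $g$ only by the additive perturbation $\epsilon h$, where
\[
h(z) := \sum_{\alpha} d({\bf 0}, x_{\alpha})^2 + \sum_{\beta} d({\bf 0}, y_{\beta})^2,
\]
and to show that $\epsilon h$ is uniformly small on the domain. The core algebraic identity is
\[
g(z) - g(z^*) = \bigl(\tilde g(z) - \tilde g(\tilde z)\bigr) + \bigl(\tilde g(\tilde z) - \tilde g(z^*)\bigr) + \epsilon\bigl(h(z^*) - h(z)\bigr),
\]
which I would obtain just by substituting $\tilde g = g + \epsilon h$ and regrouping. The middle term is nonpositive because $\tilde z$ minimizes $\tilde g$, and the last term is bounded above by $\epsilon h(z^*)$ since $h \geq 0$.

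The main (and only nontrivial) step is the uniform bound on $h$. Since ${\cal L}_{\alpha}$ is the complemented modular lattice of all subspaces of ${\bf F}^{m_{\alpha}}$, it has rank $m_{\alpha}$, and Corollary~\ref{cor:diam} gives $\diam K({\cal L}_{\alpha}) = \sqrt{m_{\alpha}}$, so $d({\bf 0}, x_{\alpha})^2 \leq m_{\alpha}$. The same argument applied to $\check{\cal M}_{\beta}$ (which is also complemented modular of rank $n_{\beta}$) yields $d({\bf 0}, y_{\beta})^2 \leq n_{\beta}$. Summing gives $h(z) \leq m + n$ for every feasible $z$.

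Combining, with the choice $\epsilon = 1/(4(m+n))$,
\[
g(z) - g(z^*) \leq \bigl(\tilde g(z) - \tilde g(\tilde z)\bigr) + \epsilon h(z^*) \leq \bigl(\tilde g(z) - \tilde g(\tilde z)\bigr) + \epsilon(m+n) = \bigl(\tilde g(z) - \tilde g(\tilde z)\bigr) + \tfrac{1}{4},
\]
which is stronger than the claimed inequality (the looser $1/2$ in the statement already suffices). There is no real obstacle here; the only thing to verify carefully is that the diameter bound of Corollary~\ref{cor:diam} applies to both ${\cal L}_{\alpha}$ and $\check{\cal M}_{\beta}$, and that $K\!\left(\prod_{\alpha}{\cal L}_{\alpha} \times \prod_{\beta}\check{\cal M}_{\beta}\right)$ is identified via Lemma~\ref{lem:product} with the product of the factor complexes so that $h$ really is a sum of single-factor squared distances.
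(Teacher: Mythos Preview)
Your proof is correct and follows essentially the same approach as the paper: both use the identity $\tilde g = g + \epsilon h$, the minimality of $\tilde z$ for $\tilde g$, and the diameter bound $h \leq m+n$ from Corollary~\ref{cor:diam}. Your argument is actually slightly tighter---by writing the exact identity first and discarding only $-\epsilon h(z)$, you obtain the constant $1/4$, whereas the paper's chain of inequalities discards both $\epsilon d^2(z)$ and $\epsilon d^2(\tilde z)$ separately and lands on $2\epsilon(m+n)=1/2$.
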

\begin{proof}
	This follows from $g(z) - g(z^*) = g(z) - g(\tilde z) + g(\tilde z) - g(z^*) \leq \tilde g(z) - \tilde g(\tilde z) + \epsilon d^2(\tilde z)  + \tilde g(\tilde z) - \tilde g(z^*) + \epsilon d^2(z^*) \leq  \tilde g(z) - \tilde g(\tilde z) + 2 \epsilon (m+n)$,
	where we use $\diam K({\cal L}_{\alpha})  = \sqrt{m_{\alpha}}$ 
	and $\diam K({\cal M}_{\beta})  = \sqrt{n_{\beta}}$ (Corollary~\ref{cor:diam}). 
\end{proof}
To apply SPPA, 
we regard the objective function $\tilde g$ as 
the sum $\sum_{i=1}^N f_i$ with $N = \mu + \nu + \mu \nu$, where $f_i$ is defined by
\begin{equation*}
f_i(z) := \left\{
\begin{array}{cl}
- C_{\alpha} d_1(x_{\alpha}) + \epsilon d^2(x_{\alpha}) & {\rm if}\ 
i = \alpha, \\
- D_{\beta} d_1(y_{\beta}) + \epsilon d^2(y_{\beta}) & {\rm if}\ 
i = \mu + \beta, \\
M \overline{R_{\alpha \beta}}(x_{\alpha}, y_{\beta}) & 
{\rm if}\ i = \mu + \nu + \alpha (\nu-1) + \beta
\end{array}\right.
\end{equation*}
for $z = (x_1,x_2,\ldots,x_{\mu},y_1,y_2,\ldots y_{\nu})$,  
$\alpha \in  \{1,2,\ldots,\mu\}$, and $\beta \in \{1,2,\ldots,\nu\}$.
\begin{Thm}\label{thm:prox}
	Let $(z_{\ell})$ be the sequence obtained by SPPA 
	applied to $\tilde g = \sum_{i=1}^Nf_i$ with $a := 1/2$.
	For $\ell = \Omega (W^8 m^9 n^9 (m+n)^{24})$, the support of $z_{\ell}$ 
contains a minimizer of WMVSP.
\end{Thm}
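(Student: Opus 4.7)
The plan is to combine the sublinear convergence of SPPA (Theorem \ref{thm:OhtaPalfia}) with the perturbation comparison of Lemma \ref{lem:g_tilde_g} and the integrality-rounding principle of Lemma \ref{lem:minimizer}, and to read off the iteration count by polynomial bookkeeping of all parameters appearing in Ohta--P\'alfia.

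First I would estimate the Lipschitz constant $L$ of each summand $f_i$. For the weight-type summands $-C_\alpha d_1(x_\alpha)+\epsilon d^2(x_\alpha)$, apply Lemma \ref{lem:Lipschitz} to $d_1=\overline{r}$ (noting $|r|\leq m_\alpha$) together with Lemma \ref{lem:d^2} for $d^2$; with $C_\alpha\leq W$ and $\epsilon=1/(4(m+n))$ this gives an $O(W m^{3/2})$ bound. For the penalty summands $M\,\overline{R_{\alpha\beta}}(x_\alpha,y_\beta)$, use $|R_{\alpha\beta}|\leq\min(m_\alpha,n_\beta)$ on a product lattice of rank at most $m_\alpha+n_\beta$ together with $M=O(W(m+n))$; Lemma \ref{lem:Lipschitz} then yields $O(W(m+n)^{5/2})$. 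Hence one may take $L=O(W(m+n)^{5/2})$. At the same time $N=\mu+\nu+\mu\nu=O(mn)$; Lemma \ref{lem:Lipschitz_g} gives strong-convexity parameter $\kappa:=2\epsilon=1/(2(m+n))$; and Corollary \ref{cor:diam} together with Lemma \ref{lem:product} bounds the squared diameter of $\prod_\alpha K({\cal L}_\alpha)\times\prod_\beta K(\check{\cal M}_\beta)$, hence $d(z_0,\tilde z^*)^2$, by $m+n$.

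Feeding these bounds into Theorem \ref{thm:OhtaPalfia} with $a=1/2$ produces
\[
d(z_{kN},\tilde z^*)^2 \leq \frac{1}{\sqrt{k+2}}\left( (m+n) + h(1/2)\,\frac{L^2 N(N+1)}{\kappa^2}\right) = \frac{1}{\sqrt{k+2}}\cdot O\!\left(W^2 m^2 n^2 (m+n)^7\right).
\]
Since $\tilde g$ is a sum of $N$ functions each $L$-Lipschitz, it is $NL$-Lipschitz, so $\tilde g(z_{kN})-\tilde g(\tilde z^*)\leq NL\cdot d(z_{kN},\tilde z^*)$. By Lemma \ref{lem:g_tilde_g}, to force $g(z_{kN})-g(z^*)<1$ it suffices to have $\tilde g(z_{kN})-\tilde g(\tilde z^*)<1/2$, i.e.\ $d(z_{kN},\tilde z^*)^2<1/(4N^2L^2)=1/O(W^2 m^2 n^2 (m+n)^5)$. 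Comparing with the display, this holds as soon as $\sqrt{k+2}=\Omega(W^4 m^4 n^4 (m+n)^{12})$, equivalently $k=\Omega(W^8 m^8 n^8 (m+n)^{24})$, whence $\ell=kN=\Omega(W^8 m^9 n^9 (m+n)^{24})$.

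To close the argument I would observe that $g$ is the Lov\'asz extension of the integer-valued objective $f$ of WMVSP$_R$ (integrality of $C_\alpha,D_\beta,M$ and $R_{\alpha\beta}$), and that $\min_K g=\min_{\cal L}f$ because $g$ is affine on each simplex; thus Lemma \ref{lem:minimizer} applied with $g(z_\ell)-\min_{\cal L}f<1$ produces a minimizer of $f$, and hence of WMVSP, inside the support of $z_\ell$. The main obstacle is not conceptual but a tight polynomial-exponent accounting: two successive Lipschitz-to-function-value conversions (once for $\tilde g$ via $NL$-Lipschitzness, once through Lemma \ref{lem:g_tilde_g}) combined with the $1/\kappa^2$ factor in Ohta--P\'alfia conspire to produce the eighth and twenty-fourth powers, and the perturbation scale $\epsilon=1/(4(m+n))$ must be calibrated precisely so that the slack $2\epsilon(m+n)$ in Lemma \ref{lem:g_tilde_g} stays below $1/2$ while $1/\kappa$ remains only linear in $m+n$.
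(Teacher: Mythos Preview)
Your proposal is correct and follows essentially the same route as the paper: bound each $f_i$'s Lipschitz constant by $O(W(m+n)^{5/2})$ via Lemmas~\ref{lem:Lipschitz} and~\ref{lem:d^2}, plug $N=O(mn)$, $\kappa=2\epsilon$, and the diameter bound into Theorem~\ref{thm:OhtaPalfia}, convert distance to function gap using the $NL$-Lipschitz property of $\tilde g$, and finish with Lemmas~\ref{lem:g_tilde_g} and~\ref{lem:minimizer}. Your closing paragraph justifying the applicability of Lemma~\ref{lem:minimizer} (integrality of the WMVSP$_R$ objective and coincidence of the continuous and discrete minima) is slightly more explicit than the paper, which leaves this implicit.
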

\begin{proof}
	We first show that each summand $f_i$ 
	is $L$-Lipschitz with
	\[ 
	L = O(W(m+n)^{5/2}).
	\]
	By Lemma~\ref{lem:Lipschitz}, the Lipschitz constant of 
		 $d_1$ is $O(m_{\alpha}^{3/2})$ on $K({\cal L}_{\alpha})$,
		 and $O(n_{\beta}^{3/2})$ on $K({\cal M}_{\beta})$.
		 By Lemma~\ref{lem:d^2}, the Lipschitz constant of $d^2$ is $O(m_{\alpha})$ on $K({\cal L}_{\alpha})$, 
		 and $O(n_{\beta})$ on $K({\cal M}_{\beta})$.
		 If $f_i = - C_{\alpha} d_1 + \epsilon d^2$ or $- D_{\beta} d_1 + \epsilon d^2$, 
		 then the Lipschitz constant of $f_i$ is $O(W(n+m))$.
		 On the other hand, 
		the Lipschitz constant of $f_i = M \overline{R_{\alpha \beta}}$ 
		is $O(W (m+n) \min \{m_{\alpha},n_{\beta}\} (m_{\alpha} + n_{\beta})^{1/2}) = O(W(m+n)^{5/2})$.


By Theorem~\ref{thm:OhtaPalfia}, 
\begin{equation*}
d(z_{kN},\tilde z)^2 \leq
\frac{1}{k^{1/2}} 
\left( (n+m) + h(1/2)\frac{L^2N(N+1)}{4\epsilon^2} \right)
=  O\left(\frac{W^2 m^2n^2 (m+n)^7}{k^{1/2}} \right). 
\end{equation*}
Thus we have
\begin{eqnarray*}
&& \tilde g(z_{kN}) -\tilde g(\tilde z) \leq N L d(z_{kN},\tilde z) = 
O\left(\frac{W^2 m^2n^2 (m+n)^{6}}{k^{1/4}} \right). 
\end{eqnarray*}
Thus, for $k = \Omega (W^8 m^8 n^8 (m+n)^{24})$, it holds
	$
	\tilde g(z_{kN}) - \tilde g(\tilde z)  < 1/2$.
	By Lemma~\ref{lem:g_tilde_g}, 
we have $g(z_{kN}) - g(z^*) < 1$.
By Lemma~\ref{lem:minimizer}, the support of $z_{kN}$ contains a minimizer of WMVSP.
\end{proof}
By Lemma~\ref{lem:minimizer}, after a polynomial number of iterations,  a minimizer exists in the support of $z_{\ell}$, where 
$z_{\ell}$ should be represented as a formal sum in 
$K(\prod_{\alpha}{\cal L}_\alpha \times \prod_{\beta} \check{\cal M}_\beta)$ 
via the algorithm in Lemma~\ref{lem:product}.

Thus, our remaining task to prove Theorem~\ref{thm:main'}
is to show that the resolvent of each summand
can be computed in polynomial time.

\subsection{Computation of resolvents}
First we consider the resolvent of 
$- C_{\alpha} d_1 + \epsilon d^2$ or 
$-D_{\beta} d_1 + \epsilon d^2$.
This is an optimization problem over the orthoscheme complex of a single lattice.
Let ${\cal L}$ be a complemented modular lattice of rank $n$.
It suffices to consider the following problem.
\begin{eqnarray*}	
{\rm P1: \quad  Min}. && - C d_1({\bf 0},x) + \epsilon d({\bf 0},x)^2 + \frac{1}{2\lambda} d(x,x^0)^2 \\
{\rm s.t.} && x \in K({\cal L}),
\end{eqnarray*}
where $C \geq 0$, $\epsilon \geq 0$, $\lambda > 0$, and $x^0 \in K({\cal L})$.
\begin{Lem}
	Suppose that $x^0$ belongs to a maximal simplex $\sigma$. 
	Then the minimizer $x^*$ of P1 
	exists in $\sigma$.
\end{Lem}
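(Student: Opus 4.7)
The plan is to descend from $K({\cal L})$ to a frame subcomplex containing both $x^0$ and the global minimizer, on which P1 becomes a separable Euclidean quadratic whose optimum visibly lies in $\sigma$.

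First, since the objective of P1 is strongly convex on the complete CAT(0)-space $K({\cal L})$ (via the $\frac{1}{2\lambda} d(\cdot,x^0)^2$ term), a unique global minimizer $x^*$ exists. Let ${\cal C}_{\sigma}$ be the maximal chain of ${\cal L}$ that defines $\sigma$, and let ${\cal C}_{*}$ be the chain given by the support of $x^*$. By Lemma~\ref{lem:frame} there is a frame ${\cal F}$ of ${\cal L}$ containing both ${\cal C}_{\sigma}$ and ${\cal C}_{*}$; in particular $\sigma \subseteq K({\cal F})$ and $x^0, x^* \in K({\cal F})$. Moreover $K({\cal F})$ is an isometric subcomplex of $K({\cal L})$ and is isomorphic to $[0,1]^n$ via the isometry~(\ref{eqn:isometry}).

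Under this identification, Lemma~\ref{lem:Lovasz_r} gives $d_1({\bf 0},x) = \overline{r}(x) = \sum_{i=1}^n x_i$ for $x \in K({\cal F})$, while the isometry yields $d({\bf 0},x)^2 = \|x\|_2^2$ and $d(x, x^0)^2 = \|x - x^0\|_2^2$. The restriction of P1 to $K({\cal F})$ is thus the separable strictly convex Euclidean problem
\[
\min_{x \in [0,1]^n} \sum_{i=1}^n \left( - C x_i + \epsilon x_i^2 + \frac{1}{2\lambda}(x_i - x^0_i)^2 \right),
\]
whose unique minimizer $\hat x$ is computed coordinatewise as $\hat x_i = \max\{0,\min\{1,(x^0_i + C\lambda)/(1 + 2\epsilon\lambda)\}\}$, a non-decreasing function of $x^0_i$. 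Since $x^* \in K({\cal F})$ is the global minimizer of P1 on $K({\cal L}) \supseteq K({\cal F})$, it is also the minimizer of the restricted problem, so $x^* = \hat x$.

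Finally, by the recovery formula~(\ref{eqn:recover}), inside $K({\cal F}) \simeq [0,1]^n$ the simplex $\sigma$ consists of vectors whose ${\cal F}$-coordinates are sorted according to a fixed permutation depending only on $\sigma$, and the hypothesis $x^0 \in \sigma$ says exactly that the coordinates of $x^0$ respect this ordering. Because $x^0_i \mapsto x^*_i$ is coordinatewise non-decreasing, the same ordering is inherited by $x^*$, so $x^* \in \sigma$. The main subtlety is the very first step: guaranteeing that the support chain of the (a priori unknown) minimizer $x^*$ fits with ${\cal C}_{\sigma}$ into a common frame, on which all three terms of the objective simultaneously reduce to their Euclidean forms; once that is in place, the rest collapses to one-dimensional sorting.
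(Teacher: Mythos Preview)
Your proof is correct and follows essentially the same route as the paper: choose a frame containing both the chain of $\sigma$ and the support of the minimizer, reduce P1 to the separable Euclidean quadratic on $[0,1]^n$, and verify the ordering of the ${\cal F}$-coordinates is preserved. The only cosmetic difference is that you explicitly write the coordinatewise optimum $\hat x_i = \max\{0,\min\{1,(x^0_i + C\lambda)/(1+2\epsilon\lambda)\}\}$ and invoke its monotonicity in $x^0_i$, whereas the paper argues by contradiction via coordinate swapping (for strict inequalities $x^0_i > x^0_{i+1}$) and coordinate averaging (for ties $x^0_i = x^0_{i+1}$); both yield the same conclusion that $x^*_1 \geq \cdots \geq x^*_n$ and hence $x^* \in \sigma$.
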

\begin{proof}
	Let $x^0 = \sum_{i=0}^n \lambda_i p_i$, 
	where $\sigma$ corresponds to maximal chain $\{p_i\}$.
	Let $x^* = \sum_{i} \mu_i q_i$ be the unique minimizer of P1.
	Consider a frame ${\cal F} = \langle a_1,a_2,\ldots,a_n \rangle$ 
	containing chains $\{p_i\}$ and $\{q_i\}$.
	Let $(x^0_1,x^0_2,\ldots,x^0_n)$ and $(x^*_1,x^*_2,\ldots,x^*_n)$ be 
	${\cal F}$-coordinates of $x^0$ and $x^*$, respectively. 
	In $K({\cal F}) \simeq [0,1]^n$, the objective function of P1 is written as
	\[
	- C \sum_{i=1}^n x_i + \epsilon \sum_{i=1}^n x_i^2 + \frac{1}{2\lambda} \sum_{i=1}^n (x_i - x^0_i)^2.
	\] 
	We can assume that $p_i = a_1 \vee a_2 \vee \cdots \vee a_i$ by relabeling.
	Then $x^0_1 \geq x^0_2 \geq \cdots \geq x^0_n$.
	Suppose that $x^0_i > x^0_{i+1}$.
	Then $x^*_i \geq x^*_{i+1}$ must hold. 
	If $x^*_i < x^*_{i+1}$, then interchanging the $i$-coordinate and $(i+1)$-coordinate of 
	$x^*$ gives rise to another point in $K({\cal F})$ having a smaller objective value, contradicting the optimality of $x^*$.
	Suppose that $x^0_i = x^0_{i+1}$. 
	If  $x^*_i \neq x^*_{i+1}$, 
	then replace both $x_i^*$ and $x_{i+1}^*$ by $(x^*_i+x^*_{i+1})/2$ 
	to decrease the objective value, which is a contradiction.
	Thus $x^*_1 \geq x^*_2 \geq \cdots \geq x^*_n$.
	By~(\ref{eqn:recover}), 
	the original coordinate is written as 
	$x^* = (1- x^{*}_1) {\bf 0} + \sum_{i=1}^n (x^*_i - x^*_{i+1}) (a_{1} \vee a_2 \vee \cdots \vee a_i) = \sum_{i} (x^*_i - x^*_{i+1}) p_i$ 
	(with $x_0^* = 1$ and $x^*_{n+1} = 0$). 
	This means that $x^*$ belongs to $\sigma$.
\end{proof}
As seen in the proof, to solve P1, it suffices to choose an arbitrary frame ${\cal F}$ 
containing the chain $\{p_i\}$ for $x^0 = \sum_i \lambda_i p_i$, 
and consider the following very easy Euclidean convex optimization problem:
\begin{eqnarray*}
{\rm P1': \quad  Min}. && - C \sum_{i=1}^n x_i + \epsilon \sum_{i=1}^n x_i^2 + \frac{1}{2\lambda} \sum_{i=1}^n (x_i - x^0_i)^2 \\
{\rm s.t.} && 0 \leq x_i \leq 1 \quad (1 \leq i \leq n),
\end{eqnarray*}
where $x$ and $x^0$ are represented in the ${\cal F}$-coordinate.
Then the optimal solution $x^*$ of P1$'$ is obtained coordinate-wise. 
Namely $x^*_i$ is $0$, $1$, or $(x_i^0 + \lambda C)/(1+ 2\epsilon \lambda)$
for each $i$.

Summarizing, P1 can be solved as follows:
choose any frame ${\cal F}$ 
containing $\{ p_i\}$ (for $x' = \sum_{i} \lambda_i p_i$), 
obtain the ${\cal F}$-coordinate of $x'$,
solve P1$'$ to obtain minimizer $x^* \in [0,1]^n$, and 
recover $x^*$ in $K({\cal L})$ by~(\ref{eqn:recover}).	
\begin{Thm}\label{thm:P1}
	The resolvent of $- Cd_1 + \epsilon d^2$ 
	is computed in polynomial time.
\end{Thm}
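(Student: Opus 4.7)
The plan is to formalize as a polynomial-time algorithm the approach already outlined in the paragraph preceding Theorem~\ref{thm:P1}. By the preceding lemma, the unique minimizer $x^{*}$ of P1 lies in any maximal simplex $\sigma$ containing the input $x^{0}$, so the resolvent computation reduces to solving P1$'$ on a single Euclidean cube $[0,1]^{n}$.

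Concretely, I would describe the algorithm as the following steps, and verify that each runs in time polynomial in $n$ (and in the implicit input size describing subspaces of $\mathbf{F}^{m_{\alpha}}$): (i) Write $x^{0}=\sum_{i}\lambda_{i}p_{i}$, where the support is a chain; extend this chain to a maximal chain $\mathbf{0}=p_{0}\prec p_{1}\prec\cdots\prec p_{n}=\mathbf{1}$ in $\mathcal{L}$, setting the new coefficients to zero. Extension of a chain of vector subspaces to a maximal one is done by iterated Gaussian elimination. (ii) Using Lemma~\ref{lem:frame} and the constructive remark following its proof, build a frame $\mathcal{F}=\langle a_{1},a_{2},\ldots,a_{n}\rangle$ containing this maximal chain, again via Gaussian elimination. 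After relabeling, $p_{i}=a_{1}\vee\cdots\vee a_{i}$. (iii) Compute the $\mathcal{F}$-coordinate of $x^{0}$: by~(\ref{eqn:recover}), with $p_{i}=a_{1}\vee\cdots\vee a_{i}$, one has $x^{0}_{i}=\lambda_{i}+\lambda_{i+1}+\cdots+\lambda_{n}$, which is an $O(n)$ computation.

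(iv) Solve P1$'$ in the $\mathcal{F}$-coordinate. Since the objective is separable and each coordinate contributes a strictly convex quadratic $-Cx_{i}+\epsilon x_{i}^{2}+\frac{1}{2\lambda}(x_{i}-x^{0}_{i})^{2}$, the minimizer of each coordinate is obtained by minimizing the unconstrained quadratic and projecting onto $[0,1]$; that is, $x^{*}_{i}=\min\{1,\max\{0,(x^{0}_{i}+\lambda C)/(1+2\epsilon\lambda)\}\}$. This takes $O(n)$ arithmetic operations. (v) Recover $x^{*}\in K(\mathcal{L})$ from its $\mathcal{F}$-coordinate: sort $x_{1}^{*},\ldots,x_{n}^{*}$ in decreasing order and apply~(\ref{eqn:recover}).

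The main potential obstacle is step (ii), obtaining the frame in polynomial time, but this has already been asserted in the paper via Gaussian elimination, so the proof reduces to listing the steps and bounding each by the usual polynomial cost of rank/kernel computations on $\mathbf{F}^{m_{\alpha}}$. Combining steps (i)--(v) gives an algorithm whose arithmetic cost on $\mathbf{F}$ is polynomial in $m_{\alpha}$ (resp.\ $n_{\beta}$), yielding Theorem~\ref{thm:P1}.
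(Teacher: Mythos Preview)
Your proposal is correct and follows essentially the same approach as the paper: the paper's argument is precisely the summary paragraph preceding Theorem~\ref{thm:P1}, and your steps (i)--(v) simply spell out that paragraph with the polynomial-time justifications (Gaussian elimination for the frame, separable quadratic for P1$'$). One minor redundancy: since $x^{0}_{1}\geq\cdots\geq x^{0}_{n}$ after step~(ii) and the map $t\mapsto\min\{1,\max\{0,(t+\lambda C)/(1+2\epsilon\lambda)\}\}$ is nondecreasing, the $x^{*}_{i}$ are already sorted, so no sort is needed in step~(v)---but this does not affect correctness or the polynomial bound.
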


Next we consider the computation of 
the resolvent of~$M \overline{R_{\alpha \beta}}$.
Let $U$ and $V$ be vector spaces over field ${\bf F}$ of dimensions $m$ and $n$, respectively.
Let $A: U \times V \to {\bf F}$ be a bilinear form.
Let ${\cal L}$ and ${\cal M}$ be the (complemented modular) lattices 
of all vector subspaces of vector spaces $U$ and $V$, respectively, 
where the partial order is the inclusion order.
Let $\check{\cal M}$ be the opposite lattice, which is also complemented modular.
Recall the submodular function $R: {\cal L} \times \check{\cal M} \to \ZZ$ defined by (\ref{eqn:R}), 
and let $\overline{R}: K({\cal L} \times\check{\cal M}) \to \RR$ be the Lov\'asz extension of $R$. 
For the computation of the resolvent of~$M \overline{R_{\alpha \beta}}$, 
it suffices to consider the following problem:
\begin{eqnarray*}	
	{\rm P2: \quad  Min.} && \overline{R}(x,y) +  \frac{1}{2 \lambda} ( d(x,x^0)^2 + d(y,y^0)^2) \\
	{\rm s.t.} && (x,y) \in K({\cal L}) \times K(\check{\cal M}),
\end{eqnarray*}
where $\lambda > 0$, $x^0 \in K({\cal L})$, and $y^0 \in K(\check{\cal M})$.
Recall Lemma~\ref{lem:product} for $K({\cal L} \times \check{\cal M}) \simeq 
K({\cal L}) \times K(\check{\cal M})$.
As in the case of P1, we reduce P2 to a convex optimization 
over $[0,1]^m \times [0,1]^n$
by taking a special frame $\langle e_1,e_2,\ldots,e_m,f_1,f_2,\ldots,f_n\rangle$ of ${\cal L} \times \check{\cal M}$.

For $X \in {\cal L}$, let $X^{\bot}$ denote the subspace in $\check{\cal M}$ defined by
\begin{equation}
X^{\bot} := \{ y \in V \mid A(x,y) = 0 \  (x \in X) \}.
\end{equation}
Namely $X^{\bot}$ is the orthogonal subspace of $X$ with respect to the bilinear form $A$.
For $Y \in \check{\cal M}$, let $Y^{\bot} \in {\cal L}$ be defined similarly.

Let $r := \rank A$.
An {\em $A$-orthogonal frame} ${\cal F} = \langle e_1,e_2,\ldots,e_m,f_1,f_2,\ldots,f_n\rangle$
is a frame of ${\cal L} \times \check{\cal M}$ satisfying the following conditions:
\begin{itemize}
\item $\langle e_1,e_2,\ldots,e_m \rangle$ is a frame of ${\cal L}$.
\item $\langle f_1,f_2,\ldots,f_n\rangle$ is a frame of $\check{\cal M}$.
\item $e_{r+1} \vee e_{r+2} \vee \cdots \vee e_m = V^{\bot}$.
\item $f_1 \vee f_2 \vee \cdots \vee f_r = U^{\bot}$ ($\Leftrightarrow$ $f_1 \cap f_2 \cap \cdots \cap f_r = U^{\bot}$ ).
\item $f_i = {e_i}^{\bot}$ for $i=1,2,\ldots,r$.
\end{itemize}
For an $A$-orthogonal frame ${\cal F} = \langle e_1,e_2,\ldots,e_m,f_1,f_2,\ldots,f_n\rangle$, 
the Lov\'asz extension $\overline{R}$ of $R$
takes a much simpler form on $K({\cal F})$ as follows, where 
the proof  is given in Section~\ref{subsec:proof}.
\begin{Thm}\label{thm:Lovasz}
	Let  ${\cal F} = \langle e_1,e_2,\ldots,e_m,f_1,f_2,\ldots,f_n\rangle$ be an $A$-orthogonal frame.
	The restriction of the Lov\'asz extension $\overline{R}$ to 
	$K({\cal F}) \simeq [0,1]^m \times [0,1]^n$  is written as
	\begin{equation}
	\overline{R} (x,y) = \sum_{i=1}^r \max\{0, x_i - y_i\} \quad (x \in [0,1]^m, y \in [0,1]^n),
	\end{equation}
	where $(x_1,x_2,\ldots,x_m)$ is the $\langle e_1,e_2,\ldots,e_m \rangle$-coordinate of $x$
	and $(y_1,y_2,\ldots,y_n)$ is the $\langle f_1,f_2,\ldots,f_n \rangle$-coordinate of $y$.
\end{Thm}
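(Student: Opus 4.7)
The plan is to reduce the computation to an explicit evaluation on the Boolean cube $K({\cal F}) \simeq [0,1]^{m+n}$ by choosing bases of $U$ and of $V$ adapted to the $A$-orthogonal frame ${\cal F}$. In these bases the matrix of $A$ will take the canonical block form $\mathrm{diag}(I_r,0)$, which makes the values of $R$ on frame elements combinatorially transparent, and the closed form for $\overline{R}$ then follows from the usual level-set formula for the Lov\'asz extension.

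Concretely, for each atom $e_i$ I would pick a nonzero vector $v_i \in e_i$; the $v_i$'s form a basis of $U$ since $e_1 \vee \cdots \vee e_m = U$. For $V$, write each hyperplane $f_j$ as $\ker \phi_j$ for some $\phi_j \in V^*$; because $f_1 \cap \cdots \cap f_n = \{0\}$ (the frame condition in $\check{\cal M}$), the $\phi_j$'s form a basis of $V^*$, and let $w_1,\ldots,w_n$ be the dual basis in $V$. The condition $f_i = e_i^{\bot}$ for $i \leq r$ says that the functional $y \mapsto A(v_i,y)$ and $\phi_i$ have the same codimension-one kernel, hence are proportional and nonzero; after rescaling each $v_i$ we may assume $A(v_i, w_j) = \phi_i(w_j) = \delta_{ij}$ for $i \leq r$ and all $j$. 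The condition $e_{r+1} \vee \cdots \vee e_m = V^{\bot}$ forces $A(v_i,\cdot) \equiv 0$ for $i > r$, so the matrix of $A$ in the bases $\{v_i\},\{w_j\}$ is exactly $\mathrm{diag}(I_r,0)$.

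Now for $(I,J) \in 2^{[m]} \times 2^{[n]}$, the corresponding frame element of ${\cal F}$ is $(X_I, Y_J)$ with $X_I = \sum_{i \in I} \langle v_i \rangle$ and, passing through the opposite lattice, $Y_J = \bigcap_{j \in J} f_j = \sum_{k \notin J} \langle w_k \rangle$. The matrix form of $A$ immediately gives
\[
R(X_I, Y_J) \;=\; \rank A[I,\,[n]\setminus J] \;=\; \sum_{i=1}^r \mathbf{1}[i \in I]\,\mathbf{1}[i \notin J].
\]
By Lemmas~\ref{lem:product} and~\ref{lem:Boolean}, $K({\cal F})$ is canonically isometric to $[0,1]^m \times [0,1]^n$ with the coordinates as in the statement, and by (the proof of) Theorem~\ref{thm:Lovasz_ext} the restriction of $\overline{R}$ to $K({\cal F})$ coincides with the Boolean Lov\'asz extension of $R|_{\cal F}$. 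Substituting the formula above into the standard level-set representation
\[
\overline{R}(x,y) \;=\; \int_0^1 R\bigl(\{i : x_i \geq t\},\,\{j : y_j \geq t\}\bigr)\,dt
\]
and interchanging sum with integral yields $\sum_{i=1}^r \int_0^1 \mathbf{1}[y_i < t \leq x_i]\,dt = \sum_{i=1}^r \max\{0, x_i - y_i\}$, which is the claimed identity.

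The only nontrivial step is the normalization in the second paragraph: one must verify that the five $A$-orthogonality conditions collectively force the matrix of $A$ to be $\mathrm{diag}(I_r,0)$ in the chosen bases. Once this normal form is in hand, the value of $R$ on a frame element reduces to counting indices $i \leq r$ with $i \in I$ and $i \notin J$, and the rest of the argument is a direct application of the standard Lov\'asz formula on $[0,1]^{m+n}$.
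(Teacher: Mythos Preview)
Your proof is correct and follows essentially the same route as the paper: compute $R$ on the Boolean frame to get $R(I,J) = |(I \setminus J) \cap \{1,\ldots,r\}|$, then take the Lov\'asz extension on $[0,1]^{m+n}$. The paper carries out the first step via Lemma~\ref{lem:formula_R} (the $(\cdot)^\bot$-formula for $R$) together with $e_i^\bot = f_i$ for $i\le r$ and $e_i^\bot = V$ for $i>r$, rather than by choosing bases to put $A$ in the normal form $\mathrm{diag}(I_r,0)$, and for the second step simply notes that $\sum_{i\le r}\max\{0,x_i-y_i\}$ is already its own Lov\'asz extension; but the argument is otherwise identical.
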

The main ingredient in solving P2 is the following, where 
the proof is given is Section~\ref{subsec:proof}.
Figure~\ref{fig:A-ortho} illustrates an $A$-orthogonal frame
in this theorem.
 \begin{Thm}\label{thm:R(x,y)}
	 Let ${\cal X}$ and ${\cal Y}$ 
	 be maximal chains corresponding to 
	 maximal simplices containing $x^0$ and $y^0$, respectively.
	 \begin{itemize}
	 	\item[{\rm (1)}] There exists an $A$-orthogonal frame ${\cal F} = \langle e_1,e_2,\ldots,e_m,f_1,f_2,\ldots,f_n\rangle$ satisfying
	 	\begin{equation}\label{eqn:XuYbot}
	 	{\cal X} \cup {\cal Y}^{\bot} \subseteq 
	 	\langle e_1,e_2,\ldots,e_m \rangle,\   {\cal X}^{\bot} \cup {\cal Y} \subseteq 
	 	\langle f_1,f_2,\ldots,f_n \rangle,  
	 	\end{equation}
	 	in which such a frame can be found in polynomial time.
	 	\item[{\rm (2)}] For an $A$-orthogonal frame ${\cal F}$ satisfying $(\ref{eqn:XuYbot})$, the minimizer $(x^*,y^*)$ of P2
	 	exists in $K({\cal F})$.
	 \end{itemize}
\end{Thm}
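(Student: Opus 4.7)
The plan is to prove part~(1) constructively in two stages and derive part~(2) via a rearrangement-inequality argument.

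For \textbf{part~(1)}, I first apply the algorithmic proof of Lemma~\ref{lem:frame} to the two chains ${\cal X}$ and ${\cal Y}^\bot$ in ${\cal L}$ (note $V^\bot\in{\cal Y}^\bot$ since $V$ is the bottom of $\check{\cal M}$ and ${\cal Y}$ is maximal). This yields, in polynomial time, atoms $e_1,\ldots,e_m$ of ${\cal L}$ whose frame contains ${\cal X}\cup{\cal Y}^\bot$. Reindex so that $V^\bot=e_{r+1}\vee\cdots\vee e_m$. Set $f_i:=e_i^\bot$ for $i\le r$. Since $\mathrm{span}(e_1,\ldots,e_r)$ is a complement of $V^\bot$ in $U$ on which $A$ is nondegenerate, the hyperplanes $f_1,\ldots,f_r$ satisfy $\bigcap_{i=1}^r f_i=U^\bot$. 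The chain ${\cal X}^\bot$ is then automatic: for $X=\bigvee_{i\in I_X}e_i$ one has $X^\bot=\bigcap_{i\in I_X\cap[r]}f_i\in\langle f_1,\ldots,f_r\rangle$.

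To complete the frame so that ${\cal Y}$ is also captured, I work in the interval $I:=[U^\bot,{\bf 1}]\subseteq\check{\cal M}$ (isomorphic to the subspace lattice of $U^\bot\subseteq V$) and apply Lemma~\ref{lem:frame} inside $I$ to produce atoms $f_{r+1},\ldots,f_n$; combined with $f_1,\ldots,f_r$ these form a full frame $\langle f_1,\ldots,f_n\rangle$ of $\check{\cal M}$. The subtle point, which I expect to be the main obstacle of part~(1), is that each $Y\in{\cal Y}$ must be recovered as $\bigcap_{i\in J_Y}f_i$ \emph{exactly} rather than only modulo $U^\bot$, since $Y$ is not determined by $Y+U^\bot$ and $Y\cap U^\bot$ alone. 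My plan is to run the construction inside $I$ on the refinement chain induced jointly by $\{Y\cap U^\bot:Y\in{\cal Y}\}$ and the image of ${\cal X}^\bot$ in $I$, forcing each $Y\in{\cal Y}$ into the frame while respecting the already-fixed atoms $f_1,\ldots,f_r$. All steps reduce to rank computations and Gaussian elimination, hence run in polynomial time.

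For \textbf{part~(2)}, strong convexity of the P2 objective yields a unique minimizer $(x^*,y^*)$. Applying part~(1) to the chains obtained by augmenting ${\cal X}$ and ${\cal Y}$ with the support chains of $x^*$ and $y^*$ produces an $A$-orthogonal frame $\tilde{\cal F}$ refining ${\cal F}$ so that both $(x^0,y^0)$ and $(x^*,y^*)$ lie in $K(\tilde{\cal F})\simeq[0,1]^m\times[0,1]^n$. By Theorem~\ref{thm:Lovasz}, the P2 objective in $\tilde{\cal F}$-coordinates reads
\[
\sum_{i=1}^{r}\max\{0,x_i-y_i\}\;+\;\frac{1}{2\lambda}\Bigl(\sum_{i}(x_i-x^0_i)^2+\sum_{j}(y_j-y^0_j)^2\Bigr),
\]
and the matching $f_i=e_i^\bot$ arranged in part~(1) aligns the chain orderings of ${\cal X}$ and ${\cal Y}$ on $\{1,\ldots,r\}$. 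If two paired coordinates $(x^*_i,y^*_i)$ and $(x^*_j,y^*_j)$ are inverted relative to the sorted order inherited from $(x^0,y^0)$, simultaneously swapping them keeps the $\max$-sum unchanged (matched pairs are merely permuted) while strictly lowering the quadratic penalty via the rearrangement inequality, contradicting optimality. The analogous single-coordinate swap for indices $i,j>r$ (which do not appear in the $\max$-sum) handles the remaining coordinates. Hence $(x^*,y^*)$ has sorted coordinates and, by the reconstruction formula~(\ref{eqn:recover}), lies in $K({\cal F})$. The main obstacle throughout is part~(1)'s exact capture of ${\cal Y}$; once that is resolved, part~(2) reduces to the sorting argument already used to solve P1, with the twist that $x$- and $y$-coordinates must be swapped simultaneously to preserve the coupling in $\overline{R}$.
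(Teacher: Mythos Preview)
Your plan for part~(1) begins correctly and matches the paper's opening move (apply Lemma~\ref{lem:frame} to ${\cal X}$ and ${\cal Y}^\bot$, relabel so $V^\bot=e_{r+1}\vee\cdots\vee e_m$, set $f_i=e_i^\bot$ for $i\le r$), but your completion step is not quite right: atoms of the interval $[U^\bot,{\bf 1}]$ in $\check{\cal M}$ are rank-$(r+1)$ elements of $\check{\cal M}$, not atoms of $\check{\cal M}$, so the resulting $\langle f_1,\ldots,f_n\rangle$ is not a frame in the required sense; and knowing $Y\cap U^\bot$ together with $Y^{\bot\bot}$ still does not determine $Y$. The paper instead walks along ${\cal Y}$ one step at a time: whenever $Y_{i-1}^{\bot\bot}\prec_1 Y_i^{\bot\bot}$ it shows the needed atom is already among $f_1,\ldots,f_r$, and whenever $Y_{i-1}^{\bot\bot}=Y_i^{\bot\bot}$ it freely picks an atom $f$ of $\check{\cal M}$ with $Y_i=Y_{i-1}\vee f$. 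This is an easy fix to your outline.

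The real gap is in part~(2). Your argument hinges on producing an $A$-orthogonal frame $\tilde{\cal F}$ that simultaneously contains ${\cal X}$, ${\cal Y}^\bot$, and the support chain of $x^*$ in its $e$-part (and symmetrically ${\cal X}^\bot$, ${\cal Y}$, and the support of $y^*$ in its $f$-part). But Lemma~\ref{lem:frame} only places \emph{two} chains into a common frame; three chains in a modular lattice need not lie in any Boolean sublattice, so the frame $\tilde{\cal F}$ you invoke may simply not exist. The phrase ``augmenting ${\cal X}$ with the support chain of $x^*$'' hides exactly this: the union of two maximal chains is not a chain, and part~(1) does not accept such input. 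In effect you are assuming what the theorem asserts, namely that $(x^*,y^*)$ lies in a frame compatible with $(x^0,y^0)$. Even granting $\tilde{\cal F}$, the claimed ``alignment of chain orderings of ${\cal X}$ and ${\cal Y}$ on $\{1,\ldots,r\}$'' does not follow from the construction: the order in which ${\cal X}$ picks up the atoms $e_1,\ldots,e_r$ bears no a priori relation to the order in which ${\cal Y}$ picks up $f_1,\ldots,f_r$, so a simultaneous $(x,y)$-swap need not decrease the quadratic term.

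The paper's proof of part~(2) takes a completely different route. It follows the geodesic $[(x^0,y^0),(x^*,y^*)]$ through a sequence of adjacent maximal simplices (a \emph{gallery}) and shows, using the optimality of $(x^*,y^*)$ and the explicit form of $\overline{R}$ in Theorem~\ref{thm:Lovasz}, that every wall-crossing is forced to be ``$A$-orthogonal'': the new element introduced at each step is $g(p,p',{\cal Y}^\bot)$ or $g(q,q',{\cal X}^\bot)$ in the sense of Lemma~\ref{lem:2-interval}. An induction (Lemma~\ref{lem:A-ortho_gallery}) then confines the entire gallery to $\langle{\cal X}\cup{\cal Y}^\bot\rangle\times\langle{\cal X}^\bot\cup{\cal Y}\rangle$, which lies in $K({\cal F})$. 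A perturbation argument removes the genericity assumption on the geodesic. The rearrangement/swap idea you propose does appear, but only \emph{locally} at a single wall-crossing inside an already-controlled frame, not globally across a hypothetical common frame.
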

	\begin{figure}[t]
		\begin{center}
			\includegraphics[scale=0.6]{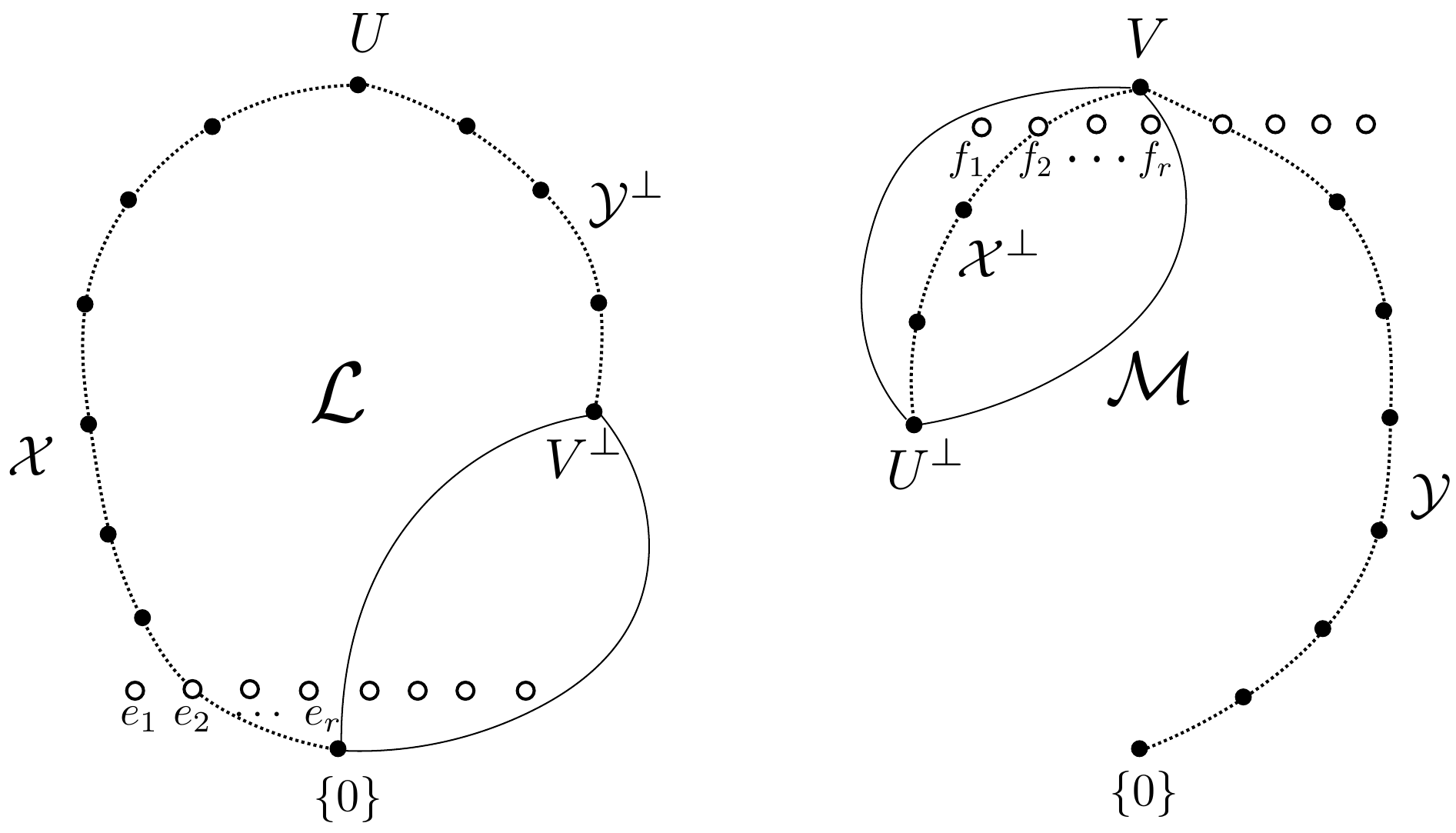}
			\caption{$A$-orthogonal frame in Theorem~\ref{thm:R(x,y)}}
			\label{fig:A-ortho}
		\end{center}
	\end{figure}\noindent

Assume Theorems~\ref{thm:Lovasz} and \ref{thm:R(x,y)}. 
For an $A$-orthogonal frame satisfying~(\ref{eqn:XuYbot}), the problem P2 is equivalent to
\begin{eqnarray*}	
	{\rm P2': \quad  Min.} && \sum_{i=1}^r \max\{0,x_i - y_i \} + \frac{1}{2 \lambda} \left\{\sum_{i=1}^m (x_i -x^0_i)^2 + \sum_{i=1}^n (y_i - y^0_i)^2 \right\} \\
	{\rm s.t.} && 0 \leq x_i \leq 1 \quad (0 \leq i \leq m), \\
	&&  0 \leq y_i \leq 1 \quad (0 \leq i \leq n).
\end{eqnarray*}
Again this problem is easily solved coordinate-wise.
Obviously $x^*_i = x^0_i$ and $y^*_i = y^0_i$ for $i > r$.
For $i \leq r$,  
$(x^*_i,y^*_i)$ 
is the minimizer of the following $2$-dimensional problem:
\begin{eqnarray*}
	{\rm Min.} && 
\max\{0,x_i - y_i \} + \frac{1}{2 \lambda} \left\{(x_i -x^0_i)^2 + (y_i - y^0_i)^2 \right\}\\
{\rm s.t.} && 0 \leq x_i \leq 1,\ 0 \leq y_i \leq 1. 
\end{eqnarray*}
Obviously this can be solved in constant time.

Thus we can solve P2 as follows. Choose 
an $A$-orthogonal frame ${\cal F}$ satisfying (\ref{eqn:XuYbot}), 
solve P2$'$ to obtain the minimizer $(x^*,y^*) \in [0,1]^m \times [0,1]^n$,  and recover $(x^*,y^*)$ in $K({\cal L}) \times K({\cal M})$.  
\begin{Thm}\label{thm:P2}
	The resolvent of $\overline{R}$ is computed in polynomial time.
\end{Thm}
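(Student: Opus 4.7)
The plan is to carry out the procedure sketched immediately before the theorem statement and to verify that every step runs in polynomial time. The input is the bilinear form $A$, together with the current iterates $x^0 \in K({\cal L})$ and $y^0 \in K(\check{\cal M})$, each presented as a formal convex combination over a chain of subspaces, and the parameter $\lambda > 0$.

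First, I would read off the supporting chains ${\cal X} \subseteq {\cal L}$ and ${\cal Y} \subseteq \check{\cal M}$ of $x^0$ and $y^0$ and extend each to a maximal chain by Gaussian elimination. Invoking Theorem~\ref{thm:R(x,y)}(1), I obtain, in polynomial time, an $A$-orthogonal frame ${\cal F} = \langle e_1,\ldots,e_m,f_1,\ldots,f_n \rangle$ satisfying the containment~(\ref{eqn:XuYbot}). I then rewrite $x^0,y^0$ in their ${\cal F}$-coordinates in $[0,1]^m \times [0,1]^n$.

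By Theorem~\ref{thm:R(x,y)}(2), the unique minimizer $(x^*,y^*)$ of P2 lies inside the subcomplex $K({\cal F})$, so it suffices to minimize over $K({\cal F})$. By Theorem~\ref{thm:Lovasz}, the restriction of $\overline R$ to $K({\cal F})$ takes the simple separable form $\sum_{i=1}^{r} \max\{0,x_i - y_i\}$ in these coordinates; and because $K({\cal F}) \simeq [0,1]^{m+n}$ is an isometric subcomplex (Lemma~\ref{lem:Boolean} applied to each factor via Lemma~\ref{lem:product}), the quadratic penalties $d(x,x^0)^2$ and $d(y,y^0)^2$ become $\|x-x^0\|_2^2$ and $\|y-y^0\|_2^2$. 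Hence P2 reduces to the Euclidean convex program P2$'$.

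P2$'$ decouples into coordinates. For $i > r$ one has $x_i^* = x_i^0$ and $y_i^* = y_i^0$. For $i \le r$ each pair $(x_i^*,y_i^*)$ is the minimizer on $[0,1]^2$ of $\max\{0,x_i-y_i\} + \frac{1}{2\lambda}\bigl((x_i-x_i^0)^2+(y_i-y_i^0)^2\bigr)$, which is obtained in closed form by considering the two half-planes $x_i \ge y_i$ and $x_i \le y_i$ separately (each gives a constrained quadratic, and the two candidates are compared). This takes constant time per index, so P2$'$ is solved in polynomial time. Finally I recover $(x^*,y^*)$ in $K({\cal L})\times K(\check{\cal M})$ from its ${\cal F}$-coordinates by sorting and applying the formula (\ref{eqn:recover}) separately to the $x$-part and $y$-part.

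The substantive work is concentrated in producing the $A$-orthogonal frame satisfying (\ref{eqn:XuYbot}) and in certifying that the minimizer actually lies in $K({\cal F})$; once those two statements (both supplied by Theorem~\ref{thm:R(x,y)}) and the simplification of $\overline R$ on $K({\cal F})$ (Theorem~\ref{thm:Lovasz}) are in hand, the present theorem is an assembly step combined with a routine analysis of a separable two-variable Euclidean subproblem. I expect the only place where care is required in writing the proof is the bookkeeping that converts between the orthoscheme coordinates of $x^0,y^0$ and their ${\cal F}$-coordinates, and back again at the end via (\ref{eqn:recover}).
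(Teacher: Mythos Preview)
Your proposal is correct and follows essentially the same approach as the paper: the paper also reduces P2 to the separable Euclidean program P2$'$ by invoking Theorem~\ref{thm:R(x,y)} (existence of the $A$-orthogonal frame containing the minimizer) together with Theorem~\ref{thm:Lovasz} (the simple form of $\overline R$ on that frame), solves P2$'$ coordinate-wise, and recovers the answer via (\ref{eqn:recover}). Your write-up in fact supplies slightly more detail than the paper on the two-variable subproblem and on the coordinate bookkeeping.
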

Combining Theorems~\ref{thm:prox}, \ref{thm:P1}, and \ref{thm:P2}, 
the proof of Theorem~\ref{thm:main'} is completed.

\begin{Rem}\label{rem:bit}
	In the above SPPA, 
	the required bit-length for coefficients of $z \in K({\cal L} \times \check{\cal M})$ 
	is bounded polynomially in $n,m,W$. Indeed,
	the transformation between the original coordinate and an ${\cal F}$-coordinate
	corresponds to multiplying a triangular matrix consisting of $\{0,1,-1\}$ elements; see (\ref{eqn:recover}). 
	In each iteration $k$,
	the optimal solution of quadratic problem P1$'$ or P2$'$ 
	is obtained by 
	adding (fixed) rational functions in $n,m,W,k$ 
	to (current points) $x_i^0, y_i^0$ 
	and multiplying a (fixed) $2 \times 2$ rational matrix in $n,m,W,k$.   
	Consequently the bit increase is bounded as required.

	On the other hand, the bit-length estimation for a basis 
	of a vector subspace appearing in the algorithm is not clear. 
\end{Rem}

\begin{Rem}\label{rem:ncrank}
Our algorithm is easily adapted to compute nc-rank in the same time complexity; 
see Introduction for nc-rank.
Indeed, by (\ref{eqn:weakdual2}), it suffices to solve 
\begin{eqnarray*}
{\rm Min.} && - \dim X - \dim Y + M \sum_{i=1}^{N} R^{A_i}(X,Y) \\
{\rm s.t.} && X \in {\cal L},\ Y \in \check{\cal M},
\end{eqnarray*}
where ${\cal L}$ and ${\cal M}$ are the lattices of all vector subspaces 
of ${\bf F}^m$ and ${\bf F}^n$, respectively, and $M := n+m+1$. 
As above, we may consider the following perturbed continuous relaxation:
\begin{eqnarray*}
{\rm Min.} && - d_1(x) - d_1(y) + M \sum_{i=1}^{N} \overline{R^{A_i}}(x,y) + 
\epsilon (d^2(x) + d^2(y)) \\
{\rm s.t.} && (x,y) \in K({\cal L} \times \check{\cal M}),
\end{eqnarray*}
where $\epsilon := 1/4(n+m)$.
In the setting of 
$f_i(x,y) := M \overline{R^{A_i}}(x,y)$ for $(1 \leq i \leq N)$, 
$- d_1(x) + \epsilon d^2(x)$ for $i=N+1$, and $- d_1(y) + \epsilon d^2(y)$ for $i=N+2$, 
the SPPA and the above analysis are applicable.  
\end{Rem}

\subsection{Proof}\label{subsec:proof}
We start with basic properties of $(\cdot)^{\bot}$, which follow from elementary linear algebra.
\begin{Lem}\label{lem:bot}
	\begin{itemize}
		\item[{\rm (1)}] If $X \subseteq X'$, then $X^{\bot} \supseteq {X'}^{\bot}$ and
		$
		r(X^\bot) - r({X'}^{\bot}) \leq r(X') - r(X).
		$
		\item[{\rm (2)}] $(X + X')^{\bot} = X^{\bot} \cap {X'}^{\bot}$. 
		\item[{\rm (3)}] $X^{\bot \bot} \supseteq X$.
		\item[{\rm (4)}] $X^{\bot \bot \bot} = X^{\bot}$.
	\end{itemize}
\end{Lem}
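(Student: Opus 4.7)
The plan is to treat the four items as standard annihilator facts for the bilinear form $A\colon U \times V \to \mathbf{F}$, with a single common engine -- passing to quotients and duals -- handling (1), (3), (4), while (2) falls out of unwinding definitions. I would dispatch the easy items first, then do (1), then bootstrap (4) from (1) and (3).

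For (2), I would just unwind: $y \in (X+X')^{\bot}$ means $A(x+x',y)=0$ for all $x \in X$, $x' \in X'$; setting $x'=0$ and then $x=0$ splits this into $y \in X^{\bot}$ and $y \in X'^{\bot}$, giving $\subseteq$, while the reverse inclusion is immediate from bilinearity. For (3), every $x \in X$ pairs to zero with every $y \in X^{\bot}$ by the very definition of $X^{\bot}$, so $x$ lies in $(X^{\bot})^{\bot} = X^{\bot\bot}$. The antitonicity half of (1) is the same one-liner: a vector orthogonal to everything in the larger set $X'$ is certainly orthogonal to everything in $X$.

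The only item with real content is the rank inequality in (1). The plan is to define
\[
\Phi \colon X^{\bot} \longrightarrow (X'/X)^{*}, \qquad y \ \mapsto\ \bigl(x' + X \mapsto A(x', y)\bigr),
\]
which is well-defined precisely because $y \in X^{\bot}$ annihilates $X$. Its kernel is exactly $X'^{\bot}$ (since $\Phi(y)=0$ means $A(x',y)=0$ for all $x' \in X'$). Hence $\Phi$ induces an injection $X^{\bot}/X'^{\bot} \hookrightarrow (X'/X)^{*}$, and taking dimensions yields
\[
r(X^{\bot}) - r(X'^{\bot}) \ \leq\ \dim(X'/X) \ =\ r(X')-r(X).
\]

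For (4), I would bootstrap from (1) and (3). Applying (3) with $X^{\bot}$ in place of $X$ gives $X^{\bot} \subseteq X^{\bot\bot\bot}$. Conversely, apply the antitonicity in (1) to the inclusion $X \subseteq X^{\bot\bot}$ from (3) to obtain $X^{\bot\bot\bot} \subseteq X^{\bot}$, and combine for equality. There is no real obstacle here -- the rank estimate in (1) is the only substantive step, and the quotient-plus-dual construction above makes it routine; the degeneracy of $A$ (i.e.\ the possible failure of $X^{\bot\bot}=X$) never enters the argument, which is why all four statements survive without any nondegeneracy assumption on $A$.
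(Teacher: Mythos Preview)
Your proof is correct. The paper does not actually give a proof of this lemma --- it simply states that the four properties ``follow from elementary linear algebra'' --- so your argument supplies exactly the kind of details the paper omits, and in particular your quotient-and-dual map $\Phi\colon X^{\bot}\to (X'/X)^*$ is a clean way to get the rank inequality in~(1).
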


Next we give an alternative expression of $R$ by using $(\cdot)^{\bot}$.
Let $\check r$ be the rank function of $\check{\cal M}$. 
Namely $\check r(Y) = m - \dim Y$. 
\begin{Lem}\label{lem:formula_R}
	$R(X,Y) = r(X) - r(X \wedge Y^{\bot}) = \check r(Y \vee X^{\bot}) - \check r(Y) (= r(Y) - r(Y \cap X^{\bot}))$.
\end{Lem}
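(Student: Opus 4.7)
The plan is to interpret $R(X,Y)$ via rank–nullity applied to the natural linear map induced by the bilinear form $A$. Concretely, I would introduce the linear map
\[
\phi_X \colon X \to Y^*, \qquad x \mapsto \bigl(y \mapsto A(x,y)\bigr),
\]
where $Y^*$ is the dual of $Y$. Since the matrix representing $A|_{X\times Y}$ (with respect to any bases of $X$ and $Y$) is exactly the matrix of $\phi_X$ (with respect to the chosen basis of $X$ and the dual basis of $Y^*$), its rank equals $\rank\phi_X$; thus $R(X,Y) = \rank\phi_X$.

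Next I would identify the kernel of $\phi_X$. By definition, $x \in \ker\phi_X$ iff $A(x,y) = 0$ for every $y \in Y$, i.e.\ iff $x \in Y^{\bot}$. Hence $\ker\phi_X = X \cap Y^{\bot} = X \wedge Y^{\bot}$. The rank–nullity theorem then yields
\[
R(X,Y) \;=\; \rank \phi_X \;=\; \dim X - \dim(X\cap Y^\bot) \;=\; r(X) - r(X\wedge Y^{\bot}),
\]
which is the first equality. Swapping the roles of $X$ and $Y$ (i.e., using $\phi_Y\colon Y\to X^*$ with $y\mapsto (x\mapsto A(x,y))$, whose rank is again $R(X,Y)$ because it is the transpose of the same matrix), the same argument gives $R(X,Y) = r(Y) - r(Y\cap X^{\bot})$, which is the parenthetical last equality.

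It remains to match $r(Y) - r(Y\cap X^\bot)$ with $\check r(Y\vee X^{\bot}) - \check r(Y)$. This is a pure book-keeping step: in $\check{\cal M}$ the join $\vee$ is the intersection in ${\cal M}$, and the rank function is $\check r(Z) = n - \dim Z$ (with $n = \dim V$). Substituting, $\check r(Y\vee X^{\bot}) - \check r(Y) = \bigl(n - \dim(Y\cap X^{\bot})\bigr) - (n - \dim Y) = \dim Y - \dim(Y\cap X^{\bot})$, which agrees with $r(Y) - r(Y\cap X^{\bot})$.

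I do not anticipate a real obstacle here: everything reduces to the rank–nullity theorem for the linear map associated with the bilinear form and to unwinding the definition of the rank function on the opposite lattice. The only point that deserves a careful sentence is why $\rank\phi_X$ coincides with $R(X,Y)$ as defined through the matrix representation, which is immediate from the fact that the matrix of $A|_{X\times Y}$ in dual bases is literally the matrix of $\phi_X$.
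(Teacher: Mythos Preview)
Your proof is correct and is essentially the same argument as the paper's: both identify the kernel of the map $X\to Y^*$ induced by $A$ as $X\cap Y^{\bot}$ and then count dimensions, with the paper carrying this out via an explicit basis of $X$ extending one of $X\cap Y^{\bot}$ while you invoke rank--nullity directly. Your translation between $r(Y)-r(Y\cap X^{\bot})$ and $\check r(Y\vee X^{\bot})-\check r(Y)$ is also correct, since in $\check{\cal M}$ the join is intersection and $\check r(Z)=n-\dim Z$.
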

\begin{proof}
	Consider bases $\{a_1,a_2,\ldots,a_{\ell}\}$ of $X$ and $\{ b_1,b_2,\ldots,b_{\ell'}\}$ of $Y$.
	We can assume that
	$\{a_{k+1},a_{k+2},\ldots,a_{\ell}\}$ is a base of $X \cap Y^{\bot}$.
	Consider the matrix representation $(A(a_i,b_j))$ of $A|_{X \times Y}$ 
	with respect to the bases.
	Since $A(a_{i},Y) = \{0\}$ for $i > k$, 
	the submatrix of $k+1,k+2,\ldots,\ell$-th rows 
	is a zero matrix.
	On the other hand, the submatrix of $1,2,\ldots k$-th rows
	must have the row-full rank $k$. 
	Thus the rank $R(X,Y)$ of $(A(a_i,b_j))$ is $k = \ell - (\ell-k) = r(X) - r(X \wedge Y^{\bot})$.
	The same consideration shows the second equality.
\end{proof}

\paragraph{Proof of Theorem~\ref{thm:Lovasz}.}
An $A$-orthogonal frame 
$\langle e_1,e_2,\ldots,e_m,f_1,f_2,\ldots,f_n \rangle 
= \langle e_1,e_2,\ldots,e_m \rangle \times \langle f_1,f_2,\ldots,f_n \rangle$ 
is naturally identified with Boolean lattice 
$2^{\{1,2,\ldots,m\}} \times 2^{\{1,2,\ldots,n\}}$ 
(by $(X,Y) \mapsto \bigvee_{i \in X} e_i \vee \bigvee_{j \in Y} f_j$). 
Then $r = |\cdot|$, $\check{r} = |\cdot|$, and  $\vee = \cup$.
Notice that ${e_i}^{\bot} = f_i$ if $i \leq r$ 
and ${e_i}^{\bot} = V$ if $i > r$.
The latter fact follows from 
$e_i \subseteq V^{\bot} \Rightarrow {e_i}^{\bot} \supseteq V^{\bot \bot} \supseteq V$.
This implies that
$X^{\bot} = X \cap \{1,2,\ldots,r\}$ for $X \in 2^{\{1,2,\ldots,m\}}$.
By Lemma~\ref{lem:formula_R}, we have
\[
	R(X,Y)  =  |Y \cup (X \cap \{1,2,\ldots,r\}) | - |Y| =  |(X \setminus Y) \cap \{1,2,\ldots,r\}|.
\]
Identify $2^{\{1,2,\ldots,m\}} \times 2^{\{1,2,\ldots,n\}}$ with $\{0,1\}^m \times \{0,1\}^n$ 
by $(X,Y) \mapsto (1_{X}, 1_{Y})$.
Then $R$ is also written as
\begin{equation*}
R(x,y) = \sum_{i=1}^r \max \{0, x_i - y_i\} \quad ((x,y) \in \{0,1\}^m \times \{0,1\}^n).
\end{equation*}
The Lov\'asz extension $\overline R$ is equal to
the function obtained from $R$ by
extending the domain to $[0,1]^m \times [0,1]^n$.

\paragraph{Proof of Theorem~\ref{thm:R(x,y)}~(1).}
By Lemma~\ref{lem:frame}, 
we can find (in polynomial time) a frame $\langle e_1,e_2,\ldots,e_m \rangle$ 
containing two chains ${\cal X}$ and ${\cal Y}^{\bot}$.
Suppose that ${\cal X} = \{X_i\}_{i=0}^m$ and ${\cal Y} = \{Y_i\}_{i=0}^n$.
We can assume that $e_{r+1} \vee e_{r+2} \vee \cdots \vee e_{m} = {Y_0}^{\bot} = V^{\bot}$ by relabeling.
Let $f_i := {e_i}^{\bot}$ for $i=1,2,\ldots,r$.
Then $f_1 \vee f_2 \vee \cdots \vee f_r = U^{\bot}$ holds.
Indeed, by Lemma~\ref{lem:bot},  we have $U^{\bot} =  (e_1 \vee e_2 \vee \cdots \vee e_m)^{\bot} = {e_1}^{\bot} \vee {e_2}^{\bot} \vee \cdots \vee {e_m}^{\bot} = f_1 \vee f_2 \vee \cdots f_r \vee V \vee \cdots \vee V = f_1 \vee f_2 \vee \cdots \vee f_r$. 
%

Consider the chain ${\cal Y}^{\bot \bot}$ in $\check{\cal M}$.
Then ${\cal Y}^{\bot \bot} \subseteq \langle f_1,f_2,\ldots,f_r \rangle$.
Indeed, each $Y_i^{\bot}$ is a join of a subset of $e_1,e_2,\ldots,e_m$.
Taking $(\cdot)^\bot$ as above, 
$Y_i^{\bot \bot}$ is represented as a join of a subset of $f_1,f_2,\ldots,f_r$.
Consider a consecutive pair $Y_{i-1}, Y_{i}$ in ${\cal Y}$.
Consider ${Y_{i-1}}^{\bot \bot}$ and ${Y_{i}}^{\bot \bot}$.
Then, by Lemma~\ref{lem:bot}~(3), 
${Y_{i-1}}^{\bot \bot} \preceq Y_{i-1}$ and ${Y_{i}}^{\bot \bot} \preceq Y_{i}$.
Suppose that ${Y_{i-1}}^{\bot \bot} \neq {Y_{i}}^{\bot \bot}$.
Then ${Y_{i-1}}^{\bot \bot} \prec_1 {Y_{i}}^{\bot \bot}$ (by Lemma~\ref{lem:bot}~(1)).
Thus for some $f_j$ $(1 \leq j \leq r)$, 
it holds ${Y_{i}}^{\bot \bot} = f_j \vee {Y_{i-1}}^{\bot \bot}$.
Here $f_j \not \preceq Y_{i-1}$ must hold.
Otherwise ${Y_{i-1}}^{\bot \bot} \succeq {f_j}^{\bot \bot}  = {e_j}^{\bot \bot \bot} = f_j$, 
which contradicts ${Y_{i-1}}^{\bot \bot} \prec_1 {Y_{i}}^{\bot \bot} = f_j \vee {Y_{i-1}}^{\bot \bot}$.
Thus $Y_{i} = Y_{i-1} \vee f_j$.
Therefore, for each $i$ with ${Y_{i-1}}^{\bot \bot} = {Y_{i}}^{\bot \bot}$, 
we can choose an atom $f$ with $Y_i = f \vee Y_{i-1}$ to add to $f_1,f_2,\ldots,f_r$, 
and obtain a required frame 
$\langle f_1,f_2,\ldots f_n \rangle$ (containing ${\cal X}^{\bot}$ and ${\cal Y}$).

\paragraph{Proof of Theorem~\ref{thm:R(x,y)}~(2).}
The proof is long. An outline of the proof with an intuition is explained as follows:
\begin{itemize}
	\item Imagine the geodesic $\gamma$ emanating from $(x^0,y^0)$ to the minimizer $(x^*,y^*)$ of P2.
	\item In the {\em generic} case, 
	      the geodesic meets maximal simplices $K_0,K_1,K_2,\ldots,K_{\ell} \subseteq K({\cal L} \times \check{\cal M})$ in order
	      so that $K_{i} \cap K_{i+1}$ has dimension $n+m-1$. This yields
	      a sequence ({\em gallery}) of corresponding maximal chains ${\cal C}_0, {\cal C}_1,{\cal C}_2, \ldots, {\cal C}_{\ell}$ in ${\cal L} \times \check{\cal M}$.  
	\item This gallery must have a special property (Lemma~\ref{lem:generic}), called the {\em $A$-orthogonality}, which we will introduce.
	\item On the other hand, any $A$-orthogonal gallery ${\cal C}_0, {\cal C}_1,{\cal C}_2, \ldots, {\cal C}_{\ell}$
	belongs to the product of sublattices generated by 
	${\cal X} \cup {\cal Y}^{\bot}$ and ${\cal Y} \cup {\cal X}^{\bot}$, where
	${\cal X}$ and ${\cal Y}$ are the {\em projections} of the initial ${\cal C}_0$ to ${\cal L}$ 
	and to $\check{\cal M}$, respectively (Lemma~\ref{lem:A-ortho_gallery}).
	\item In the generic case, the above imply that $(x^*,y^*)$ belongs to 
	the product of sublattices generated by ${\cal X} \cup {\cal Y}^{\bot}$ and ${\cal Y} \cup {\cal X}^{\bot}$, where ${\cal X}$ and ${\cal Y}$ are the supports of $x^0$ and $y^0$, respectively. This implies Theorem~\ref{thm:R(x,y)}~(2).
	\item By perturbation, we remove the genericity assumption (Lemma~\ref{lem:remove}). 
\end{itemize}
To formulate the $A$-orthogonality, 
we start with a general lemma of a modular lattice.
\begin{Lem}\label{lem:2-interval}
	Let ${\cal L}$ be a modular lattice.
	Let $p,p' \in {\cal L}$ with $p \prec_2 p'$, 
	and let ${\cal C}$ be a chain such that
	\[
	{\cal C}_1 := \{ q\in {\cal C} \mid r(p' \wedge q) - r(p \wedge q) = 1 \}
	\]
	is nonempty.
	Then  there is a unique element $u^*$ with $p \prec_1 u^* \prec_1 p'$ such that
		for all  $q \in {\cal C}_1$ and all $u \neq u^*$ with  $p \prec_1 u \prec_1 p'$ it holds 
		\begin{eqnarray*}
			&& r(u^*) - r(u^* \wedge q) = r(u)- r(u \wedge q) - 1, \\
			&& r(q) - r(u^* \wedge q) = r(q)- r(u \wedge q) - 1,
		\end{eqnarray*}
		where $u^*$ is equal to $p \vee (p' \wedge q) (= p' \wedge (p \vee q))$ for 
		all $q \in {\cal C}_1$.
\end{Lem}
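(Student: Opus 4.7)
The plan is to define the candidate atom explicitly and then verify all the claimed properties by routine modular-lattice calculations, with most of the work resting on the modular rank identity \eqref{eqn:modular} and the modular law.

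First, for each $q \in {\cal C}_1$ I will set $u^*(q) := p \vee (p' \wedge q)$ and observe by the modular law (applied with $p \preceq p'$) that this equals $(p \vee q) \wedge p'$, establishing the two alternative expressions in the statement. To check $p \prec_1 u^*(q) \prec_1 p'$, I will compute
\[
r(u^*(q)) = r(p) + r(p' \wedge q) - r(p \wedge (p' \wedge q)) = r(p) + r(p' \wedge q) - r(p \wedge q) = r(p)+1,
\]
using \eqref{eqn:modular} and $p \preceq p'$, so $u^*(q)$ is strictly between $p$ and $p'$ in rank.

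Next I will argue independence from $q$ using that ${\cal C}$ is a chain: for $q_1, q_2 \in {\cal C}_1$ comparable (say $q_1 \preceq q_2$), monotonicity gives $u^*(q_1) \preceq u^*(q_2)$, and both have rank $r(p)+1$, so they coincide. Let $u^*$ denote the common value. The key structural observation, which drives both the rank identities and uniqueness, is that for \emph{any} atom $u$ with $p \prec_1 u \prec_1 p'$ we have $p \wedge q \preceq u \wedge q \preceq p' \wedge q$; since $q \in {\cal C}_1$ means the outer interval has rank difference $1$, the intermediate element $u \wedge q$ must coincide with one of the endpoints. Moreover, $u \wedge q = p' \wedge q$ forces $u \succeq p \vee (p' \wedge q) = u^*$, and by equality of ranks $u = u^*$. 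Hence for $u \neq u^*$ we have $u \wedge q = p \wedge q$, while for $u^*$ we have $u^* \wedge q = p' \wedge q$.

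From this dichotomy the two claimed identities follow by direct substitution: $r(u^*) - r(u^* \wedge q) = (r(p)+1) - r(p' \wedge q) = r(p) - r(p \wedge q) = r(u) - r(u \wedge q) - 1$, and similarly for the second equation. Uniqueness of $u^*$ is also a consequence of the dichotomy: any element satisfying the rank identities must satisfy $u^* \wedge q \succ u \wedge q$ for some $q \in {\cal C}_1$ and every other atom $u$, which by the dichotomy forces $u^* \wedge q = p' \wedge q$ and therefore $u^* = p \vee (p' \wedge q)$. I do not anticipate a serious obstacle; the only point requiring care is the consistent use of the modular law in the form \eqref{eqn:modular} and its consequence $x \vee (a \wedge b) = (x \vee a) \wedge b$ when $x \preceq b$, which is exactly what allows the two expressions for $u^*$ to agree and the rank computations to telescope.
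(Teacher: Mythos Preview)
Your proof is correct and follows essentially the same approach as the paper: both define $u^* = p \vee (p' \wedge q)$, verify $p \prec_1 u^* \prec_1 p'$ via the modular rank identity, establish the dichotomy $u \wedge q \in \{p \wedge q,\, p' \wedge q\}$ for atoms $u$ of $[p,p']$, and deduce the rank equalities and uniqueness from it. Your argument for independence of $u^*$ from $q$ (monotonicity plus equal rank) is slightly cleaner than the paper's contradiction via $u^* \wedge u^{**} = p$, but the underlying idea is the same.
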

Intuitively speaking, this $u^*$ is the element {\em closest} to ${\cal C}$ 
among elements $u$ with $p \prec_1 u \prec_1 p'$; 
see Figure~\ref{fig:g}.
The element $u^*$ plays an important role, and is denoted by $g(p,p',{\cal C})$. 
	\begin{figure}[t]
		\begin{center}
			\includegraphics[scale=0.6]{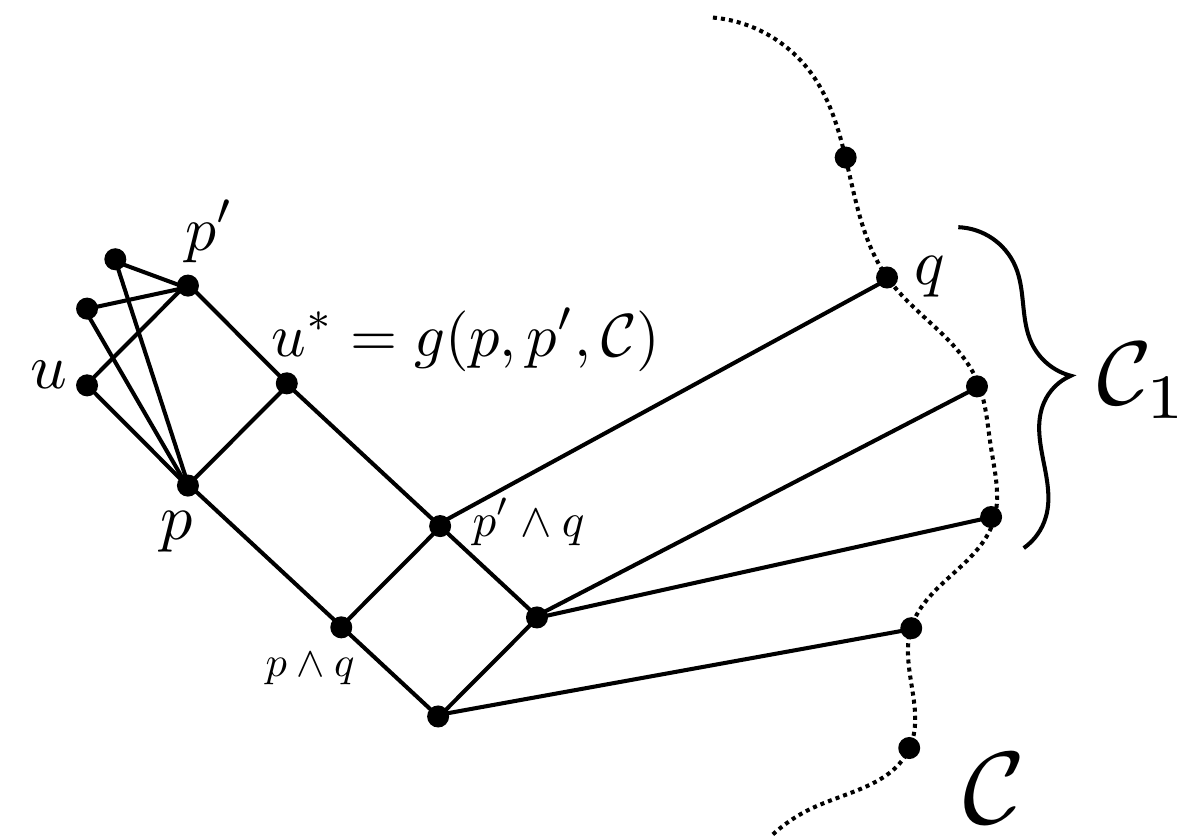}
			\caption{$g(p,p',{\cal C})$}
			\label{fig:g}
		\end{center}
	\end{figure}\noindent
\begin{proof}
	Let $q \in {\cal C}_1$.
	Then $p \wedge q \prec_1 p' \wedge q$.
	Let $u^* := p \vee (p' \wedge q)$. 
	Then $p \prec_1 u^* \prec_1 p'$ and $p' \wedge q = u^* \wedge q$.  Thus 
	\begin{equation*}
	r(u^*) - r(u^* \wedge q) = r(p') - r(p' \wedge q) - 1.
	\end{equation*}
	Consider $u \neq u^*$ with $p \prec_1 u \prec_1 p'$.
	If $p' \wedge q \preceq u$, 
	then $p' \wedge q \preceq u \wedge u^* = p$ and $p' \wedge q = p \wedge q$, which 
	contradicts $p \wedge q \prec_1 p' \wedge q$.
	Thus $p' \wedge q \not \preceq u$.
	Therefore $u \vee (p' \wedge q) = p'$, and
	$p \wedge q \preceq u \wedge q \prec_1 p' \wedge q$. 
	Necessarily $p \wedge q = u \wedge q$. Hence we have
	\begin{equation*}
	r(u) - r(u \wedge q) = r(p') - r(p' \wedge q) = r(u^*)  - r(u^* \wedge q) + 1,
	\end{equation*}
	which also implies the second equality by $r(u) = r(u^*)$.

	Next we show that $u^*$ is independent of $q$.
	Consider another $q' \in {\cal C}_1$.
	We may assume that $q \prec q'$. Let $u^{**} := p \vee (p' \wedge q')$.
	If $u^*$ and $u^{**}$ are different, 
	then $p' \wedge q  \preceq u^* \wedge u^{**} = p$; 
	this is a contradiction (to $p \wedge q \prec_1 p' \wedge q)$.
\end{proof}
In the case of a Boolean lattice, $g$ is simply described as follows:
\begin{Lem}\label{lem:g_Boolean}
	Suppose that ${\cal L}$ is a Boolean lattice $2^{\{1,2,\ldots,m\}}$.
	For $X,X' \subseteq \{1,2,\ldots, m\}$ 
	with $X' \setminus X = \{a,b\}$ and a chain ${\cal C}$,  
	it holds $g(X,X',{\cal C}) = X \cup \{a\}$ if and only 
	if ${\cal C}$ has a member $Z$ with $a \in Z \not\ni b$.
\end{Lem}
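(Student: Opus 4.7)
The plan is to specialize the general formula for $g$ from Lemma~\ref{lem:2-interval} to the Boolean setting, where everything becomes set-theoretic.

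First, I would recall that Lemma~\ref{lem:2-interval} gives the explicit description $g(X,X',{\cal C}) = X \vee (X' \wedge Z)$ for any $Z \in {\cal C}_1$, and that this value does not depend on the choice of such $Z$. In the Boolean lattice $2^{\{1,2,\ldots,m\}}$ we have $\vee = \cup$, $\wedge = \cap$, and $r = |\cdot|$, so the formula becomes $g(X,X',{\cal C}) = X \cup (X' \cap Z)$. Using $X' = X \cup \{a,b\}$ (where $a,b \notin X$), we simplify: $X \cup (X' \cap Z) = X \cup (X \cap Z) \cup (\{a,b\} \cap Z) = X \cup (\{a,b\} \cap Z)$.

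Next, I would characterize membership in ${\cal C}_1$ in these terms. The condition $r(X' \wedge Z) - r(X \wedge Z) = 1$ becomes $|X' \cap Z| - |X \cap Z| = |(X' \setminus X) \cap Z| = |\{a,b\} \cap Z| = 1$, i.e., $Z$ contains exactly one of $a,b$. Combined with the previous simplification, this means: if $Z \in {\cal C}_1$ has $a \in Z$, $b \notin Z$, then $g = X \cup \{a\}$; if instead $b \in Z$, $a \notin Z$, then $g = X \cup \{b\}$.

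With these pieces in place, both directions of the equivalence are immediate. If ${\cal C}$ has a member $Z$ with $a \in Z \not\ni b$, then such $Z$ automatically lies in ${\cal C}_1$ (since $|\{a,b\} \cap Z| = 1$), and the formula yields $g(X,X',{\cal C}) = X \cup \{a\}$. Conversely, if $g(X,X',{\cal C}) = X \cup \{a\}$, then picking any $Z \in {\cal C}_1$ that realizes this value forces $\{a,b\} \cap Z = \{a\}$, producing the required member of ${\cal C}$. There is no real obstacle here; the statement is essentially a direct unpacking of the defining formula for $g$, and the only thing to watch is that ${\cal C}_1$ is nonempty whenever the hypothesis provides a witness $Z$, which is automatic from the computation above.
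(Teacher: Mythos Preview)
Your argument is correct; it is exactly the direct specialization of the formula $g(p,p',{\cal C}) = p \vee (p' \wedge q)$ from Lemma~\ref{lem:2-interval} to the Boolean case, which the paper evidently has in mind since it states Lemma~\ref{lem:g_Boolean} without proof. The only minor point worth noting is that the statement of the lemma does not explicitly assume ${\cal C}_1 \neq \emptyset$, but your forward direction correctly observes that the hypothesis ``${\cal C}$ has a member $Z$ with $a \in Z \not\ni b$'' already forces $Z \in {\cal C}_1$, so $g$ is defined.
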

We say that ${\cal C}$ contains $a$ {\em before} $b$ if 
${\cal C}$ has a member $Z$ with $a \in Z \not\ni b$.

Let ${\cal L},{\cal M}$ and $A$ be as before.
Let ${\cal C} = \{ (X_i,Y_i)\}_{i=0}^{n+m}$ be a maximal chain of ${\cal L} \times \check{\cal M}$.
Then it holds that
\begin{eqnarray*}
(\{ 0\},V) = (X_0,Y_0) \prec (X_1,Y_1) \prec \cdots \prec (X_{m+n},Y_{m+n}) = (U, \{0\}),
\end{eqnarray*}
where  
\begin{equation*}
	X_{i-1} = X_i,\ Y_{i-1} \prec_1 Y_i\quad {\rm or}\quad X_{i-1} \prec_1 X_i,\ Y_{i-1} = Y_i.
\end{equation*}
for each $i \in \{1,2,\ldots,m+n\}$.
Let ${\cal X}_{\cal C}$ denote the maximal chain of 
${\cal L}$ obtained by projecting ${\cal C}$ to ${\cal L}$:
\[
{\cal X}_{\cal C} := \{ X \in {\cal L} \mid \exists Y \in \check{\cal M}: (X,Y) \in {\cal C} \}.
\]
Similarly, let ${\cal Y}_{\cal C}$ denote the maximal chain of $\check{\cal M}$ defined by
\[
{\cal Y}_{\cal C} := \{ Y \in \check{\cal M} \mid \exists X \in {\cal L}: (X,Y) \in {\cal C} \}.
\]

Let ${\cal C}' = \{(X_i',Y_i')\}$ be another maximal chain of ${\cal L} \times \check{\cal M}$.
Two chains ${\cal C}$ and ${\cal C'}$ are said to be {\em adjacent} 
if $|{\cal C} \cap {\cal C}'| = m+ n-1$.
If ${\cal C}$ and ${\cal C}'$ are adjacent, then there uniquely exists an index 
$i \in \{1,2,\ldots,m+n\}$ 
such that $(X_j,Y_j) = (X_j',X_j')$ holds for all $j \in \{0,1,2,\ldots,m+n\} \setminus \{i\}$ 
and one of the following holds for $(X_i,Y_i) \neq (X_i',Y_i')$:
\begin{itemize}
	\item[(0)] $X_{i-1} \prec_1 X_{i+1}$, $Y_{i-1} \prec_1 Y_{i+1}$, and 
	$\{(X_{i},Y_{i}),(X'_{i},Y'_i)\} = \{(X_{i-1},Y_{i+1}),(X_{i+1},Y_{i-1})\}$.  
	\item[(1)] $X_{i-1} \prec_1 X_i \prec_1 X_{i+1}$, 
	            $X_{i-1} \prec_1 X'_i \prec_1 X_{i+1}$, and $Y_{i-1} = Y_i = Y_i' = Y_{i+1}$.
	\item[(2)] $Y_{i-1} \prec_1 Y_i \prec_1 Y_{i+1}$, 
	$Y_{i-1} \prec_1 Y'_i \prec_1 Y_{i+1}$, and $X_{i-1} = X_i = X_i' = X_{i+1}$.
\end{itemize} 
${\cal C}$ and ${\cal C}'$ are said to be {\em $0$-adjacent} 
if (0) holds, {\em ${\cal L}$-adjacent} if (1) holds, 
and {\em ${\cal M}$-adjacent} if (2) holds.

Also ${\cal C}$ and ${\cal C}'$ are said to be 
{\em $A$-orthogonally ${\cal L}$-adjacent} from ${\cal C}$ to ${\cal C}'$ if (1) holds with
\begin{equation*}
X'_i = g(X_{i-1},X_{i+1},{{\cal Y}_{\cal C}}^{\bot}),
\end{equation*} 
and {\em $A$-orthogonally ${\cal M}$-adjacent} from ${\cal C}$ to ${\cal C}'$ if (2) holds with
\begin{equation*}
Y'_i = g(Y_{i-1},Y_{i+1},{{\cal X}_{\cal C}}^{\bot}).
\end{equation*}
Intuitively speaking, if ${\cal C}$ and ${\cal C}'$ are 
$A$-orthogonally ${\cal L}$-adjacent from ${\cal C}$ to ${\cal C}'$, 
then the transition from ${\cal C}$ to ${\cal C}'$ 
is close to ${{\cal Y}_{\cal C}}^{\bot}$ (with nonincreasing $R$).
%

A sequence $({\cal C}_0,{\cal C}_1,\ldots,{\cal C}_{\ell})$ is 
called a {\em gallery} if for each $i \in \{1,2,\ldots,\ell\}$, 
${\cal C}_{i-1}$ and ${\cal C}_i$ are adjacent, and 
is called an {\em $A$-orthogonal gallery} if for each $i \in \{1,2,\ldots,\ell\}$,
${\cal C}_{i-1}$ and ${\cal C}_i$ are $0$-adjacent,  
or $A$-orthogonally ${\cal L}$- or ${\cal M}$-adjacent from ${\cal C}_{i-1}$ to ${\cal C}_i$.  
\begin{Lem}\label{lem:A-ortho_gallery}
	For an $A$-orthogonal gallery $({\cal C} = {\cal C}_0,{\cal C}_1,\ldots,{\cal C}_{\ell})$, 
	it holds
	\[
	{\cal C}_{\ell} \subseteq \langle {\cal X}_{\cal C} \cup {{\cal Y}_{\cal C}}^\bot \rangle \times \langle {{\cal X}_{\cal C}}^{\bot} \cup {\cal Y}_{\cal C} \rangle,
	\]
	where $\langle {\cal Z} \rangle$ denotes the sublattice generated by ${\cal Z}$.
\end{Lem}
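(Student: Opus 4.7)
The plan is to prove the claim by induction on $\ell$, with a strengthened self-dual hypothesis. Abbreviating
\[
{\cal A} := \langle {\cal X}_{\cal C} \cup {\cal Y}_{\cal C}^\bot\rangle, \qquad {\cal B} := \langle {\cal X}_{\cal C}^\bot \cup {\cal Y}_{\cal C}\rangle,
\]
I would propagate the invariant ${\cal X}_{{\cal C}_\ell} \cup {\cal Y}_{{\cal C}_\ell}^\bot \subseteq {\cal A}$ and ${\cal X}_{{\cal C}_\ell}^\bot \cup {\cal Y}_{{\cal C}_\ell} \subseteq {\cal B}$, which clearly implies the desired ${\cal C}_\ell \subseteq {\cal A} \times {\cal B}$. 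Including the $\bot$-images is essential since the very definition of $A$-orthogonal ${\cal L}$-adjacency uses ${\cal Y}_{{\cal C}_\ell}^\bot$ inside $g(X_{i-1},X_{i+1},{\cal Y}_{{\cal C}_\ell}^\bot)$ (and symmetrically for the ${\cal M}$-side). The base case $\ell=0$ is immediate from the definitions.

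For the inductive step I isolate the unique new pair $(X_i',Y_i')$ in ${\cal C}_{\ell+1} \setminus {\cal C}_\ell$ and treat each adjacency type separately. The $0$-adjacency case is immediate: both coordinates already occur in ${\cal C}_\ell$. For $A$-orthogonal ${\cal L}$-adjacency (the ${\cal M}$-case is symmetric), $Y_i' = Y_i$ lies in ${\cal B}$ by induction, and Lemma~\ref{lem:2-interval} supplies the explicit formula $X_i' = X_{i-1} \vee (X_{i+1} \wedge q)$ for any valid $q \in {\cal Y}_{{\cal C}_\ell}^\bot$. The strengthened hypothesis gives $q \in {\cal A}$ and $X_{i\pm 1} \in {\cal A}$, so closure of ${\cal A}$ under $\wedge$ and $\vee$ yields $X_i' \in {\cal A}$.

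The main obstacle is maintaining the dual half of the invariant, namely $(X_i')^\bot \in {\cal B}$. Taking $\bot$ of the formula and applying Lemma~\ref{lem:bot}(2) gives $(X_i')^\bot = X_{i-1}^\bot \cap (X_{i+1} \wedge q)^\bot$, but $\bot$ does not in general convert the remaining meet into a join, so we do not automatically land in ${\cal B}$. To circumvent this, I would fix once and for all an $A$-orthogonal frame ${\cal F} = \langle e_1,\ldots,e_m,f_1,\ldots,f_n\rangle$ with ${\cal X}_{\cal C} \cup {\cal Y}_{\cal C}^\bot \subseteq \langle e_1,\ldots,e_m\rangle$ and ${\cal X}_{\cal C}^\bot \cup {\cal Y}_{\cal C} \subseteq \langle f_1,\ldots,f_n\rangle$, which exists by Theorem~\ref{thm:R(x,y)}(1). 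Inside ${\cal F}$ everything becomes Boolean, with ${e_i}^\bot = f_i$ for $i \leq r$ and ${e_i}^\bot = V$ otherwise, so $\bot$ reduces to a concrete set-theoretic operation, and ${\cal A},{\cal B}$ appear as distributive ``staircase'' sublattices generated by pairs of chains (Lemma~\ref{lem:frame}). The closure assertions ${\cal A}^\bot \subseteq {\cal B}$ and ${\cal B}^\bot \subseteq {\cal A}$ then become routine set-theoretic checks in the frame.

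The final step is to verify that the gallery never leaves ${\cal F}$. This follows because each new $X_i'$ is a lattice combination of elements already in $\langle e_1,\ldots,e_m\rangle$: using the Boolean identification, Lemma~\ref{lem:g_Boolean} identifies $X_i'$ (in frame coordinates) as one of the two candidates $X_{i-1} \cup \{a\}$ or $X_{i-1} \cup \{b\}$, and a short case analysis using the induction hypothesis on ${\cal Y}_{{\cal C}_\ell}^\bot$ confirms which candidate lies in ${\cal A}$. Assembling these ingredients closes the induction and yields the lemma.
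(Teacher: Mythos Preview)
Your inductive framework is the same as the paper's: both strengthen the hypothesis to
\[
{\cal X}_{{\cal C}_k} \cup {\cal Y}_{{\cal C}_k}^\bot \subseteq {\cal A},\qquad {\cal X}_{{\cal C}_k}^\bot \cup {\cal Y}_{{\cal C}_k} \subseteq {\cal B},
\]
and both dispatch $X_i' \in {\cal A}$ via the explicit formula $X_i' = X_{i-1}\vee(X_{i+1}\wedge q)$ from Lemma~\ref{lem:2-interval}. The genuine divergence is in the hard step, namely $(X_i')^\bot \in {\cal B}$.

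The paper does \emph{not} pass to a frame. It argues intrinsically: either $(X_i')^\bot$ coincides with $X_{i-1}^\bot$ or $X_{i+1}^\bot$ (and we are done), or else $X_{i-1}^\bot \prec_1 (X_i')^\bot \prec_1 X_{i+1}^\bot$ in $\check{\cal M}$. In the latter case it uses Lemma~\ref{lem:formula_R} to translate the rank identity of Lemma~\ref{lem:2-interval} for $X_i'$ relative to ${\cal Y}_k^\bot$ into the dual rank identity for $(X_i')^\bot$ relative to ${\cal Y}_k$; uniqueness in Lemma~\ref{lem:2-interval} then forces $(X_i')^\bot = g(X_{i-1}^\bot,X_{i+1}^\bot,{\cal Y}_k) \in \langle {\cal X}_k^\bot \cup {\cal Y}_k\rangle$. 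This is short and avoids any appeal to Theorem~\ref{thm:R(x,y)}(1).

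Your route---fix an $A$-orthogonal frame via Theorem~\ref{thm:R(x,y)}(1) and observe that on the frame $\bot$ becomes the Boolean map $S \mapsto S\cap\{1,\dots,r\}$ (and dually $T \mapsto (T\cap\{1,\dots,r\})\cup\{r{+}1,\dots,m\}$), hence a lattice homomorphism---also works and is not circular, since Theorem~\ref{thm:R(x,y)}(1) is proved independently of this lemma. What you call ``routine'' does hide the one nontrivial point: to get ${\cal A}^\bot \subseteq {\cal B}$ you need the images of the generators, and in particular ${\cal Y}_{\cal C}^{\bot\bot}\subseteq{\cal B}$; this comes down to $Y^{\bot\bot}=Y+U^\bot$ (equivalently $T\cap\{1,\dots,r\}$ in frame coordinates, with $U^\bot\in{\cal X}_{\cal C}^\bot\subseteq{\cal B}$). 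Once that is said, your global closure statements ${\cal A}^\bot\subseteq{\cal B}$ and ${\cal B}^\bot\subseteq{\cal A}$ do the job uniformly. Your final paragraph invoking Lemma~\ref{lem:g_Boolean} and a case analysis is then superfluous: $X_i'\in{\cal A}$ is already immediate from the formula and the inductive hypothesis, without identifying which of $X_{i-1}\cup\{a\}$, $X_{i-1}\cup\{b\}$ it is.
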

\begin{proof}
Let ${\cal X}_k := {\cal X}_{{\cal C}_k}$ and ${\cal Y}_k :={\cal Y}_{{\cal C}_k}$.
	Since ${\cal C}_k \subseteq \langle {\cal X}_k \cup 
	{{\cal Y}_{k}}^\bot \rangle \times \langle {{\cal X}_{k}}^{\bot} \cup {\cal Y}_{k} \rangle$, 
    It suffices to show 
    \[
    \langle {\cal X}_{k+1} \cup {{\cal Y}_{k+1}}^\bot \rangle
    \times \langle {{\cal X}_{k+1}}^{\bot} \cup {\cal Y}_{k+1} \rangle \subseteq 
    \langle {\cal X}_{k} \cup 	{{\cal Y}_{k}}^\bot \rangle \times 
    \langle {{\cal X}_{k}}^{\bot} \cup {\cal Y}_{k} \rangle.
    \] 
	It is obvious when
	${\cal C}_k$ and ${\cal C}_{k+1}$ are $0$-adjacent, 
	since ${\cal X}_{k} = {\cal X}_{k+1}$ 
	and ${\cal Y}_{k} = {\cal Y}_{k+1}$. 
	We may assume that ${\cal C}_k$ and ${\cal C}_{k+1}$ are ${\cal L}$-adjacent.
	It suffices to that 
	$X_i' \in {\cal X}_{k+1} \setminus {\cal X}_{k}$ 
belongs to $\langle {\cal X}_{k} \cup {{\cal Y}_k}^\bot \rangle$, and 
	${X_i'}^{\bot}$ belongs 
	to $\langle {{\cal X}_k}^{\bot} \cup {\cal Y}_{k} \rangle$.
	The former claim follows from Lemma~\ref{lem:2-interval} that
	$X_i'$ is represented 
	as $X_{i-1} \vee (X_{i+1} \wedge {Y_j}^{\bot})$ for some ${Y_j}^{\bot} \in {{\cal Y}_k}^\bot$.
	
	We show the latter claim. 
	By ${X_{i-1}}^{\bot} \preceq {X'_{i}}^{\bot} \preceq {X_{i+1}}^{\bot}$ 
	and $r({X_{i-1}}^{\bot}) - r({X_{i+1}}^{\bot}) \leq r(X_{i+1})- r(X_{i-1}) = 2$ (Lemma~\ref{lem:bot}), 
	it suffices to consider the case where ${X_{i-1}}^{\bot} \prec_1 {X'_{i}}^{\bot} \prec_1 {X_{i+1}}^{\bot}$. 
By $r(X'_i) - r(X_i' \wedge {Y_j}^{\bot}) = r(X_i) - r(X_i \wedge {Y_j}^{\bot}) -1$ and Lemma~\ref{lem:formula_R}, it holds.
\begin{equation*}
R(X'_i,Y_j) = R(X_i,Y_j) - 1.
\end{equation*}
This in turn implies that $r(Y_j) - r(Y_j \cap {X'_i}^{\bot}) 
= r(Y_j) - r(Y_j \cap {X_i}^{\bot}) - 1$.
	By Lemma~\ref{lem:2-interval},  ${X_{i}'}^{\bot}$ 
	must be $g({X_{i-1}}^{\bot}, {X_{i+1}}^{\bot},{\cal Y}_{k})$, 
	and belongs to $\langle {{\cal X}_k}^{\bot} \cup {\cal Y}_{k} \rangle$ as above.
\end{proof}

For a geodesic $[z,z'] \subseteq K({\cal L} \times \check{\cal M})$ and $t \in [0,1]$,
let $z^t := (1-t) z + t z'$, and
let $K_t$ denote the simplex containing $z^t$
as its relative interior.
The collection $\{ K_t\}_{t \in [0,1]}$ of simplices is finite, 
since $[z,z']$ belongs to (finite complex) $K({\cal F}) \simeq [0,1]^{n+m}$ 
for some frame ${\cal F}$.
A geodesic $[z,z']$ is said to be {\em generic} 
if $K_0$ has dimension $n+m$, and $K_t$ 
has dimension $n+m$ or $n+m-1$ for $t \in (0,1)$.
A generic geodesic $[z,z']$ gives rise to 
a gallery $({\cal C}_0,{\cal C}_1,\ldots, {\cal C}_{\ell})$ as follows.
Let ${\cal C}_0$ be a maximal chain 
corresponding to the simplex $K_0$ containing $z$ as its interior. 
For some $t_1 > 0$, 
the point $z^{t_1}$ reaches the boundary of $K_0$, 
which is a face of $K_0$ having dimension $n+m-1$.
For $t \in (t_1,t_2)$ for some $t_2 > t_1$, the point 
$z^t$ lies on the next maximal simplex $K_1$ adjacent to $K_0$.
Let ${\cal C}_1$ denote the maximal chain corresponding to $K_1$.
Then ${\cal C}_0$ and ${\cal C}_1$ are adjacent.
As $t \rightarrow 1$, 
we obtain a gallery $({\cal C}_0,{\cal C}_1,\ldots,{\cal C}_{\ell})$.
The main lemma is the following.
\begin{Lem}\label{lem:generic}
	Let $(x^*,y^*)$ be the minimizer of P2.
	If geodesic $[(x^0,y^0),(x^*,y^*)]$ is generic, 
	then the corresponding gallery $({\cal C}_0,{\cal C}_1,\ldots,{\cal C}_{\ell})$ is $A$-orthogonal.
\end{Lem}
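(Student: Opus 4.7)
The plan is to argue by contradiction. Suppose the gallery $({\cal C}_0, {\cal C}_1, \ldots, {\cal C}_\ell)$ associated with the geodesic $\gamma$ from $(x^0, y^0)$ to $(x^*, y^*)$ is not $A$-orthogonal, and let $k$ be the smallest index at which the transition ${\cal C}_{k-1} \to {\cal C}_k$ is not $A$-orthogonal. By the symmetry between ${\cal L}$- and ${\cal M}$-adjacency, I may assume this transition is ${\cal L}$-adjacent at some position $i$, with new element $X_i' \neq u^* := g(X_{i-1}, X_{i+1}, {{\cal Y}_{{\cal C}_{k-1}}}^{\bot})$ and the active set ${\cal Y}_1$ of Lemma~\ref{lem:2-interval} nonempty. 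Since the objective $f$ of P2 is strongly convex and admits $(x^*, y^*)$ as its unique minimizer, it will suffice to exhibit a point $\tilde z$ with $f(\tilde z) < f(x^*, y^*)$ to conclude.

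The construction of $\tilde z$ is by reflection. Let $K_k^*$ be the maximal simplex corresponding to the chain ${\cal C}_k^*$ obtained from ${\cal C}_k$ by replacing $X_i'$ with $u^*$; the three simplices $K_{k-1}$, $K_k$, $K_k^*$ all share the codim-1 face $F$ labelled by the partial chain ${\cal C}_{k-1} \setminus \{(X_i, Y_i)\}$. On the union $K_k \cup K_k^*$, glued along $F$, there is a natural isometric reflection $\sigma$ that fixes $F$ pointwise and swaps $K_k \leftrightarrow K_k^*$. Since $K({\cal L} \times \check{\cal M})$ is (locally) a spherical building of type $A$, this reflection extends to an isometry of the star of $F$, and by iterating similar reflections along the walls crossed by the tail of $\gamma$ after $\gamma(t_k)$ we obtain a reflected tail $\gamma'$ of the same length ending at a point $\tilde z$. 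Concatenating $\gamma|_{[0,t_k]}$ with $\gamma'$ gives a path from $(x^0, y^0)$ to $\tilde z$ of length equal to $d((x^0,y^0), (x^*,y^*))$, hence $d((x^0,y^0), \tilde z) \leq d((x^0,y^0), (x^*,y^*))$.

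It remains to show $\overline{R}(\tilde z) < \overline{R}(x^*, y^*)$. By Lemma~\ref{lem:2-interval}, for every $Y_j$ with $Y_j^{\bot} \in {\cal Y}_1$ one has $R(u^*, Y_j) = R(X_i', Y_j) - 1$, while for other $Y_j$ the $R$-value is unchanged. Writing $(x^*, y^*)$ in barycentric coordinates on $K_\ell$ and $\tilde z$ on the reflected chamber, the two Lov\'asz extensions differ exactly at the coefficients corresponding to elements replaced by $\sigma$; genericity of the geodesic ensures that at such a position the coefficient is strictly positive at a $Y_j$ in the active chain. The cleanest way to verify this is to pick an $A$-orthogonal frame ${\cal F}$ (existing by Theorem~\ref{thm:R(x,y)}~(1)) containing the chains ${\cal X}_{{\cal C}_{k-1}} \cup {{\cal Y}_{{\cal C}_{k-1}}}^{\bot}$ on one side and their $\sigma$-images on the other, and to work inside the cube $K({\cal F}) \simeq [0,1]^{m+n}$, where by Theorem~\ref{thm:Lovasz} $\overline{R}$ takes the form $\sum_{i=1}^{r} \max\{0, x_i - y_i\}$. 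In these coordinates $\sigma$ corresponds to a coordinate swap, and the decrease $\overline{R}(\tilde z) < \overline{R}(x^*,y^*)$ becomes a transparent Euclidean inequality. Combined with the distance bound, this yields $f(\tilde z) < f(x^*, y^*)$, a contradiction.

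The main obstacle is the consistent propagation of the reflection $\sigma$ through the entire tail of the gallery and the simultaneous control of $\overline{R}$ along it. The subtlety is that successive transitions in the tail may be ${\cal L}$- or ${\cal M}$- or $0$-adjacent, and the reflection $\sigma$ needs to transform each such step into a well-defined adjacency in the reflected gallery; for this one must verify that $\sigma$-images of adjacencies are again adjacencies, and moreover that the active sets ${\cal Y}_1$ do not vanish after reflection. The combinatorial engine making this work is Lemma~\ref{lem:2-interval} applied repeatedly, together with Lemma~\ref{lem:bot} governing how $(\cdot)^{\bot}$ interacts with joins and meets. The genericity hypothesis, which forces $\gamma$ to cross only codim-1 faces, makes the reflection well-defined at every crossing and guarantees that some barycentric coefficient stays strictly positive where $\overline{R}$ strictly drops.
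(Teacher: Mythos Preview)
Your overall strategy---reflect part of the geodesic and contradict the uniqueness of the minimizer---is the same as the paper's, but you have the indexing backwards, and this is a genuine gap. You take $k$ to be the \emph{first} index at which the transition fails to be $A$-orthogonal. Then the tail $({\cal C}_k,\ldots,{\cal C}_\ell)$ may contain further non-$A$-orthogonal steps, and you have no a priori reason for it to lie in any single frame. You acknowledge this as ``the main obstacle'' and propose to propagate the reflection through the tail step by step, but you do not actually carry this out; the appeals to Lemma~\ref{lem:2-interval} ``repeatedly'' and to building-type isometries of stars are not made precise, and there is no mechanism here that forces the reflected transitions to remain adjacencies, let alone to control $\overline{R}$ along them.

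The paper avoids this difficulty entirely by choosing $\ell'$ to be the \emph{minimum index such that the tail $({\cal C}_{\ell'},\ldots,{\cal C}_\ell)$ is already $A$-orthogonal}, i.e.\ it locates the \emph{last} bad step. Then Lemma~\ref{lem:A-ortho_gallery} immediately places the entire tail, and in particular ${\cal C}_\ell \ni (x^*,y^*)$, inside a single $A$-orthogonal frame $\langle e_1,\ldots,e_m,f_1,\ldots,f_n\rangle$ built from ${\cal X}_{{\cal C}_{\ell'}}$ and ${\cal Y}_{{\cal C}_{\ell'}}$. In that frame the reflection is literally the swap of two coordinates $a,b$ (those with $X_{j+1}\setminus X_{j-1}=\{a,b\}$), so no propagation is needed: one swaps $x^*_a \leftrightarrow x^*_b$ (and in one subcase also $y^*_a \leftrightarrow y^*_b$) and compares $\overline{R}$ termwise via Theorem~\ref{thm:Lovasz}. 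A second point: you claim the strict inequality $\overline{R}(\tilde z) < \overline{R}(x^*,y^*)$, but in several cases (e.g.\ when both $a,b>r$, or when one also swaps $y$-coordinates) one only gets $\overline{R}(\tilde z)\le \overline{R}(x^*,y^*)$; the contradiction then comes from $\tilde z \neq (x^*,y^*)$ together with the \emph{uniqueness} of the minimizer, not from a strict decrease of $\overline{R}$.
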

In particular, if $[(x^0,y^0),(x^*,y^*)]$ is generic, 
then the chain ${\cal C}_{\ell}$ including the support of the minimizer $(x^*,y^*)$ belongs to 
the sublattice 
$\langle {\cal X} \cup  {\cal Y}^\bot \rangle \times \langle {\cal X}^{\bot} \cup {\cal Y} \rangle$
(by Lemma~\ref{lem:A-ortho_gallery}), which belongs to
an $A$-orthogonal frame ${\cal F}$ 
satisfying (\ref{eqn:XuYbot}) to prove Theorem~\ref{thm:R(x,y)}~(2).

\begin{proof}
We may assume $\ell \geq 1$.
Let $\ell'$ be the minimum index such that 
$({\cal C}_{\ell'},{\cal C}_{\ell'+1},\ldots,{\cal C}_{\ell})$ 
is $A$-orthogonal.
If $\ell' = 0$, then the gallery is $A$-orthogonal as required.
Suppose to the contrary that $\ell' > 0$.
We may assume that 
${\cal C}_{\ell'-1}$ and ${\cal C}_{\ell'}$ are ${\cal L}$-adjacent 
and are not $A$-orthogonal.
Let ${\cal X} = \{ X_i \}_{i=0}^{n+m} := {\cal X}_{{\cal C}_{\ell'-1}}$, 
${\cal X}' := {\cal X}_{{\cal C}_{\ell'}}$ 
and  ${\cal Y} := {\cal Y}_{{\cal C}_{\ell'-1}} = {\cal Y}_{{\cal C}_{\ell'}}$.
For some $j$, we have 
${\cal X}' = {\cal X} \setminus \{X_j\} \cup \{X_j'\}$, where
$X_j' \neq g(X_{j-1},X_{j+1}, {\cal Y}^{\bot})$ (or $g(X_{j-1},X_{j+1}, {\cal Y}^{\bot})$ is not defined).
Consider an $A$-orthogonal frame $\langle e_1,e_2,\ldots,e_m,f_1,f_2,\ldots,f_n \rangle$
containing $\langle {\cal X}' \cup {\cal Y}^{\bot} \rangle \times \langle {{\cal X}'}^{\bot} \cup {\cal Y} \rangle$.

For some $t \in (0,1)$,	
the point $(x^t,y^t) = (1-t) (x^0,y^0) + t (x^*,y^*)$  
belongs to the intersection of 
maximal simplices corresponding to ${\cal C}_{\ell'-1}$ and ${\cal C}_{\ell'}$.
By Lemma~\ref{lem:A-ortho_gallery}, the frame $\langle e_1,e_2,\ldots,e_m,f_1,f_2,\ldots,f_n \rangle$ contains ${\cal C}_{\ell}$. 
Regard $\langle e_1,e_2,\ldots,e_m \rangle$ as $2^{\{1,2,\ldots,m\}}$.
Then $X_{j+1} \setminus X_{j-1} = \{a,b\}$, 
$X_j' = \{a\} \cup X_{j-1}$, and $\tilde X_j := \{b\} \cup X_{j-1}$ 
for distinct elements $a,b \in \{1,2,\ldots,m\}$.
Also $K(\langle e_1,e_2,\ldots,e_m,f_1,f_2,\ldots,f_n \rangle) \simeq [0,1]^m \times [0,1]^n$
contains both $(x^t,y^t)$ and $(x^*,y^*)$.
Now $[(x^t,y^t),(x^*,y^*)]$ is the segment in $[0,1]^m \times [0,1]^n$

Consider $x^t$ and $x^*$ 
in the $\langle e_1,e_2,\ldots,e_m \rangle$-coordinate, 
In the original coordinate $x^t = \sum_i \lambda_i X_i$, 
the coefficient $\lambda_j$ of $X_j$ is zero.
Thus $x^t_a = x^t_b$ holds. 
In $x^{t+ \epsilon}$ for small $\epsilon > 0$,
the coefficient of $X'_i$ becomes positive.
This means that $x^{t+ \epsilon}_a > x^{t+ \epsilon}_b$.
Consequently $x_a^* > x_b^*$ holds.		
Let $\tilde x$ be obtained from $x^*$ 
by interchanging the $a$-th and $b$-th coordinates of $x^*$.		
By $x^t_a = x^t_b$, it holds
\begin{equation*}
d(x^t,x^*) = d(x^t,\tilde x).
\end{equation*}
By $d(x^0,\tilde x) \leq d(x^0,x^t) + d(x^t,\tilde x) = d(x^0,x^t) + d(x^t,x^*) = d(x^0, x^*)$,
we have
\begin{equation}\label{eqn:d(x')}
d(x^0,\tilde x) \leq d(x^0, x^*).
\end{equation}

\noindent	
{\bf Case 1:} $g(X_{j-1},X_{j+1},{\cal Y}^{\bot})$ is defined.
We are going to show 
\begin{equation}\label{eqn:contra}
\overline{R}(\tilde x, y^*) + \frac{1}{2\lambda} ( d(x^0,\tilde x)^2 + d(y^0,y^*)^2 ) 
\leq \overline{R}(x^*, y^*) + \frac{1}{2\lambda} ( d(x^0,x^*)^2 + d(y^0,y^*)^2 ), 
\end{equation}
which is a contradiction to its unique optimality of $(x^*,y^*)$.
Notice that $\tilde X_i = g(X_{j-1},X_{j+1}, {\cal Y}^{\bot})$ 
also belongs to $\langle e_1,e_2,\ldots,e_m \rangle$
(since it is generated by $X_{j-1}$, $X_{j+1}$, and ${\cal Y}^{\bot}$).
Since $X_j' = \{a\} \cup X_{j-1} \neq g(X_{j-1},X_{j+1},{\cal Y}^{\bot})$,  
by Lemma~\ref{lem:g_Boolean}, 
chain ${\cal Y}^{\bot}$ contains $b$ before $a$. 
Then ${\cal Y}^{\bot \bot}$ contains $b$ before $a$.
This must be $a \in \{1,2,\ldots,r\}$.
Consider $y^t$ 
in the $\langle f_1,f_2,\ldots,f_n \rangle$-coordinate.

\noindent
{\bf Case 1-1}: $b \in \{1,2,\ldots,r\}$.  
In this case, ${\cal Y}$ also contains $b$ before $a$,
since ${\cal Y}$ is obtained from ${\cal Y}^{\bot \bot}$ 
by adding elements $r+1,r+2,\ldots,n$ (see the proof of Theorem~\ref{thm:R(x,y)}~(1)).
Thus 
\begin{equation*}
y^t_b > y^t_a.
\end{equation*}

\noindent
{\bf Case 1-1-1}: $y_b^* \geq y_a^*$. 
Recall Theorem~\ref{thm:Lovasz} that $\overline R(x^*,y^*)$ is given by
\[
\overline{R}(x^*,y^*) = \sum_{i=1}^r \max \{0, x_i^* - y_i^*\}.
\]
By $x_a^* > x^*_b$ and $y_b^* \geq y_a^*$, it is easy to verify 
\begin{eqnarray*}
\max\{0, x_a^* - y_a^*\} + \max\{0, x_b^* - y_b^*\} 
&\geq &\max\{0, x_b^* - y_a^*\} + \max\{0, x_a^* - y_b^*\} \\
& =& 
\max\{0, \tilde x_a - y_a^*\} + \max\{0, \tilde x_b - y_b^*\}.
\end{eqnarray*}
For example, 
if $x_a^* \geq y_b^* \geq x_b^* \geq y_a^*$, 
the LHS is $x_a^* - y_a^*$ and the RHS is 
$(x_b^* - y_a^*) + (x_a^*- y_b^*)  \leq x_a^* - y_a^*$. 
Thus we obtain
\begin{equation*}
\overline{R}(\tilde x, y^*) \leq \overline{R}(x^*,y^*), 
\end{equation*}
By (\ref{eqn:d(x')}), we have contradiction (\ref{eqn:contra}).

\noindent
{\bf Case 1-1-2}: $y_b^* < y_a^*$.

Let $\tilde y$ be obtained from $y^*$ 
by interchanging the $a$-th and $b$-th coordinates of $y^*$.
Clearly
\begin{equation*}
\overline{R}(\tilde x, \tilde y) = \overline{R}(x^*,y^*).
\end{equation*}
Since $y^t_b > y^t_a$ and $y_b^* < y_a^*$, 
it must hold $y^{t'}_a = y^{t'}_b$ for some $t' > t$, 
and hence $d(y^{t'},y^*) = d(y^{t'},\tilde y)$.
Thus we have
\begin{equation*}
 d(y^0,\tilde y) \leq  d(y^0,y^{t'}) + d(y^{t'},\tilde y) = d(y^0,y^{t'}) + d(y^{t'},y^*) 
= d(y^0, y^*)   
\end{equation*}
Then we obtain a contradiction:
\begin{equation*}
\overline{R}(\tilde x, \tilde y) + \frac{1}{2\lambda} ( d(x^0,\tilde x)^2 + d(y^0,\tilde y)^2 ) 
\leq \overline{R}(x^*, y^*) + \frac{1}{2\lambda} ( d(x^0,x^*)^2 + d(y^0,y^*)^2 ). 
\end{equation*}

\noindent
{\bf Case 1-2}: $b \in V^{\bot}$ i,e., $b > r$.
By $\max \{0, x^*_a - y^*_a \} \geq \max \{0, x^*_b - y^*_a\} = 
\max \{0, \tilde x_a - y^*_a\}$, and $\overline{R}(x^*,y^*) \geq \overline{R}(\tilde x, y^*)$. 
we obtain a contradiction (\ref{eqn:contra}).

\noindent
{\bf Case 2:} $g(X_{j-1},X_{j+1}, {\cal Y}^{\bot})$ 
is not defined. 
In this case, both $a$ and $b$ belong to $V^{\bot}$, 
Namely $a,b > r$ holds.
Then $\overline{R}(\tilde x, y^*) = \overline{R}(x^*,y^*)$.
Thus we obtain (\ref{eqn:contra}). 
\end{proof}
Finally we remove the genericity assumption.
\begin{Lem}\label{lem:remove}
	For $z^0 = (x^0,y^0) \in K({\cal L} \times \check{\cal M})$, and a maximal simplex $K$ containing $z^0$,
	there is $z \in K$ 
	such that 
	\begin{itemize}
		\item[{\rm (1)}] $[z,J^{\overline{R}}_{\lambda}(z)]$ is generic, and
		\item[{\rm (2)}]  $J^{\overline{R}}_{\lambda}(z^0)$ is contained in the simplex containing $J^{\overline{R}}_{\lambda}(z)$ as its relative interior.  
	\end{itemize}
\end{Lem}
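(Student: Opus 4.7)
The plan is a perturbation argument exploiting the continuity (in fact 1-Lipschitz property) of the resolvent $\psi := J^{\overline{R}}_\lambda$ on the CAT(0)-space $K({\cal L} \times \check{\cal M})$. Write $z^* := \psi(z^0)$, and let $K'$ denote the unique simplex containing $z^*$ in its relative interior. I will find the required $z$ as a sufficiently generic point inside a small open neighborhood of $z^0$, intersected with the relative interior of $K$.

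For condition (2), $\psi$ is nonexpansive: it is the argmin of a strongly convex function $w \mapsto \overline{R}(w) + (2\lambda)^{-1} d(z,w)^2$ whose $z$-dependent part is 1-Lipschitz in $z$, and on a CAT(0)-space such resolvents are nonexpansive. The open star of $K'$ --- the union of open simplices whose closure contains $K'$ --- is an open neighborhood $W$ of $z^*$, so $U := \psi^{-1}(W) \cap \mathrm{int}(K)$ is a nonempty relatively open subset of $\mathrm{int}(K)$ having $z^0$ in its closure. For every $z \in U$ the simplex $\tilde K$ containing $\psi(z)$ in its relative interior satisfies $K' \subseteq \overline{\tilde K}$, and in particular $z^* = J^{\overline{R}}_\lambda(z^0) \in \tilde K$, yielding (2).

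For condition (1), I will show that the non-generic locus $\{z \in \mathrm{int}(K) : [z,\psi(z)] \text{ is not generic}\}$ is nowhere dense. By Lemma~\ref{lem:frame}, given $z$ and $\psi(z)$ there is a frame $\cal F$ with $[z,\psi(z)] \subseteq K({\cal F}) \simeq [0,1]^{m+n}$, in which the geodesic is a straight segment under the orthoscheme triangulation. Non-genericity amounts to this segment traversing an open face of codimension at least $2$. Using the explicit coordinate-wise Euclidean updates that solve Problem~P2$'$, one sees that $\mathrm{int}(K)$ decomposes into finitely many pieces on each of which the active frame is fixed and $z \mapsto (z,\psi(z))$ is affine; on each piece, the set of $z$ whose segment meets a fixed codimension-$\geq 2$ face is a finite union of codimension-$\geq 1$ affine subsets. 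Hence the full non-generic locus is contained in a finite union of lower-dimensional subsets of $\mathrm{int}(K)$, and its complement is open and dense.

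Intersecting $U$ with the complement of the non-generic locus produces a non-empty open set of valid choices. The main technical obstacle is justifying the piecewise affine structure of $\psi$ used in the genericity step, since $\psi$ is only continuous and not globally smooth: one must control the transitions where the active frame $\cal F$ or the combinatorial type of $[z,\psi(z)]$ jumps, and verify that only finitely many such pieces arise as $z$ ranges over $\mathrm{int}(K)$. Finiteness of the underlying combinatorial data --- frames, simplices, and faces --- together with the explicit Euclidean form of P2$'$ on each piece, makes this tractable.
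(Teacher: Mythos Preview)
Your overall strategy is right, and your handling of condition~(2) via the open star of $K'$ is essentially equivalent to what the paper does by continuity of the resolvent. The gap is in condition~(1). You justify the piecewise-affine structure of $\psi = J^{\overline{R}}_\lambda$ by invoking ``the explicit coordinate-wise Euclidean updates that solve Problem~P2$'$.'' But the equivalence of P2 and P2$'$ is precisely Theorem~\ref{thm:R(x,y)}(2), and Lemma~\ref{lem:remove} is one of the two ingredients used to prove Theorem~\ref{thm:R(x,y)}(2). So this appeal is circular. Relatedly, your claim that ``finiteness of the underlying combinatorial data --- frames, simplices, and faces'' makes the piece decomposition tractable is false as stated: over an infinite field there are infinitely many subspaces, hence infinitely many frames and simplices in $K({\cal L}\times\check{\cal M})$; finiteness must be \emph{engineered}, not assumed.

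The paper engineers it as follows, avoiding both problems. First, after perturbing $z^0$ into $\mathrm{int}(K)$, it replaces $z^0$ by a nearby point $z'$ for which the simplex $K^*$ containing $J(z')$ in its relative interior has \emph{maximal} dimension; then for all $u$ in a small ball, $J(u)$ lies in $\mathrm{relint}(K^*)$. This pins down a \emph{single} frame ${\cal F}$ containing both $K$ and $K^*$ (Lemma~\ref{lem:frame}), so everything now lives in $[0,1]^{m+n}$ with its finite simplicial structure. Second, the affine formula for $J$ does not come from P2$'$: since $\overline{R}$ is linear on the simplex $K^*$ (Lov\'asz extension is affine on each simplex) and $J(u)\in\mathrm{relint}(K^*)$, first-order optimality on the affine hull $\{Av=b\}$ of $K^*$ gives $J(u)=(I-A^\top(AA^\top)^{-1}A)u+c'$ directly. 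Finally, one still needs that $\varphi_t(u)=(1-t)u+tJ(u)$ has nonsingular linear part for $t\in(0,1)$ --- otherwise the preimage of a codimension-$2$ face could be full-dimensional; the paper checks this via the eigenvalues of the projection $A^\top(AA^\top)^{-1}A$. Your sketch does not address this nonsingularity either.
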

Thus we can choose points $x$ and $y$ from any maximal simplices 
corresponding to ${\cal X}$ and ${\cal Y}$ 
such that $[(x,y), J_{\lambda}^{\overline{R}}(x,y)]$ 
is generic, and the simplex $K^*$ containing $J_{\lambda}^{\overline{R}}(x,y)$ in its relative interior contains $(x^*,y^*) = J_{\lambda}^{\overline{R}}(x^0,y^0)$.
Therefore, for any $A$-orthogonal frame ${\cal F}$ 
satisfying (\ref{eqn:XuYbot}), the subcomplex $K({\cal F})$ 
contains  $J_{\lambda}^{\overline{R}}(x,y)$ and $K^* \ni (x^*,y^*)$.
\begin{proof}
	First notice that $J = J_{\lambda}^{\overline{R}}$ 
	is nonexpansive~\cite{Jost95}, and is continuous.
	Let $B(z, \epsilon)$ denote the open ball with center $z$ and radius $\epsilon > 0$.
	For sufficiently small $\epsilon >0$ and every $u \in B(z^0, \epsilon)$, 
	the simplex $K^*$ containing 
	$J(u)$ as its relative interior also contains~$J(z^0)$. 
	Thus, by perturbing $z^0$, 
	we can assume in advance that $z^0$ belongs to the interior of $K$.
	Let $\epsilon > 0$ be sufficiently small so that $B(z^0,\epsilon) \subseteq K$.
	We can replace $z^0$ by a point $z'$ in 
 $B(z^0, \epsilon)$ that 
	maximizes the dimension of 
  the simplex $K^*$ containing $J(z')$ as its relative interior.
	Then we can assume that for sufficiently small $\epsilon > 0$, 
	the image $J(B(z^0, \epsilon))$ 
   belongs to the relative interior of $K^* (\ni J(z^0))$.

	It suffices to show that there is $z \in B(z^0, \epsilon)$ 
	such that $[z, J(z)]$ is generic.
    Consider a frame ${\cal F}$ containing the supports of $z$ and $K^*$.
    Regard $K({\cal F}) \simeq [0,1]^{n+m}$.
    Consider the affine hull of $K^*$, 
    which is represented by linear equation $A u = b$.
    In $K^*$, Lov\'asz extension $\overline R$ 
    is a linear function $u \mapsto c^{\top} u$.
    For every $u \in B(z^0, \epsilon)$, 
   resolvent $J(u)$ is the unique minimizer of 
    \begin{equation*}
    {\rm Min.}\ c^{\top} v + \frac{1}{2\lambda} \|v - u \|_2^2 \quad  {\rm s.t.} \ Av = b.
    \end{equation*}  
    This is an equality-constrained quadratic program.
    By the Lagrange multiplier method, 
    we obtain an explicit formula of $J$:
    \begin{equation*}
    J(u) =(I - A^{\top} (AA^{\top})^{-1}A) u + c',
    \end{equation*}
    where $c'$ is a constant vector.
    Consider geodesic (segment) $[u,J(u)]$.
    For each $t \in (0,1)$, 
    define $\varphi_t: B(z^0,\epsilon) \to [0,1]^{n+m}$ by
    \[
       \varphi_t (u) := (1-t) u + t J(u)  = (I - t A^{\top} (AA^{\top})^{-1}A) u  + t c'.
    \]
    Here $A^{\top} (AA^{\top})^{-1}A$ is a projection, 
    and its eigenvalue is $0$ or $1$.
    Hence
     $(I - t A^{\top} (AA^{\top})^{-1}A)$ is nonsingular for $t \in (0,1)$.
    This implies that $\varphi_t(B(z^0, \epsilon))$ 
    is an open neighborhood of $\varphi_t(z)$ for $t \in (0,1)$.
    Suppose that open segment $(z^0,J(z^0))$ meets 
    simplices $F_1,F_2,\ldots,F_{\ell}$ of dimension at most $n+m-2$.
    Now $\epsilon$ is small. For every $u \in B(z^0,\epsilon)$, 
    any simplex of dimension at most $n+m-2$ which $(u,J(u))$ can meet
    is one of $F_1,F_2,\ldots,F_{\ell}$.
    For $i \in \{1,2,\ldots,\ell\}$, 
the set of points $u \in B(z^0, \epsilon)$ 
with $\varphi_t(u) \in F_i$ belongs to 
an affine subspace of dimension $n+m-2$.
Consequently, the set of points $u \in B(z^0, \epsilon)$ 
with $\varphi_t(u) \in F_i$ for some $t \in (0,1)$, i.e., 
$(u,J(u))$ meets $F_i$, 
must belong to a hypersurface ${\cal H}_i$ (of dimension $n+m-1$).
Therefore, choose $z$ from 
$B(z^0, \epsilon) \setminus \bigcup_{i=1}^{\ell} {\cal H}_i$.
Then $(z,J(z))$ meets none of simplices $F_1,F_2,\ldots,F_{\ell}$. 
Namely $[z, J(z)]$ is generic, as required.	
\end{proof}

\section{Block-triangularization of partitioned matrix}\label{sec:DM}
In this section, we present implications of Theorem~\ref{thm:main'} 
on a block-triangularization of a partitioned matrix.
\subsection{DM-decomposition}
Let $A = (A_{\alpha \beta})$ be a partitioned matrix as above.
Consider MVSP for $A$. 
A vanishing subspace $(X_1,X_2,\ldots,X_\mu, Y_1,Y_2,\ldots,Y_\nu)$ 
is simply denoted by $(X,Y)$, 
where $X$ and $Y$ denote tuples of subspaces $X_{\alpha}$ and $Y_{\beta}$, respectively.
We say that $(X,Y)$ 
is a vanishing subspace with dimension
$\dim X + \dim Y$, where 
$\dim X: = \sum_{\alpha} \dim X_{\alpha}$ and $\dim Y := \sum_{\beta} \dim Y_{\beta}$.
Formally speaking, $(X,Y)$ represents subspace 
$\bigoplus_{\alpha} X_\alpha \times  \bigoplus_{\beta} Y_\beta$ 
of $\bigoplus_{\alpha} {\bf F}^{m_{\alpha}} \times \bigoplus_{\beta} {\bf F}^{n_{\beta}}$   
on which the bilinear form $\bigoplus_{\alpha} {\bf F}^{m_{\alpha}} \times \bigoplus_{\beta} {\bf F}^{n_{\beta}} \to {\bf F}$ defined by
\[
(u,v) \mapsto \sum_{\alpha, \beta} u_{\alpha}^{\top}A_{\alpha \beta} v_{\beta}
\]
vanishes, where $u_{\alpha}$ (resp. $v_{\beta}$) 
is the natural projection of $u$ to ${\bf F}^{m_{\alpha}}$ (resp. ${\bf F}^{n_{\beta}}$).
A vanishing subspace of a maximum dimension is 
called a maximum vanishing subspace, 
abbreviated as an {\em mv-subspace}.

Let ${\cal S} = {\cal S}_{A}$ denote the modular lattice of all mv-subspaces for $A$, 
where the partial order is given by $(X,Y) \preceq (X',Y')$ if and only if 
$X_{\alpha} \subseteq X_{\alpha}'$ and $Y_{\beta} \supseteq Y_{\beta}'$ for each $\alpha,\beta$.
Consider a chain $(X^0,Y^0) \prec (X^1,Y^1) \prec \cdots \prec (X^{\ell},Y^{\ell})$ 
of mv-subspaces.
For each $\alpha$, choose a base $E_{\alpha} = \{e^{\alpha}_1,e^{\alpha}_2,\ldots, e^{\alpha}_{m_{\alpha}}\}$ of ${\bf F}^{m_{\alpha}}$
such that $E_\alpha^k = \{e^{\alpha}_1,e^{\alpha}_2,\ldots, e^{\alpha}_{k_{\alpha}}\}$, 
for some $k_{\alpha}$, is a base of $X_{\alpha}^{k}$ for $k=1,2,\ldots,\ell$.
For each $\beta$, choose a base $F_{\beta} = \{f^{\beta}_1,f^{\beta}_2,\ldots, f^{\beta}_{n_{\beta}}\}$ of ${\bf F}^{n_{\beta}}$
such that $F_{\beta}^{k} = \{f^{\beta}_1,f^{\beta}_2,\ldots, f^{\beta}_{k_{\beta}}\}$, 
for some $k_{\beta}$, is a base of $Y_{\beta}^{k}$ for $k=1,2,\ldots,\ell$.
Then $\bigcup_{\alpha} E_{\alpha}$ is regarded as a base of ${\bf F}^m$
via canonical injection ${\bf F}^{m_{\alpha}} \hookrightarrow \bigoplus_{\alpha} {\bf F}^{m_{\alpha}}$, 
and $\bigcup_{\beta} F_{\beta}$ is regarded as a base of ${\bf F}^n$ similarly.
Also $\bigcup_{\alpha} E_{\alpha}^k$ is a base of $X^k$, 
and $\bigcup_{\beta} F_{\beta}^k$ is a base of $Y^k$.
Then the change of the bases gives rise to a transformation of the form (\ref{eqn:trans}). 
By rearranging rows and columns, 
we obtain the following block-triangular form:
\begin{equation}\label{eqn:block_triangularization}
\left( 
\begin{array}{ccccc}
D_{ \ell+1} & & & & \\
O & D_{\ell} & & & \\ 
O & O  & \ddots & &  \\
\vdots & \vdots & \ddots  & \ D_{1} & \\[0.7em]
O      & O      & \cdots & O & D_{0}  
\end{array}
\right),
\end{equation}
where the diagonal block $D_{k}$ is a square matrix 
of size $\dim X^{k+1} - \dim X^{k} = \dim Y^{k} - Y^{k+1}$
for $k=1,2,\ldots,\ell$, 
$D_0$ is a matrix of $\dim X_0$ rows and $n - \dim Y_0 (< \dim X_0)$ columns  
and $D_{\ell +1}$ is a matrix of $m - \dim X_{\ell} (< \dim Y_{\ell})$ rows and  
$\dim Y_{\ell}$ columns.

For any vanishing subspace $(X,Y)$ of $A$, recall the introduction 
that the following inequality holds:
\begin{equation*}
\dim X + \dim Y \leq m + n - \rank A.
\end{equation*} 
In particular, $m + n - \rank A$ is an upper bound 
of the maximum vanishing dimension, though it is not attained in general. 
Ito, Iwata, and Murota~\cite{ItoIwataMurota94} 
mainly focus the case where this bound is attained.
In this case, the resulting block-triangular 
form~(\ref{eqn:block_triangularization}) 
satisfies the rank-condition that each $D_k$ is of row- or column-full rank.
Such a block triangular form is particularly called {\em proper}.
We here do not impose the properness on decomposition~(\ref{eqn:block_triangularization}).

The {\em DM-decomposition} of $A$ is 
the most refined block triangularization such that 
the chain of mv-subspaces is taken to be maximal in ${\cal S}$.
The original DM-decomposition~\cite{DulmageMendelsohn58} 
corresponds to the case of $m_{\alpha} = n_{\beta} = 1$ for all $\alpha,\beta$. 
The {\em combinatorial canonical form (CCF)} 
for a multilayered mixed matrix~\cite{MurotaIriNakamura87}  
corresponds to the case of $n_{\beta} = 1$ for all $\beta$.
There are polynomial time algorithms (based on bipartite matching and matroid union) 
to obtain DM-decompositions for these cases, 
whereas no polynomial time algorithm is known for the general case.

MVSP asks for one mv-subspace.
On the other hand, 
the DM-decomposition needs
a maximal chain of mv-subspaces.
Therefore, solving MVSP
is not enough to 
obtaining the DM-decomposition.

\paragraph{On the difficulty of DM-decomposition.}
Obtaining the DM-decomposition 
cannot avoid issues of numerical analysis/computation and 
the algebraically closedness of base field ${\bf F}$. 
Consider the following partitioned matrix of type $(n,n;n,n)$
\begin{equation}\label{eqn:2x2}
\left(\begin{array} {cc}
A & B \\
C & D
\end{array} 
\right),
\end{equation}
where $A,B,C,D$ are all nonsingular. 
Finding the DM-decomposition of this matrix reduces to the eigenvalue problem as follows.
Suppose that 
$(X_1, X_2, Y_1, Y_2)$ 
is a vanishing subspace. 
By the nonsingularity of the submatrices, it must hold that  
$\dim X_{\alpha} + \dim Y_{\beta} \leq n$. 
Consequently, trivial vanishing subspaces $(\{0\},\{0\}, {\bf F}^n, {\bf F}^n)$ and 
$({\bf F}^n, {\bf F}^n, \{0\},\{0\})$  
are maximum with dimension $2n$. 
Suppose that 
$(X_1,X_2, Y_1,Y_2)$ is an mv-subspace.
Then it must hold $\dim X_{1} = \dim X_{2} = n - \dim Y_1 = n - \dim Y_2$.
Moreover, from
$A Y_1 = (X_1)^{\bot} = B Y_2$ and $C Y_1 = (X_1)^{\bot} = D Y_2$, we obtain
\[
(C^{-1}DB^{-1}A) Y_1 = Y_1,
\] 
where $(\cdot)^{\bot}$ means the orthogonal subspace with respect to 
the standard inner product.
If such $Y_1$ is given, 
then we can recover mv-subspace 
$(X_1, X_2, Y_1, Y_2)$. 
This implies that finding a maximal chain of mv-subspaces
is equivalent to finding a maximal chain of invariant subspaces 
of matrix $C^{-1}DB^{-1}A$.
In the case where the base field ${\bf F}$ is algebraically-closed,
the Schur decomposition finds such a chain of invariant subspaces and
triangularizes
$C^{-1}DB^{-1}A$ by a similarity transformation,    
where the resulting triangular form has all eigenvalues in diagonals.
Consequently, we obtain a maximal chain of mv-subspaces and 
the DM-decomposition with four diagonal blocks of size $2 \times 2$. 
In particular, the DM-decomposition may change 
when ${\bf F}$ is not algebraically-closed and
the matrix is considered in an extension field of ${\bf F}$.
A simple example of such a matrix (over ${\bf Q}$) 
is given in~\cite[6.2]{IwataMurota95}
 
A more difficult situation occurs.
Consider the following partitioned matrix of type 
$(n,n,n;n,n,n)$
\begin{equation}\label{eqn:3x3}
\left(\begin{array} {ccc}
A & B &  B' \\
C & D & D' \\
\tilde C & \tilde D & E
\end{array}\right),
\end{equation}
where all submatrices are nonsingular.
By the same argument, the maximum vanishing dimension is $3n$. 
Also, if $(X_1,X_2,X_3, Y_1,Y_2,Y_3)$ 
is an mv-subspace, then $Y_1$ must satisfy
\[
(C^{-1}DB^{-1}A) Y_1 = Y_1,\quad  
({\tilde C}^{-1}{\tilde D}{B}^{-1}A) Y_1 = Y_1, \quad 
({\tilde C}^{-1} {\tilde D}{D}^{-1}C) Y_1 = Y_1
\] 
Namely $Y_1$ is a common invariant subspace of three matrices.
Therefore the problem of finding the DM-decomposition
includes the {\em common invariant subspace problem}.
This extremely difficult problem undergoes current research in numerical analysis/computation 
(see e.g., \cite{ArapuraPeterson04,JamiolkowskiaPastuszakb15}), and 
a satisfactory algorithm is not yet obtained (as far as we recognize).

\subsection{Quasi DM-decomposition}

Here we introduce the concept of {\em quasi DM-decomposition}, which is
a block-triangular form {\em coarser} than the DM-decomposition 
but does not depend on base field ${\bf F}$ and still generalizes
important special cases (the original DM-decomposition and CCF).
It turns out that a quasi DM-decomposition 
corresponds exactly to a chain of mv-subspaces 
{\em detectable} by solving WMVSP, and is obtained in polynomial time.
We believe that obtaining a quasi DM-decomposition is a limit 
which we can do by combinatorial or optimization methods.

Let $A = (A_{\alpha \beta})$ be a partitioned matrix as above, and ${\cal S}$ 
the lattice of all mv-subspaces for $A$. 
A vanishing space $(X,Y)$ is said to be {\em trivial} 
if $X_{\alpha} = 0$ for each $\alpha$ or $Y_{\beta} = 0$ for each $\beta$.
Other vanishing spaces are said to be {\em nontrivial}.
$A$ is called {\em DM-irreducible} if ${\cal S}$ consists only of trivial mv-subspaces, 
and called {\em DM-regular} if ${\cal S}$ 
contains both of the trivial mv-subspaces, or equivalently, 
if the maximum vanishing dimension is equal to $n$ and $m$. 
In particular, a DM-regular matrix is necessarily a square matrix.
In the DM-decomposition~(\ref{eqn:block_triangularization}), 
each diagonal block $D_{k}$ is DM-irreducible, 
and is DM-regular if $1 \leq k \leq \ell$.

To formulate quasi DM-decomposition, 
we introduce the notion of the quasi DM-irreducibility.
Partitioned matrix $A$ is called {\em quasi DM-irreducible}
if for each nontrivial mv-subspace $(X,Y) \in {\cal S}$ 
there are positive integers $k,\ell$ with $k < \ell$
such that for all $\alpha,\beta$ it holds
\begin{equation}\label{eqn:quasi}
\frac{\dim X_{\alpha}}{m_{\alpha}} 
= \frac{n_{\beta} - \dim Y_{\beta}}{n_{\beta}} = \frac{k}{\ell}.
\end{equation}
This means that any nontrivial mv-subspace of a quasi DM-irreducible matrix has 
a common ratio of dimensions in ${\bf F}^{m_{\alpha}} \times {\bf F}^{n_{\beta}}$ 
for all $\alpha, \beta$.
Obviously the quasi DM-irreducibility is a weaker notion than the DM-irreducibility.
If $A$ is quasi DM-irreducible and admits a nontrivial mv-subspace $(X,Y)$, 
then  $\max(m,n) \leq \sum_{\alpha} \dim X_{\alpha} + \sum_{\beta} \dim Y_{\beta} 
= (k/\ell) \sum_{\alpha} m_{\alpha} + ((\ell-k)/\ell) \sum_{\beta} n_{\beta} 
= (k/ \ell) m + ((\ell-k)/\ell)n \leq \max (m,n)$, and
necessarily the maximum vanishing dimension is equal to $n= m$, 
which implies that $A$ is DM-regular.
In particular, the DM-irreducibility and quasi DM-irreducibility are the same 
for a non-square partitioned matrix.

For $n \geq 2$, 
any $n \times n$ nonsingular matrix $A$, 
viewed as a partition matrix of type $(n;n)$, 
is not DM-irreducible but quasi DM-irreducible.
Indeed, for any proper nonzero subspace $X$, 
$(X, (XA)^\bot)$ is a nontrivial mv-subspace 
with $(n - \dim (XA)^{\bot})/n = (n - (n- \dim X))/n = \dim X/n$.
Also, a partitioned matrix of form (\ref{eqn:2x2})
is quasi DM-irreducible and not DM-irreducible if ${\bf F}$ is algebraically closed.
More generally, 
any partitioned matrix of consisting $n \times n$ nonsingular 
submatrices, such as (\ref{eqn:3x3}), is quasi DM-irreducible.

A {\em quasi DM-decomposition} of $A$ is
a block-triangular form (\ref{eqn:block_triangularization}) 
such that each diagonal block is quasi DM-irreducible.
The quasi DM-decomposition still generalizes 
an important special case of CCF  ($n_{\beta} = 1$ for all $\beta$).
This fact follows from: 
\begin{Lem}
	Suppose that $A$ is DM-regular with $\gcd (m_1,m_2,\ldots,m_{\mu}, n_1,n_2,\ldots,n_{\nu}) = 1$. 
	Then $A$ is DM-irreducible if and only if $A$ is quasi DM-irreducible.
\end{Lem}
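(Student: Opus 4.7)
The plan is to observe that the forward implication is vacuous and then concentrate the work on the converse, where the coprimality hypothesis does all the work through a short divisibility argument.

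For the forward direction, suppose $A$ is DM-irreducible. Then $\mathcal{S}$ consists only of trivial mv-subspaces, so the universally quantified condition appearing in the definition of quasi DM-irreducibility has no instance to verify, and quasi DM-irreducibility follows vacuously.

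For the converse I would argue by contradiction. Assume $A$ is quasi DM-irreducible but admits a nontrivial mv-subspace $(X,Y)$. By quasi DM-irreducibility there exist positive integers $k,\ell$ with $k<\ell$ such that $\dim X_\alpha = (k/\ell)\, m_\alpha$ for every $\alpha$ and $\dim Y_\beta = ((\ell-k)/\ell)\, n_\beta$ for every $\beta$. I would then write $d := \gcd(k,\ell)$, $k = d k'$, $\ell = d \ell'$, so that $\gcd(k',\ell')=1$ and $\dim X_\alpha = (k'/\ell')\, m_\alpha$. Integrality of $\dim X_\alpha$ together with $\gcd(k',\ell')=1$ forces $\ell' \mid m_\alpha$, and by the analogous reasoning on the $Y$ side $\ell' \mid n_\beta$ for every $\beta$. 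Therefore $\ell'$ divides $\gcd(m_1,\ldots,m_\mu,n_1,\ldots,n_\nu)=1$, so $\ell'=1$ and hence $\ell \mid k$, contradicting $0<k<\ell$.

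The only step requiring any care is the divisibility extraction and the observation that $\ell'=1$ forces $\ell \leq k$; the rest is immediate from the definitions. DM-regularity of $A$ is not invoked in the argument itself, but is the natural setting for the lemma: it is what is satisfied by the diagonal blocks of interest in a DM-decomposition, and coprimality of their sizes is then the genuine obstruction to further combinatorial refinement beyond what quasi DM-decomposition achieves.
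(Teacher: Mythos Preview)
Your proof is correct and follows essentially the same route as the paper's own one-line argument: assuming a nontrivial mv-subspace with ratio $k/\ell$, extract a common divisor of all $m_\alpha, n_\beta$ greater than $1$, contradicting the gcd hypothesis. The paper simply asserts that ``$\ell$ becomes a common divisor of $m_\alpha, n_\beta$, which is greater than $1$,'' tacitly taking $k,\ell$ coprime; your version makes this reduction explicit by passing to $k',\ell'$, which is the more careful way to write it. Your observation that DM-regularity is not actually invoked in the argument is also accurate.
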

\begin{proof}
	If $A$ admits a nontrivial mv-subspace as in (\ref{eqn:quasi}), 
	then $\ell$ becomes a common divisor of $m_{\alpha},n_\beta$, which is greater than $1$.
\end{proof}

The main result of this section is the following.
\begin{Thm}\label{thm:q-DM}
	A quasi DM-decomposition of a partitioned matrix over ${\bf F}$ 
	can be obtained in polynomial time, provided arithmetic operations on ${\bf F}$ can be done in constant time.
\end{Thm}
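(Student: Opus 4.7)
The plan is a top-down recursive refinement algorithm. Given a partitioned matrix $B$ of type $(m_1, \ldots, m_\mu; n_1, \ldots, n_\nu)$, the algorithm either certifies that $B$ is quasi DM-irreducible, or produces a nontrivial mv-subspace $(X,Y)$ of $B$ which, via the block-triangularization~(\ref{eqn:block_triangularization}), splits $B$ into three strictly smaller partitioned submatrices on which the algorithm recurses.

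Central to the plan is the following characterization: $B$ is quasi DM-irreducible if and only if every mv-subspace $(X,Y)$ of $B$ satisfies
\[
n_\beta \dim X_\alpha + m_\alpha \dim Y_\beta = m_\alpha n_\beta \qquad \text{for every pair } (\alpha, \beta).
\]
Setting $p_\alpha := \dim X_\alpha/m_\alpha$ and $q_\beta := \dim Y_\beta/n_\beta$, the condition reads $p_\alpha + q_\beta = 1$. Trivial mv-subspaces satisfy this automatically. A nontrivial mv-subspace satisfies it if and only if both $p_\alpha$ and $q_\beta$ are uniform constants $p, q \in (0,1)$ with $p + q = 1$, since $p_{\alpha_1} + q_\beta = p_{\alpha_2} + q_\beta = 1$ forces $p_{\alpha_1} = p_{\alpha_2}$; writing $p = k/\ell$ in lowest terms recovers the ratio condition~(\ref{eqn:quasi}).

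To check this characterization algorithmically, for each pair $(\alpha^*, \beta^*) \in \{1, \ldots, \mu\} \times \{1, \ldots, \nu\}$ we solve WMVSP twice to optimize
$S_{\alpha^* \beta^*}(X,Y) := n_{\beta^*} \dim X_{\alpha^*} + m_{\alpha^*} \dim Y_{\beta^*}$
over mv-subspaces. Let $M := mn + 1$. For the maximization, use weights $C_{\alpha^*} = M + n_{\beta^*}$, $C_\alpha = M$ for $\alpha \neq \alpha^*$, $D_{\beta^*} = M + m_{\alpha^*}$, $D_\beta = M$ for $\beta \neq \beta^*$; the baseline $M$ is large enough to force the WMVSP optimum to be attained by an mv-subspace, while the perturbation $\pm n_{\beta^*}, \pm m_{\alpha^*}$ selects among mv-subspaces one with extremal $S_{\alpha^* \beta^*}$. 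The minimization uses the analogous weights with $-$ in place of $+$. If either extremum deviates from $m_{\alpha^*} n_{\beta^*}$ for any pair, the returned mv-subspace is nontrivial (trivial ones attain exactly $m_{\alpha^*} n_{\beta^*}$), violates the ratio condition, and is used to refine $B$; otherwise $B$ is quasi DM-irreducible.

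Complexity: each recursion level performs $O(\mu \nu)$ WMVSP calls with weights bounded by $W = O(mn)$, each polynomial by Theorem~\ref{thm:main'}. Any chain of mv-subspaces has length $O(m + n)$, so the refinement terminates within polynomially many levels. The main obstacle is to verify the characterization above, particularly the step deducing uniformity of $(p_\alpha)$ and $(q_\beta)$ from pairwise complementarity, and to confirm that the max-and-min WMVSP check over pairs detects every violation: if both extrema equal $m_{\alpha^*} n_{\beta^*}$ for every pair $(\alpha^*, \beta^*)$, then by sandwiching every mv-subspace $(X,Y)$ satisfies $S_{\alpha^*\beta^*}(X,Y) = m_{\alpha^*} n_{\beta^*}$ identically, so $B$ is quasi DM-irreducible.
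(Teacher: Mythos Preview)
Your approach is essentially correct and takes a genuinely different route from the paper. The paper first strips off the minimal and maximal mv-subspaces to reduce to the DM-regular case, and then for each single index $\alpha'$ (resp.\ $\beta'$) uses a carefully tuned WMVSP with weights of order $O(m^3)$ (Lemma~\ref{lem:CD}, Theorem~\ref{thm:q-DM_irreducible}) to detect whether some mv-subspace has $\dim X_{\alpha'}/m_{\alpha'}$ differing from the global ratio. Your pairwise criterion $n_{\beta}\dim X_{\alpha}+m_{\alpha}\dim Y_{\beta}=m_{\alpha}n_{\beta}$ is equivalent and arguably cleaner: it handles the DM-regular and non-DM-regular cases uniformly, needs no preliminary reduction, and uses weights of order $O(mn)$. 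Both approaches reduce the test for quasi DM-irreducibility to polynomially many WMVSP instances with polynomially bounded weights, so Theorem~\ref{thm:main'} applies either way.

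Two minor issues to address. First, your choice $M=mn+1$ is not quite justified: the perturbation $S_{\alpha^{*}\beta^{*}}$ ranges over $[0,\,2m_{\alpha^{*}}n_{\beta^{*}}]$, so the crude argument that the WMVSP optimum must be an mv-subspace requires $M>2m_{\alpha^{*}}n_{\beta^{*}}$; taking $M=2mn+1$ fixes this without affecting the polynomial bound. Second, ``three strictly smaller partitioned submatrices'' should be two: a single nontrivial mv-subspace $(X,Y)$ yields the two diagonal blocks $A^{X,Y^{c}}$ and $A^{X^{c},Y}$, and the recursion is on these (with correctness supplied by the analogue of Lemma~\ref{lem:recursive}, which you should state). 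With these adjustments your argument goes through.
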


The rest of this section is devoted to the proof of this theorem.
The algorithm is based on a simple recursive idea:
Find a nontrivial mv-subspace for $A$ by solving WMVSP with special weights.
If a nontrivial mv-subspaces $(X,Y)$ 
is found, then decompose $A$ into two matrices $A^{X,Y^c}$ and $A^{X^c,Y}$, 
and recurse into $A^{X,Y^c}$ and into $A^{X^c,Y}$.

Now suppose that we are given one (nontrivial) mv-subspace $(X,Y)$.
The two partitioned matrices $A^{X,Y^c}$ and $A^{X^c,Y}$ are constructed as follows. 
For each $\alpha$, choose
a complement $U_{\alpha}$ of $X_{\alpha}$. 
For each $\beta$, choose
a complement $V_{\beta}$ of $Y_{\beta}$. 
Let $A^{X,Y^c}_{\alpha \beta}$ be the matrix representation of
the restriction of $A_{\alpha \beta}$ to $X_{\alpha} \times V_{\beta}$.
Let $A^{X, Y^c} := (A^{X,Y^c}_{\alpha \beta})$ be the partitioned matrix 
consisting of the nonempty matrices among them.
Define $A^{X^c, Y} := (A^{X^c,Y}_{\alpha \beta})$ similarly.
\begin{Lem}\label{lem:recursive}
	Let $(X,Y)$ be an mv-subspace for $A$, and let $(X',Y')$ be 
	a vanishing subspace for $A$ such that $(X',Y') \preceq (X,Y)$.
	The following conditions are equivalent:
	\begin{itemize}
		\item[{\rm (1)}] $(X',Y')$ is an mv-subspace for $A$.
		\item[{\rm (2)}] $(X',Y')$ is represented as $(X',Y+Q)$ with 
		an mv-subspace $(X',Q)$ for $A^{X,Y^c}$. 
	\end{itemize}
\end{Lem}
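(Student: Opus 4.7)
The plan is to set up a natural correspondence between the two families of vanishing subspaces and observe that this correspondence differs by a dimension shift that is constant on the relevant region. Since $V_{\beta}$ is a complement of $Y_{\beta}$ in ${\bf F}^{n_{\beta}}$, any $Y'_{\beta}$ with $Y_{\beta} \subseteq Y'_{\beta}$ admits a unique direct-sum decomposition $Y'_{\beta} = Y_{\beta} \oplus Q_{\beta}$ where $Q_{\beta} := Y'_{\beta} \cap V_{\beta} \subseteq V_{\beta}$. Thus each tuple $(X',Y') \preceq (X,Y)$ is in bijection with a pair $(X',Q)$ satisfying $X'_{\alpha} \subseteq X_{\alpha}$ and $Q_{\beta} \subseteq V_{\beta}$, via $Y'_{\beta} = Y_{\beta} + Q_{\beta}$; these are precisely the pairs living in the ambient spaces of the partitioned matrix $A^{X,Y^c}$.

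Next I check that this bijection preserves the vanishing property. By bilinearity, $A_{\alpha \beta}(X'_{\alpha}, Y_{\beta} + Q_{\beta}) = \{0\}$ iff both $A_{\alpha \beta}(X'_{\alpha}, Y_{\beta}) = \{0\}$ and $A_{\alpha \beta}(X'_{\alpha}, Q_{\beta}) = \{0\}$. The first identity holds automatically because $X'_{\alpha} \subseteq X_{\alpha}$ and $(X,Y)$ is a vanishing subspace of $A$, while the second, by the very definition of $A^{X,Y^c}$, is exactly the vanishing condition for $(X',Q)$ with respect to $A^{X,Y^c}$. Hence the bijection restricts to a bijection between vanishing subspaces $(X',Y') \preceq (X,Y)$ of $A$ and vanishing subspaces $(X',Q)$ of $A^{X,Y^c}$.

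Finally I compare dimensions. The direct-sum decomposition gives
\[
\dim X' + \dim Y' \;=\; (\dim X' + \dim Q) + \dim Y,
\]
so the bijection shifts the objective by the constant $\dim Y$. It therefore suffices to show that the maximum vanishing dimension of $A^{X,Y^c}$ equals $\dim X$. The pair $(X,0)$ is trivially a vanishing subspace of $A^{X,Y^c}$ of dimension $\dim X$, providing the lower bound. Conversely, for any vanishing subspace $(X'',Q)$ of $A^{X,Y^c}$, its image $(X'', Y+Q)$ is a vanishing subspace of $A$ by the preceding paragraph, so by the maximality of $(X,Y)$,
\[
\dim X'' + \dim Y + \dim Q \;\leq\; \dim X + \dim Y,
\]
i.e.\ $\dim X'' + \dim Q \leq \dim X$. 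Combining these observations with the bijection yields: $(X',Y')$ is mv for $A$ iff $\dim X' + \dim Y' = \dim X + \dim Y$ iff $\dim X' + \dim Q = \dim X$ iff $(X',Q)$ is mv for $A^{X,Y^c}$. There is no substantive obstacle; the only point that needs care is that the decomposition $Y'_{\beta} = Y_{\beta} \oplus Q_{\beta}$ is well-defined once the complement $V_{\beta}$ is fixed, which ensures the map $(X',Y') \mapsto (X',Q)$ is unambiguous.
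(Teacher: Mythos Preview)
Your proof is correct and follows essentially the same route as the paper: define $Q_{\beta} := Y'_{\beta} \cap V_{\beta}$, use bilinearity together with $A_{\alpha\beta}(X'_{\alpha},Y_{\beta})=\{0\}$ to identify the vanishing conditions, and compare dimensions via $\dim Y'_{\beta} = \dim Y_{\beta} + \dim Q_{\beta}$. The paper's version is terser and leaves the identification of the maximum vanishing dimension of $A^{X,Y^c}$ with $\dim X$ implicit, whereas you spell this out explicitly; both arguments are the same in substance.
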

\begin{proof}
	Let $Q_{\beta} := V_{\beta} \cap Y'_{\beta}$ for $\beta$. 
	Then $Y'_{\beta} = Q_{\beta} + Y_{\beta}$.
	$A_{\alpha \beta} (X'_{\alpha}, Y'_{\beta}) 
	= A_{\alpha \beta}(X'_{\alpha},Y_{\beta}) + A_{\alpha \beta}(X'_{\alpha},Q_{\beta}) = A_{\alpha \beta}^{X,Y^c}(X'_{\alpha},Q_{\beta})$ (since 
	$A_{\alpha \beta}(X'_{\alpha},Y_{\beta}) \subseteq A_{\alpha \beta}(X_{\alpha},Y_{\beta}) = \{0\}$). 
	Thus $A_{\alpha \beta} (X'_{\alpha}, Y'_{\beta}) = \{0\}$ 
	if and only if $A_{\alpha \beta}^{X,Y^c}(X'_{\alpha},Q_{\beta}) = \{0\}$.
	The claim follows from this fact and $\dim Y'_{\beta} = \dim Q_{\beta} + \dim Y_{\beta}$.
\end{proof}

Next we consider to find a nontrivial mv-subspace by solving WMVSP. 
An mv-subspace $(X,Y)$ is called {\em extremal} if 
$(X,Y)$ is 
the unique optimal solution of WMVSP 
for some weights $C_{\alpha},D_{\beta}$.

The minimal and maximal mv-subspaces are extremal.
\begin{Lem}\label{lem:minimal_maximal}
	\begin{itemize}
		\item[{\rm (1)}] Define weights $C_{\alpha},D_{\beta}$ by
		\begin{equation}\label{eqn:maximal}
		C_{\alpha} := m+2 \quad (\alpha=1,2,\ldots,\mu)\quad 
		D_{\beta} := m+1 \quad (\beta = 1,2,\ldots,\nu).
		\end{equation}
		Then an optimal solution of WMVSP is unique, 
		and is equal to the maximal mv-subspace.
		\item[{\rm (2)}] Define weights $C_{\alpha},D_{\beta}$ by
		\begin{equation}\label{eqn:minimal}
		C_{\alpha} := n+1 \quad (\alpha=1,2,\ldots,\mu)\quad 
		D_{\beta} := n+2 \quad (\beta = 1,2,\ldots,\nu).
		\end{equation}
		Then an optimal solution of WMVSP is unique, and is equal to the minimal mv-subspace.
	\end{itemize}
\end{Lem}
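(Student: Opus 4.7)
The plan is to argue in two steps: first show that the chosen weights force every optimum of WMVSP to be an mv-subspace, and then isolate the specific extremal mv-subspace using the one-coordinate tilt between the $C$'s and the $D$'s.

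For part (1), let $(X^*, Y^*)$ denote the $\preceq$-maximum of the modular lattice $\mathcal{S}$ and write $d^* = \sum_\alpha \dim X^*_\alpha + \sum_\beta \dim Y^*_\beta$. The weighted objective evaluated on a vanishing subspace $(X',Y')$ of total dimension $d'$ rewrites as $(m+1)d' + \sum_\alpha \dim X'_\alpha$. Since $\sum_\alpha \dim X'_\alpha \leq \sum_\alpha m_\alpha = m$, any $(X',Y')$ with $d' < d^*$ is beaten by $(X^*, Y^*)$ by a margin of at least $(m+1) - m = 1$, so every optimum of WMVSP necessarily lies in $\mathcal{S}$. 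Among mv-subspaces the term $(m+1)d^*$ is constant, leaving us to maximize $\sum_\alpha \dim X'_\alpha$ over $\mathcal{S}$. The relation $(X',Y') \preceq (X^*, Y^*)$ gives $X'_\alpha \subseteq X^*_\alpha$ coordinate-wise, so the maximum is attained at $(X^*, Y^*)$; equality of the two sums together with this coordinate-wise inclusion forces $X'_\alpha = X^*_\alpha$ for every $\alpha$, and the identity $d' = d^*$ then pins down $Y'_\beta = Y^*_\beta$ for every $\beta$, giving uniqueness.

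Part (2) is completely symmetric. The plan is to rewrite the objective as $(n+1)d' + \sum_\beta \dim Y'_\beta$, use $\sum_\beta \dim Y'_\beta \leq n$ to rule out any $(X',Y')$ with $d' < d^*$, and then appeal to the $\preceq$-minimum $(X^{\min}, Y^{\min})$ of $\mathcal{S}$, whose components satisfy $Y^{\min}_\beta \supseteq Y'_\beta$ for every mv-subspace, to single out the unique maximizer. There is no real obstacle; the only thing to check is that the margin coefficient ($m+1$ in the first case, $n+1$ in the second) is chosen just large enough to dominate any reshuffling of dimensions across blocks while remaining exactly one unit below its paired weight, so that the tiebreak always tilts toward the correct extreme element of $\mathcal{S}$.
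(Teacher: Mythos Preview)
Your proof is correct and follows essentially the same approach as the paper: both rewrite the weighted objective as $(m+1)\,d' + \dim X'$ (respectively $(n+1)\,d' + \dim Y'$), use the crude bound $\dim X' \leq m$ to force any optimum to be an mv-subspace, and then invoke the lattice order on $\mathcal{S}$ to show the tiebreaker $\dim X'$ (respectively $\dim Y'$) is uniquely maximized at the $\preceq$-maximal (respectively $\preceq$-minimal) element. Your uniqueness step, deducing $X'_\alpha = X^*_\alpha$ from coordinate-wise inclusion plus equality of the sums and then $Y'_\beta = Y^*_\beta$ from $d' = d^*$, is slightly more explicit than the paper's one-line ``then $\dim X > \dim X'$,'' but the content is identical.
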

\begin{proof}
	It suffices to prove (1). 
	Let $(X,Y)$ be the unique maximal mv-subspace, and let $(X',Y')$ be an arbitrary vanishing subspace.
	Then 
	\begin{eqnarray}
	&& \sum_{\alpha} C_{\alpha}\dim X_{\alpha} + \sum_{\beta} D_{\beta} \dim Y_{\beta} - \sum_{\alpha} C_{\alpha}\dim X'_{\alpha} - \sum_{\beta} D_{\beta}\dim Y'_{\beta} \nonumber \\ 
	&& = (m+1) (\dim X + \dim Y - \dim X' - \dim Y') + \dim X - \dim X' \label{eqn:XX'}  
	\end{eqnarray}
	If $(X',Y')$ is not an mv-subspace, then (\ref{eqn:XX'}) $\geq m+1 - m > 0$.
	If $(X',Y')$ is a nonmaximal mv-subspace, then $\dim X > \dim X'$, and (\ref{eqn:XX'}) $> 0$.
	Thus $(X,Y)$ is the unique optimal solution of WMVSP.
\end{proof}
Therefore we may focus on a DM-regular partitioned matrix. 
\begin{Lem}\label{lem:CD}
	Suppose that $A$ is DM-regular.
	\begin{itemize}
		\item[{\rm (1)}] For $\alpha' \in \{1,2,\ldots,\mu \}$, define weights $C_{\alpha},D_{\beta}$ by 
		\begin{eqnarray}
		C_{\alpha} & := & \left\{ \begin{array}{ll}   
		m(2m_{\alpha'} + 1) (2m_{\alpha'} + 2) & {\rm if}\ \alpha = \alpha',  \\
		m(2m_{\alpha'} + 1)^2 & {\rm otherwise},
		\end{array}
		\right. (\alpha = 1,2,\ldots, \mu), \nonumber \\
		D_{\beta} & := & (2m_{\alpha'} + 1)(m(2m_{\alpha'} + 1) + m_{\alpha'})  \quad \quad (\beta = 1,2,\ldots, \nu). \label{eqn:CD1}
		\end{eqnarray}	
		Then any optimal solution of WMVSP is an mv-subspace.
		\item[{\rm (2)}] For $\beta' \in \{1,2,\ldots,\nu \}$, define weights $C_{\alpha},D_{\beta}$ by 
			\begin{eqnarray}
			C_{\alpha} & := & 
			(2n_{\beta'} + 1)(n(2n_{\beta'} + 1) + n_{\beta'}) \quad \quad (\alpha =1,2,\ldots,\mu),  \nonumber \\
			D_{\beta} & := & \left\{ \begin{array}{ll}   
			n(2n_{\beta'} + 1) (2n_{\beta'} + 2) & {\rm if}\ \beta = \beta', \\
			n(2n_{\beta'} + 1)^2 & {\rm otherwise},
			\end{array} 
			\right. \quad (\beta = 1,2,\ldots,\nu). \label{eqn:CD2}
			\end{eqnarray}	
		Then any optimal solution of WMVSP is an mv-subspace.
	\end{itemize}
\end{Lem}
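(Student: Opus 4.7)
The plan is to show by direct weight arithmetic that, with the weights prescribed in (\ref{eqn:CD1}), every vanishing subspace of non-maximum dimension has a strictly smaller weighted value than every mv-subspace, so any optimizer of WMVSP must itself be an mv-subspace. Since $A$ is DM-regular, $m = n$ and the maximum vanishing dimension equals $m$. Setting $k := 2m_{\alpha'}+1$ and $C_0 := mk^2$, one verifies by expansion that $C_{\alpha'} = C_0 + mk$, $C_\alpha = C_0$ for $\alpha \neq \alpha'$, and $D_\beta = C_0 + m_{\alpha'} k$ for every $\beta$. Consequently the WMVSP objective at any vanishing subspace $(X,Y)$ rearranges to
\[
  \sum_\alpha C_\alpha \dim X_\alpha + \sum_\beta D_\beta \dim Y_\beta
  \;=\; C_0\,(\dim X + \dim Y) + mk\,\dim X_{\alpha'} + m_{\alpha'} k\,\dim Y.
\]

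I then bound the two sides separately. For an mv-subspace $(X,Y)$, $\dim X + \dim Y = m$ and the remaining two terms are nonnegative, so the objective is at least $C_0 m$. For a non-mv vanishing subspace $(X',Y')$ we have $\dim X' + \dim Y' \leq m-1$, together with $\dim X'_{\alpha'} \leq m_{\alpha'}$ and $\dim Y' \leq m-1$; hence the last two terms sum to at most $m_{\alpha'} k (2m-1) < 2m\,m_{\alpha'} k$, and the whole objective is strictly less than $C_0(m-1) + 2m\,m_{\alpha'} k$.

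The proof of (1) is then closed by the one-line inequality $C_0(m-1) + 2m\,m_{\alpha'} k < C_0 m$, which simplifies to $mk^2 > 2m\,m_{\alpha'} k$, i.e.\ $k > 2m_{\alpha'}$; and this is exactly why $k$ was chosen to be $2m_{\alpha'}+1$. Part (2) follows by the evident row/column symmetry: applying (1) to the transposed partitioned matrix (of type $(n_1,\ldots,n_\nu;m_1,\ldots,m_\mu)$) with distinguished row-block index equal to $\beta'$ reproduces precisely the weights in (\ref{eqn:CD2}) and the same argument with $k' := 2n_{\beta'}+1$, $D_0 := nk'^2$ goes through verbatim. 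I do not foresee a serious obstacle here: the statement is a careful accounting exercise, and the only delicate point is to split the weights into the uniform ``total-dimension'' coefficient $C_0$ and the two biased corrections $mk$, $m_{\alpha'} k$ in such a way that the former dominates the latter over the entire feasible region.
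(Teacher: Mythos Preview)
Your proof is correct and takes essentially the same approach as the paper. Both arguments rest on the same decomposition of the weights into a dominant term $C_0 = m(2m_{\alpha'}+1)^2$ multiplying the total dimension plus two correction terms $mk\dim X_{\alpha'}$ and $m_{\alpha'}k\dim Y$, and then exploit $k = 2m_{\alpha'}+1 > 2m_{\alpha'}$ to make the dominant term win; the only cosmetic difference is that the paper compares an optimal $(X,Y)$ directly against an arbitrary vanishing subspace $(X',Y')$ and deduces $\dim X + \dim Y > \dim X' + \dim Y' - 1$, whereas you compare both against the fixed threshold $C_0 m$.
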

\begin{proof}
	It suffices to prove (1). 
	Let $(X,Y)$ be an optimal solution of WMVSP, and 
	let $(X',Y')$ be a vanishing subspace.
	Then, letting $M := 2m_{\alpha'} + 1$, we have
	\begin{eqnarray*}
   && m M^2 \dim X + m M \dim X_{\alpha} + M ( m M + m_{\alpha'}) \dim Y \\
   && \geq m M^2 \dim X' + m M \dim X'_{\alpha} + M ( m M + m_{\alpha'}) \dim Y'. 
	\end{eqnarray*}
	From this, we have
	\begin{eqnarray*}
	\dim X + \dim Y &\geq& \dim X' + \dim Y' + \frac{\dim X'_{\alpha} - \dim X_{\alpha}}{M} 
	+ \frac{m_{\alpha'}(\dim Y' - \dim Y)}{m M} \\
	&  \geq & \dim X' + \dim Y' - \frac{2 m_{\alpha'}}{2 m_{\alpha'}+1} >  \dim X' + \dim Y' - 1, 
	\end{eqnarray*}
	where we use $\dim Y' \leq n = m$.
	This implies that $\dim X + \dim Y \geq \dim X' + \dim Y'$. 
	Thus $(X,Y)$ is an mv-subspace.
\end{proof}

\begin{Thm}\label{thm:q-DM_irreducible}
	Suppose that $A$ is DM-regular.
	The following conditions are equivalent:
	\begin{itemize}
		\item[{\rm (1)}] $A$ is quasi DM-irreducible.
		\item[{\rm (2)}] There is no extremal nontrivial mv-subspace. 
		\item[{\rm (3)}] For each $\alpha' \in \{1,2,\ldots,\mu\}$, 
	the trivial mv-subspaces are optimal to WMVSP with weights~$(\ref{eqn:CD1})$,  
	and, for each $\beta' \in \{1,2,\ldots,\nu\}$, the trivial mv-subspaces are optimal to 
	WMVSP with weights~$(\ref{eqn:CD2})$. 
	\end{itemize}
\end{Thm}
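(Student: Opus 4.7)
The plan is to handle $(1)\Leftrightarrow(3)$ and $(1)\Rightarrow(2)$ by direct computation with the specially chosen weights, and then to prove the nontrivial direction $(2)\Rightarrow(1)$ by introducing a signature polytope and exploiting the modularity of ${\cal S}$ at its vertices. For $(1)\Leftrightarrow(3)$, I plug weights~(\ref{eqn:CD1}) into the WMVSP objective, use DM-regularity ($n=m$) to verify that both trivial mv-subspaces yield the common value $V := m^2M^2 + mMm_{\alpha'}$ (with $M := 2m_{\alpha'}+1$), and use $\dim X + \dim Y = m$ to collapse $f(X,Y)-V$ for any mv-subspace to $Mmm_{\alpha'}\bigl(\dim X_{\alpha'}/m_{\alpha'} - \dim X/m\bigr)$. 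Thus the trivial mv-subspaces are optimal if and only if $\dim X_{\alpha'}/m_{\alpha'} \leq \dim X/m$ for every mv-subspace, and since the $m_{\alpha'}$-weighted average of these ratios equals $\dim X/m$, requiring the inequality for all $\alpha'$ forces equality everywhere. The analogous calculation with~(\ref{eqn:CD2}) yields the dual statement for the $Y$-side, and the two constancy conditions are precisely~(\ref{eqn:quasi}) with $k/\ell := \dim X/m$. For $(1)\Rightarrow(2)$, I observe that under quasi DM-irreducibility every nontrivial signature is a proper convex combination of the two trivial signatures, so the WMVSP value at any nontrivial mv-subspace under any weights $(C,D)$ equals the corresponding convex combination of $V_1 := \sum_\alpha C_\alpha m_\alpha$ and $V_2 := \sum_\beta D_\beta n_\beta$, hence at most $\max(V_1,V_2)$; a trivial mv-subspace therefore always attains the optimum, precluding any nontrivial extremal.

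The main obstacle is $(2)\Rightarrow(1)$, which I prove by contrapositive. Consider the signature polytope
\[
P := \Conv\{ \sigma(X,Y) \mid (X,Y) \in {\cal S} \} \subseteq \RR^{\mu+\nu}, \qquad \sigma(X,Y) := (\dim X_\alpha,\, \dim Y_\beta)_{\alpha,\beta}.
\]
If (1) fails, some nontrivial signature lies off the segment joining the two trivial signatures, so $P$ admits a vertex $\sigma^*$ that is neither trivial. The key structural observation, where modularity of ${\cal S}$ is genuinely used, is that every vertex of $P$ is attained by a unique mv-subspace: if distinct $(X_1,Y_1),(X_2,Y_2)\in{\cal S}$ shared a signature, then the dimension identity~(\ref{eqn:dim}) applied componentwise gives
\[
\sigma(X_1\cap X_2,\, Y_1 + Y_2) + \sigma(X_1 + X_2,\, Y_1\cap Y_2) = 2\sigma(X_1,Y_1),
\]
and the two summand signatures must be distinct (coordinatewise equality there would force $X_1 = X_2$ and $Y_1 = Y_2$), placing $\sigma(X_1,Y_1)$ as the midpoint of two distinct points of $P$ and so excluding it from the vertex set.

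To conclude, let $(X^*,Y^*)$ be the unique mv-subspace realizing the nontrivial vertex $\sigma^*$, and pick a linear functional $w \in \RR^{\mu+\nu}$ whose restriction to $P$ is uniquely maximized at $\sigma^*$. Since every point of $P$ satisfies $\sum_\alpha \dim X_\alpha + \sum_\beta \dim Y_\beta = m$, replacing $w$ by $w + K\mathbf{1}$ for sufficiently large $K>0$ preserves the unique maximizer on $P$, makes the weights nonnegative, and, because any non-mv vanishing subspace has total dimension at most $m-1$, strictly dominates all non-mv vanishing subspaces. Under these weights $(X^*,Y^*)$ is the unique optimum of WMVSP, exhibiting a nontrivial extremal mv-subspace and contradicting (2).
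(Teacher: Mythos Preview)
Your proof is correct. The calculations for $(1)\Leftrightarrow(3)$ and $(1)\Rightarrow(2)$ agree with the paper's (the paper organizes them as the cycle $(1)\Rightarrow(2)\Rightarrow(3)\Rightarrow(1)$, with its $(3)\Rightarrow(1)$ being your computation run in contrapositive form). One small point: in your $(1)\Leftrightarrow(3)$ you compare only mv-subspaces against the trivial value $V$; this is legitimate but implicitly uses Lemma~\ref{lem:CD}, which guarantees that under weights~(\ref{eqn:CD1}) and~(\ref{eqn:CD2}) every WMVSP optimum is already an mv-subspace.

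Where you genuinely depart from the paper is in $(2)\Rightarrow(1)$. The paper routes this through $(2)\Rightarrow(3)$, asserting rather tersely that if some nontrivial mv-subspace strictly beats the trivial ones under weights~(\ref{eqn:CD1}), then an \emph{extremal} nontrivial mv-subspace exists. Your signature-polytope argument is a direct and fully justified proof of $\neg(1)\Rightarrow\neg(2)$: the observation that modularity of ${\cal S}$ forces each vertex of $P$ to be realized by a \emph{unique} mv-subspace is precisely what is needed to upgrade ``optimal for some weights'' to ``uniquely optimal for some weights,'' and it fills in cleanly what the paper leaves implicit. This route is more conceptual and explains why ``extremal'' is the right notion. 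The paper's approach, by contrast, is constructive: it pins down the specific polynomial-size weights~(\ref{eqn:CD1}) and~(\ref{eqn:CD2}) that the algorithm \textbf{q-DM$_{\rm reg}$} actually tries, whereas your separating functional $w+K\mathbf{1}$ carries no a~priori size bound.
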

\begin{proof}
	(1) $\Rightarrow$ (2). Let $(X,Y)$ be 
	a nontrivial mv-subspace (of dimension $n = m$). 
	Then there are positive integers $k, \ell$ satisfying (\ref{eqn:quasi}).
	For any weights $C_{\alpha},D_{\beta}$, we have
	\[
	\sum_{\alpha} C_{\alpha} \dim X_{\alpha} + \sum_{\beta} D_{\beta} \dim Y_{\beta}
	=  \frac{\ell}{k} \sum_{\alpha} C_{\alpha} m_{\alpha} + \frac{k- \ell}{k}\sum_{\beta} D_{\beta} n_{\beta}.
	\]
	Here $\sum_{\alpha} C_{\alpha} m_{\alpha}$ and $\sum_{\beta} D_{\beta} n_{\beta}$ are 
	the weights of two trivial mv-subspaces.
	This means that $(X,Y)$ is never a unique optimal solution of WMVSP.
	
	(2) $\Rightarrow$ (3). 
	Let $(X,Y)$ be an optimal solution of WMVSP under weights $(\ref{eqn:CD1})$. 
	By Lemma~\ref{lem:CD}, the space $(X,Y)$ is an mv-subspace.
	If $(X,Y)$ has the weight greater than the weight of the trivial mv-subspace, then $(X,Y)$ is nontrivial, and this implies the existence of an extremal mv-subspace other than the trivial ones. 
	
	(3) $\Rightarrow$ (1). 
Suppose that $A$ is not quasi DM-irreducible.
There is a nontrivial mv-subspace $(X,Y)$ such that one of the following holds:
\begin{itemize}
\item[(i)] $\dim X_{\alpha}/m_{\alpha} \neq \dim X_{\alpha'}/m_{\alpha'}$ for some $\alpha,\alpha'$.
\item[(ii)]  $\dim Y_{\beta}/n_{\beta} \neq \dim Y_{\beta'}/n_{\beta'}$ for some $\beta,\beta'$.
\item[(iii)] $\dim X_{\alpha}/m_{\alpha} \neq (n_{\beta} - \dim Y_{\beta})/n_{\beta}$ for some $\alpha,\beta$.
\end{itemize}
We may assume that (i) or (ii) holds.
Indeed, suppose that (iii) holds and both (i) and (ii) do not hold.
There are some positive integers $k,\ell,k',\ell'$ with 
$k/\ell \neq k'/\ell'$ such that  $\dim X_{\alpha}/m_{\alpha} = k/{\ell}$ and $\dim Y_{\beta}/n_{\beta} = (\ell' - k')/\ell'$ hold for all $\alpha, \beta$. Thus we have 
\[
\sum_{\alpha} \dim X_{\alpha} + \sum_{\beta} \dim Y_{\beta} = \frac{k}{\ell} \sum_{\alpha} m_{\alpha} +  \frac{\ell' - k'}{\ell'} \sum_{\beta} n_{\beta} = \left( \frac{k}{\ell} + \frac{\ell' - k'}{\ell'} \right) n \neq n.
\]
This is a contradiction since the maximum vanishing dimension is $n$.

We may assume that (i) holds.
Let $\kappa_{\alpha} := \dim X_{\alpha}/m_{\alpha}$ and 
$\kappa := \dim X/m = (n- \dim Y)/n$.
Let $\alpha'$ denote an index $\alpha$ having 
the maximum $\kappa_{\alpha}$.
Then we have
\[
\kappa = \frac{\sum_{\alpha} \kappa_\alpha m_{\alpha}}{m} < \frac{\kappa_{\alpha'} \sum_{\alpha} m_{\alpha}}{m} = \kappa_{\alpha'}.
\]
Consider the optimal value of WMVSP with weights (\ref{eqn:CD1}) 
for index $\alpha'$, which is given by
\begin{eqnarray*}
&& Mm(M \dim X + \dim X_{\alpha'}) + M(m M + m_{\alpha'}) \dim Y \\
&& = M^2m (\dim X + \dim Y) + M m \dim X_{\alpha'} + M m_{\alpha'} \dim Y \\
&& = M^2 m^2 + M (m m_{\alpha'} \kappa_{\alpha'} + m_{\alpha'} n (1 - \kappa)) \\
&& = M^2 m^2 + M m m_{\alpha'} (\kappa_{\alpha'} - \kappa+ 1) > M^2 m^2 + M m m_{\alpha'}, 
\end{eqnarray*}
where we let $M := 2 m_{\alpha'} +1$, and we use $n=m$ and $\kappa_{\alpha'} > \kappa$.
Here $M^2 m^2 + Mmm_{\alpha'}$ is the weight of the trivial ones.
In particular, the trivial vanishing spaces are not optimal.	
\end{proof}
Now we are ready to describe an algorithm to obtain a quasi DM-decomposition,
The algorithm outputs a chain of mv-subspaces corresponding to a quasi DM-decomposition,
 which we call a {\em q-DM chain}.
\begin{description}
	\item[Algorithm: q-DM]
	\item [Input:] A partitioned matrix $A$.
	\item [Output:] A q-DM chain ${\cal C}$ of mv-subspaces for $A$.
	\item [1:] Solve WMVSP for $A$ under weights (\ref{eqn:maximal}) 
	to obtain the maximal mv-subspace $(X_{\max},Y_{\min})$.
	\item[2:] Solve WMVSP for $A$ under weights (\ref{eqn:minimal}) 
	to obtain the minimal mv-subspace $(X_{\min},Y_{\max})$.
	\item [3:] Let $A \leftarrow (A^{X_{\max},Y_{\min}^c})^{X^c_{\min}, Y_{\max}}$, which is DM-regular.
	\item [4:] Call q-DM$_{\rm reg}$ for input $A$ to obtain a q-DM chain $\{ (X^k,Y^k) \}_{k}$ for $A$, 
where each $X^k$ (resp. $Y^k$) is viewed as a subspace of a complement of $X_{\min}$ (resp. $Y_{\min}$)
	\item [5:] Return ${\cal C} := \{ (X^k + X_{\min},Y^k + Y_{\min}) \}_{k}$. 
\end{description}

\begin{description}
	\item[Algorithm: q-DM$_{\rm reg}$]
	\item[Input:] A DM-regular partitioned matrix $A$.
	\item[Output:] A q-DM chain ${\cal C}$ of mv-subspaces for $A$.
	\item[1:] For each $\alpha' \in \{1,2,\ldots,\mu\}$, 
	solve WMVSP for weights (\ref{eqn:CD1}), and  
for each $\beta' \in  \{1,2,\ldots,\nu\}$, solve WMVSP for weights~(\ref{eqn:CD2}).
\item [2:] If we find an optimal solution $(X,Y)$ of WMVSP having the weight greater than  that of trivial mv-subspaces, then do the following:
\begin{description}
\item[2.1:]  Call q-DM$_{\rm reg}$ for input $A^{X^c,Y}$ 
to obtain a q-DM chain $\{ (Z^{k}, Y^{k})\}_k$ of $A^{X^c,Y}$.
\item[2.2:]  Call q-DM$_{\rm reg}$ for input $A^{X,Y^c}$ 
to obtain a q-DM chain $\{ (X^{\ell}, W^{\ell})\}_\ell$ of $A^{X,Y^c}$.
\item[2.3:] Return ${\cal C} :=  \{ (Z^k + X, Y^k) \}_k 
\cup \{ (X^\ell, W^\ell + Y) \}_\ell$.
\end{description}
\item[3:] Otherwise, $A$ is quasi DM-irreducible. Return two trivial mv-subspaces.
\end{description}
The correctness of this algorithm follows 
from Lemmas~\ref{lem:recursive}, \ref{lem:minimal_maximal}, \ref{lem:CD} and  Theorem~\ref{thm:q-DM_irreducible}.
The algorithm solves WMVSP polynomially many times.
Since weights $C_{\alpha},D_{\beta}$ are always bounded by a polynomial of $n,m$, 
by Theorem~\ref{thm:main'}, WMVSP can be solved in polynomial time.
Consequently, the whole algorithm runs in polynomial time.
This proves Theorem~\ref{thm:q-DM}.

\section*{Acknowledgments}
We thank Kazuo Murota, Satoru Iwata, Satoru Fujishige, and Yuni Iwamasa 
for helpful comments.
The work was partially supported by JSPS KAKENHI Grant Numbers 25280004, 26330023, 26280004, 17K00029.

\end{document}